\documentclass[11pt]{amsart}
\usepackage{latexsym,amssymb,amsfonts,amsmath,graphicx,fullpage,url,subfigure}
\usepackage[vcentermath,enableskew]{youngtab}
\usepackage[all]{xy}
\usepackage{multirow,bigcircle,verbatim}
\usepackage{tikz}
\usepackage{xspace}
\def\supertiny{ \font\supertinyfont = cmr10 at 2pt \relax \supertinyfont}
\newtheorem{theorem}{Theorem}

\newtheorem{example}[theorem]{Example}
\newtheorem{definition}[theorem]{Definition}

\newtheorem{proposition}[theorem]{Proposition}

\newtheorem{lemma}[theorem]{Lemma}
\newtheorem{corollary}[theorem]{Corollary}

\newtheorem{conjecture}[theorem]{Conjecture}
\numberwithin{equation}{section}
\numberwithin{theorem}{section}

\DeclareMathOperator*{\pro}{Pro}
\DeclareMathOperator*{\row}{Row}
\DeclareMathOperator*{\spro}{SPro}
\DeclareMathOperator*{\aposet}{\mathbf{A}}
\DeclareMathOperator*{\tposet}{\mathbf{T}}
\newcommand{\rcposet}{rc-poset}

\newcounter{x}
\newcounter{y}
\newcounter{z}

\newcommand\xaxis{210}
\newcommand\yaxis{-30}
\newcommand\zaxis{90}

\newcommand\topside[3]{
  \fill[fill=white, draw=black,shift={(\xaxis:#1)},shift={(\yaxis:#2)},
  shift={(\zaxis:#3)}] (0,0) -- (30:1) -- (0,1) --(150:1)--(0,0);
}

\newcommand\leftside[3]{
  \fill[fill=gray, draw=black,shift={(\xaxis:#1)},shift={(\yaxis:#2)},
  shift={(\zaxis:#3)}] (0,0) -- (0,-1) -- (210:1) --(150:1)--(0,0);
}

\newcommand\rightside[3]{
  \fill[fill=black, draw=black,shift={(\xaxis:#1)},shift={(\yaxis:#2)},
  shift={(\zaxis:#3)}] (0,0) -- (30:1) -- (-30:1) --(0,-1)--(0,0);
}

\newcommand\cube[3]{
  \topside{#1}{#2}{#3} \leftside{#1}{#2}{#3} \rightside{#1}{#2}{#3}
}

\newcommand\planepartition[1]{
 \setcounter{x}{-1}
  \foreach \a in {#1} {
    \addtocounter{x}{1}
    \setcounter{y}{-1}
    \foreach \b in \a {
      \addtocounter{y}{1}
      \setcounter{z}{-1}
      \foreach \c in {1,...,\b} {
        \addtocounter{z}{1}
        \cube{\value{x}}{\value{y}}{\value{z}}
      }
    }
  }
}

\begin{document}

\title{Promotion and Rowmotion}

\author[J.\ Striker]{Jessica Striker}
\email{jessica@math.umn.edu}

\author[N.\ Williams]{Nathan Williams}
\email{will3089@math.umn.edu}

\begin{abstract}
We present an equivariant bijection between two actions---promotion and rowmotion---on order ideals in certain posets.  This bijection simultaneously generalizes a result of R.\ Stanley concerning promotion on the linear extensions of two disjoint chains and certain cases of recent work of D.\ Armstrong, C.\ Stump, and H.\ Thomas on noncrossing and nonnesting partitions.  We apply this bijection to several classes of posets, obtaining equivariant bijections to various known objects under rotation.  We extend the same idea to give an equivariant bijection between alternating sign matrices under rowmotion and under B.\ Wieland's gyration.  Finally, we define two actions with related orders on alternating sign matrices and totally symmetric self-complementary plane partitions.
\end{abstract}

\maketitle

\section{Introduction}
\label{sec:introduction}

In this paper, we relate M.-P.\ Sch\"{u}tzenberger's action promotion ($\pro$) and an action that has been rediscovered and renamed several times---it has, at various points, been called $F$~\cite{brouwerschrijver}, $f$~\cite{deza1990loops, fon1993orbits, cameron1995orbits}, $\psi$~\cite{stanley2009promotion}, $\mathfrak{X}$~\cite{panyushev2009orbits}, the Panyushev action and complement~\cite{bessis701792cyclic, Armstrong2011}, and even the Fon-der-Flaass action~\cite{RushShiREU}.  Because we will interpret this action as acting on rows of certain posets, we call it rowmotion ($\row$).

\begin{definition} \rm
\label{def:row}
	Let $\mathcal{P}$ be a poset, and let $I \in J(\mathcal{P})$.  Then $\row(I)$ is the order ideal generated by the minimal elements of $\mathcal{P}$ not in $I$.
\end{definition}

The motivation for relating promotion and rowmotion comes from the following two results.  In his 2009 survey paper ``Promotion and Evacuation''~\cite{stanley2009promotion}, R.\ Stanley gave an equivariant bijection between linear extensions of two disjoint chains $[n] \oplus [k]$ under $\pro$ and order ideals of the product of two chains $[n] \times [k]$ under $\row$.

In 2011, D.\ Armstrong, C.\ Stump, and H.\ Thomas then gave a beautiful uniformly-characterized equivariant bijection between noncrossing partitions under Kreweras complementation and nonnesting partitions under rowmotion~\cite{Armstrong2011}.  Restricting to type $A$---losing both uniformity and the full generality of their result---we may interpret their equivariant bijection as passing from linear extensions of $[2] \times [n]$ under $\pro$ to order ideals of the type $A$ positive root poset $\Phi^+(A_n)$ under $\row$.

We give a new proof of these two equivariant bijections between linear extensions and order ideals by simultaneously generalizing them as a single theorem about \rcposet{}s---certain posets whose elements and covering relations fit into \textbf{r}ows and \textbf{c}olumns.  This theorem gives an equivariant bijection between the order ideals of an \rcposet{} $\mathcal{R}$ under $\pro$ and $\row$ by interpreting promotion as an action on the \emph{columns} of order ideals of $\mathcal{R}$ and rowmotion as an action on the \emph{rows}.

Armed with promotion, we obtain simple equivariant bijections from the order ideals of $[n]\times[k]$, $J([2]\times[n])$, positive root posets of classical type, and $[2]\times[m]\times[n]$ under rowmotion to various known objects under rotation.

Finally,
we apply this theory to alternating sign matrices (ASMs) and totally symmetric self-complementary plane partitions (TSSCPPs). We interpret B.\ Wieland's gyration action on ASMs in terms of rowmotion on the ASM poset.
We also define two actions with related orders on ASMs and TSSCPPs and speculate on the application of these actions to the open problem of finding an explicit bijection between these two sets of objects.

The remainder of the paper is structured as follows.  In Section~\ref{sec:definitions}, we review basic notions about posets, define promotion and rowmotion, and recall the cyclic sieving phenomenon.  We briefly summarize the history of rowmotion in Section~\ref{sec:history}, and build a framework for our results in Sections~\ref{sec:togglegroup} and~\ref{sec:rcposets} by recalling P.\ Cameron and D.\ Fon-der-Flaass's permutation group on order ideals of a poset~\cite{cameron1995orbits}---which we call the toggle group---and by defining \rcposet{}s.  We characterize promotion and rowmotion in terms of the toggle group of an \rcposet{} in Section~\ref{sec:prorowtoggle}, which allows us to give an equivariant bijection between promotion and rowmotion for \rcposet{}s in Section~\ref{sec:conjugate}.  We apply this equivariant bijection in Section~\ref{sec:othertypes} to the product of two chains and the types $A$ and $B$ positive root posets, thereby recovering the corresponding results in~\cite{stanley2009promotion} and~\cite{Armstrong2011}.  In Section~\ref{sec:threedim} we consider the type $D$ positive root poset and plane partitions.  Finally, we apply this perspective to ASMs and TSSCPPs in Section~\ref{sec:asmtsscpp}. 

\section{Definitions}
\label{sec:definitions}

\subsection{Poset Terminology}

Recall that a \emph{poset} $\mathcal{P}$ is a set with a binary relation ``$\leq$'' that is
reflexive, antisymmetric, and transitive.

\begin{definition} \rm
	An \emph{order ideal} of a poset $\mathcal{P}$ is a set $I \subseteq \mathcal{P}$ such that if $p \in I$ and $p' \leq p$, then $p' \in I$.  We write $J(\mathcal{P})$ for the set of all order ideals of $\mathcal{P}$.
\end{definition}

Recall that $J(\mathcal{P})$ forms a distributive lattice under inclusion.

\begin{definition} \rm
	A \emph{partition} $\lambda = (\lambda_1, \lambda_2, \ldots, \lambda_k)$ is a finite sequence of weakly decreasing positive integers.  
\end{definition}
 
Using English notation, we think of the boxes in a Ferrers diagram of a partition as the elements of a poset, where $x < y$ if the box $x$ is weakly to the left and above the box $y$.  For example, in \tiny $\young(abc,de)$ \normalsize, we have $b<c$ and $b<e$, but $c \not < e$ and $e \not < c$.  For $\mu \subseteq \lambda$, a skew Ferrers diagram $\lambda / \mu$ consists of the boxes in the Ferrers diagram of $\lambda$ that are not in $\mu$.

\begin{definition} \rm
	Let $\mathcal{P}$ have $n$ elements and let $[n]=\{1,2,\ldots,n\}$.  A \emph{linear extension} of $\mathcal{P}$ is a bijection $\mathcal{L}: \mathcal{P} \to [n]$ such that if $p < p'$, then $\mathcal{L}(p) < \mathcal{L}(p').$  We call linear extensions of a skew Ferrers diagram \emph{Standard Young Tableaux (SYT)}.  We write $\mathcal{L}(\mathcal{P})$ for the set of all linear extensions of $\mathcal{P}$.
\end{definition}

\subsection{Promotion}
In 1972, M.-P.\ Sch\"{u}tzenberger defined promotion as an action on linear extensions~\cite{schutzenberger1972promotion}.  We will denote promotion by $\pro$.

\begin{definition} \rm
\label{def:pro}
	Let $\mathcal{L}$ be a linear extension of a poset $\mathcal{P}$ and let $\rho_i$ act on $\mathcal{L}$ by switching $i$ and $i+1$ if they are not the labels of two elements with a covering relation.  We define the \emph{promotion} of $\mathcal{L}$ to be $\pro(\mathcal{L}) = \rho_{n-1} \rho_{n-2} \cdots \rho_1 (\mathcal{L})$.
\end{definition}

Note that promotion can also be defined using jeu-de-taquin, though we will not use this equivalent definition here.  Since each step of promotion can be reversed, $\pro$ is a bijection on SYT of a specified shape.

\subsection{Rowmotion}

In 1973, P.\ Duchet defined an action on hypergraphs~\cite{duchet1974hypergraphes}.  This action was generalized by A.\ Brouwer and A.\ Schrijver to an arbitrary poset in~\cite{brouwerschrijver}.  Because we will interpret the action as acting on rows, we will call it \emph{rowmotion}.  We will denote rowmotion by $\row$.


{
\renewcommand{\thetheorem}{\ref{def:row}}
\begin{definition}
	Let $\mathcal{P}$ be a poset, and let $I \in J(\mathcal{P})$.  Then $\row(I)$ is the order ideal generated by the minimal elements of $\mathcal{P}$ not in $I$.
\end{definition}
\addtocounter{theorem}{-1}
}

As explained in~\cite{deza1990loops}, one motivation for this definition was to study the orbits of the data defining a matroid.  For example, working within a Boolean algebra, applying $\row$ to the order ideal generated by the bases of a matroid gives the order ideal generated by the circuits. For more on the history of rowmotion, see Section~\ref{sec:history}.

\subsection{The Cyclic Sieving Phenomenon}
\label{sec:csp}

The Cyclic Sieving Phenomenon was introduced by V.~Reiner, D.\ Stanton, and D.\ White as a generalization of J.\ Stembridge's $q=-1$ phenomenon~\cite{reiner2004cyclic}.

\begin{definition} [V.\ Reiner, D.\ Stanton, D.\ White] \rm
	Let $X$ be a finite set, $X(q)$ a generating function for $X$, and $C_n$ the cyclic group of order $n$ acting on $X$.  Then the triple $(X, X(q), C_n)$ exhibits the \emph{Cyclic Sieving Phenomenon (CSP)} if for $c \in C_n,$ 
	$$X(\omega(c)) = \left| \{ x \in X : c(x) = x \} \right|,$$
where $\omega: C_n \to \mathbb{C}$ is an isomorphism of $C_n$ with the $n$th roots of unity.
\end{definition}

As an example, we have the following theorem.  We use the notation $\binom{[n]}{k}$ for the subsets of $[n]$ of size $k$, and the $q$-analogues $[n]_q = \frac{1-q^n}{1-q}$, $[n]!_q = \prod_{i=1}^n [i]_q$, and $\binom{n}{k}_q = \frac{[n]!_q}{[k]!_q [n-k]!_q}$.

\begin{theorem}[V.\ Reiner, D.\ Stanton, D.\ White]
\label{thm:denni}
	Let $C_n$ act on $\binom{[n]}{k}$ by the cycle $(1,2,\ldots,n)$.  Then $(\binom{[n]}{k},\binom{n}{k}_q,C_n)$ exhibits the CSP.
\end{theorem}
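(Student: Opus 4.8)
The plan is to verify the CSP identity separately for each element $c^a$ of $C_n$, where $c$ denotes the generator corresponding to the cycle $(1,2,\ldots,n)$ and $\omega$ is normalized so that $\omega(c) = \zeta := e^{2\pi i/n}$. Writing $m = \gcd(n,a)$ and $d = n/m$, the value $\omega(c^a) = \zeta^a$ is a primitive $d$th root of unity. Thus the theorem reduces to showing, for every $a$, that
$$\binom{n}{k}_q\Big|_{q=\zeta^a} = \#\left\{S \in \binom{[n]}{k} : c^a(S) = S\right\}.$$

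First I would compute the right-hand side combinatorially. As a permutation of $[n]$, $c^a$ has $m = \gcd(n,a)$ cycles, each of length $d = n/m$. A $k$-subset $S$ is fixed by $c^a$ precisely when it is a union of these cycles, and any such union has size a multiple of $d$. Hence the number of fixed points is $\binom{m}{k/d}$ when $d \mid k$, and $0$ otherwise.

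Next I would evaluate the left-hand side using the behavior of $q$-binomial coefficients at roots of unity. The cleanest route is the $q$-Lucas theorem: if $\zeta^a$ is a primitive $d$th root of unity and we write $n = n_1 d + n_0$ and $k = k_1 d + k_0$ with $0 \le n_0, k_0 < d$, then $\binom{n}{k}_{\zeta^a} = \binom{n_1}{k_1}\binom{n_0}{k_0}_{\zeta^a}$. In our situation $n = md$ forces $n_0 = 0$ and $n_1 = m$, so this collapses to $\binom{m}{k_1}$ times $\binom{0}{k_0}_{\zeta^a}$, and the latter equals $1$ if $k_0 = 0$ and $0$ otherwise. This exactly reproduces the fixed-point count of the previous step, matching the two sides. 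If I preferred not to cite $q$-Lucas, I would instead derive the needed evaluation from the $q$-binomial theorem $\prod_{i=0}^{n-1}(1 + q^i x) = \sum_k q^{\binom{k}{2}}\binom{n}{k}_q x^k$: at $q = \zeta^a$ the factors repeat with period $d$, and grouping them via $\prod_{\eta^d = 1}(1 + \eta x) = 1 - (-1)^d x^d$ turns the product into $(1 - (-1)^d x^d)^m$, whose $x^k$-coefficient vanishes unless $d \mid k$ and otherwise produces $\binom{m}{k/d}$.

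The main obstacle is precisely this root-of-unity evaluation; the combinatorial fixed-point count is routine, and once both sides are in hand they agree immediately. If one derives the evaluation from the generating function rather than quoting $q$-Lucas, the one genuinely delicate point is bookkeeping the prefactor $q^{\binom{k}{2}} = \zeta^{a\binom{k}{2}}$ together with the sign $(-1)^d$ when extracting the bare coefficient $\binom{n}{k}_q$, which requires checking that these roots-of-unity factors cancel to leave exactly $\binom{m}{k/d}$.
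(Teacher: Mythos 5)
The paper states this theorem without proof, citing Reiner--Stanton--White, so there is no internal argument to compare against; it is an imported result. Your proposal is correct and is essentially the standard proof from that reference: the fixed-point count $\binom{m}{k/d}$ (with $m=\gcd(n,a)$, $d=n/m$, and zero fixed points unless $d\mid k$) is right, and the evaluation of $\binom{n}{k}_q$ at a primitive $d$th root of unity via $q$-Lucas --- or equivalently via the product $\prod_{i=0}^{n-1}(1+q^ix)$ collapsing to $\left(1-(-1)^dx^d\right)^m$ --- yields the same quantity. The prefactor $\zeta^{a\binom{k}{2}}$ and the sign $(-1)^{(d+1)k/d}$ that you flag as the delicate point do cancel exactly when $d\mid k$ (the only nontrivial case being $d$ even and $k/d$ odd, where both contribute $-1$), so the argument goes through.
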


In general, both rowmotion and promotion have orders that are hard to predict.

\begin{example}\rm
	Promotion has order $7,554,844,752$ on SYT of shape $(8,6)$.
\end{example}

M.\ Haiman and D.\ Kim classified those SYT with $n$ boxes on which promotion has order $n$ or $2n$---the generalized staircases, which include rectangles, staircases, and double staircases~\cite{haiman1992characterization}.  In his thesis, B.\ Rhoades proved a cyclic sieving phenomenon for rectangular SYT under promotion, thereby determining the orbit structure~\cite{rhoades2010cyclic}.  Similar results for other shapes are limited.

\begin{theorem} [B.\ Rhoades]
	Let $\lambda=(n,n,\ldots, n)$ be a rectangular partition of $n\cdot m$, and let $SYT(\lambda)$ be the set of SYT of shape $\lambda$.  Let $C_n$ act on $SYT(\lambda)$ by promotion and let $$f^\lambda(q) = \frac{[n \cdot m]!_q}{\prod_{i,j} [h_{i,j}]_q}$$ be the $q$-analogue of the hook-length formula for $SYT(\lambda)$.  Then $(SYT(\lambda), f^\lambda(q), C_{n\cdot m})$ exhibits the CSP.
\end{theorem}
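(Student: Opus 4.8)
The plan is to prove this statement via representation theory, following the strategy of realizing promotion as the action of a long cycle on a cellular module and then invoking Springer's theory of regular elements. Throughout, write $N = n \cdot m$ for the number of boxes, and note that by the result of Haiman and Kim cited above, $\pro$ has order exactly $N$ on $SYT(\lambda)$, so the relevant cyclic group is $C_N = C_{n \cdot m}$.

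First I would reinterpret the polynomial $f^\lambda(q)$. By the $q$-analogue of the hook-length formula, $f^\lambda(q)$ agrees, up to a power of $q$, with the \emph{fake degree} of the irreducible symmetric group module $V^\lambda$—the graded multiplicity of $V^\lambda$ in the coinvariant algebra of $S_N$. This reinterpretation is the bridge between the combinatorial generating function and character values, since a CSP is most naturally verified by exhibiting the fixed-point counts of a cyclic action as traces on a representation whose graded character is $f^\lambda(q)$.

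The central step—and the main obstacle—is to show that $\pro$ on $SYT(\lambda)$ is realized algebraically as the action of the long cycle $c = (1,2,\ldots,N) \in S_N$ on the Kazhdan--Lusztig cellular representation $V^\lambda$, whose canonical basis is indexed by $SYT(\lambda)$. Concretely, one shows that in the KL basis the operator $c$ acts as a signed permutation matrix permuting the basis vectors exactly according to promotion. The delicate point is controlling the signs: for a general shape these obstruct the identification, whereas for a rectangle $\lambda = (n^m)$ the relevant signs are uniform, which is precisely why rectangularity is the hypothesis. This identification rests on the compatibility of promotion with jeu-de-taquin and evacuation and on the Kazhdan--Lusztig description of how the Coxeter generators act on the canonical basis.

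Finally I would invoke Springer's theory of regular elements together with its root-of-unity refinement. The long cycle $c$ is a regular element of order $N$ in $S_N$, so the trace of $c^d$ on $V^\lambda$ equals the evaluation of $f^\lambda(q)$ at $q = \omega^d$, where $\omega$ is a primitive $N$th root of unity. Combining this with the identification above, the number of $T \in SYT(\lambda)$ fixed by $\pro^d$ equals $\operatorname{tr}(c^d \mid V^\lambda) = f^\lambda(\omega^d)$. Since this holds for every $d$, the triple $(SYT(\lambda), f^\lambda(q), C_N)$ exhibits the CSP. What remains is careful bookkeeping of the power of $q$ introduced in the first step and of the sign normalization in the second, so that the two sides match exactly.
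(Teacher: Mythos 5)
The paper does not prove this statement: it is quoted as a known theorem of B.~Rhoades \cite{rhoades2010cyclic}, so there is no internal proof to compare against. Your outline is, in fact, a faithful reconstruction of Rhoades's own argument: reinterpret $f^\lambda(q)$ (up to a power of $q$) as the fake degree polynomial of the irreducible $S_N$-module $V^\lambda$, show that the long cycle $c=(1,2,\ldots,N)$ acts on the Kazhdan--Lusztig basis of $V^\lambda$ by permuting basis vectors according to promotion up to a uniform scalar (rectangularity being exactly what tames the signs), and then evaluate $\operatorname{tr}(c^d\mid V^\lambda)$ at roots of unity via Springer's theory of regular elements. So the approach is correct and is the standard one.

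That said, as a proof it is a plan rather than an argument: the entire difficulty of the theorem lives in your ``central step,'' the identification of promotion with the long cycle's action on the KL (dual canonical) basis, which you assert rather than establish; the compatibility with jeu-de-taquin and evacuation and the sign bookkeeping there occupy most of Rhoades's paper. One small correction: Haiman's result gives $\pro^{N}=\mathrm{id}$ on rectangular SYT, i.e.\ the order of promotion \emph{divides} $N$, not that it equals $N$ (consider a single row or column); this is all that is needed for $C_N$ to act, and the CSP statement is unaffected.
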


\section{History}
\label{sec:history}

In this section, we recall known results for rowmotion acting on $[n]\times [k]$ and positive root posets.  We phrase these results as equivariant bijections between linear extensions of a poset under promotion and order ideals of a related poset under rowmotion.

\subsection{Products of Two Chains}

The problem of determining the order of rowmotion on the product of two chains was proposed in a 1974 paper by A.\ Brouwer and A.\ Schrijver~\cite{brouwerschrijver}.  After showing that the order of rowmotion on a Boolean algebra failed to adhere to a conjectural pattern for $n>4$, the two proved the following theorem.

\begin{theorem}[A.\ Brouwer, A.\ Schrijver]
	$J([n]\times[k])$ under $\row$ has order $n+k$.
\end{theorem}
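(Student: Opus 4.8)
The plan is to realize $\row$ on $J([n]\times[k])$ as an honest cyclic rotation on a set of combinatorial objects, and then read off the order from the rotation. First I would encode each order ideal $I\in J([n]\times[k])$ by the monotone lattice path that separates $I$ from its complement in the $n\times k$ rectangle: recording the $n+k$ unit steps of this path in order produces a word in two letters with exactly $n$ steps of one kind and $k$ of the other, equivalently an element of $\binom{[n+k]}{n}$. This gives a bijection $\Phi\colon J([n]\times[k]) \to \binom{[n+k]}{n}$, since an order ideal of $[n]\times[k]$ is exactly a partition fitting inside the rectangle, and such a partition is determined by its boundary path.

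The heart of the argument is a lemma asserting that $\Phi$ intertwines $\row$ with the cyclic rotation $\mathrm{rot}$ that acts on $\binom{[n+k]}{n}$ by the $(n+k)$-cycle $(1,2,\ldots,n+k)$, i.e. that $\Phi\circ\row = \mathrm{rot}\circ\Phi$. To prove this I would give a direct description of $\row$ on paths: the minimal elements of $\mathcal{P}\setminus I$ are precisely the inner corners of the boundary path, and the ideal they generate has a boundary obtained by the local rearrangement of steps that, globally, amounts to cyclically shifting the step-word by one position. I expect this identification to be the main obstacle — the definition of $\row$ is phrased in terms of generating an ideal from an antichain, and turning that into the clean statement ``shift the boundary word'' requires carefully tracking how the corners of $I$ and of $\mathcal{P}\setminus I$ interleave along the diagonals of the rectangle. (This is exactly the phenomenon that the promotion/rowmotion correspondence of the later sections upgrades to a general theorem.)

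Granting the lemma, the order of $\row$ equals the order of $\mathrm{rot}$ as a permutation of $\binom{[n+k]}{n}$. Every orbit of $\mathrm{rot}$ has size equal to the cyclic period of the corresponding word, which divides $n+k$; hence the order of $\row$ divides $n+k$. For the matching lower bound it suffices to exhibit a single word of full period $n+k$: the word $1^n 0^k$ (all steps of the first kind, then all of the second) has exactly one cyclic occurrence of the factor $10$, whereas a word of proper period $p \mid (n+k)$ is a concatenation of at least two equal nonconstant blocks and so would contain at least two such occurrences. Thus $1^n 0^k$ has no period smaller than $n+k$, and its $\mathrm{rot}$-orbit has size exactly $n+k$. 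Since the order of a permutation is the least common multiple of its cycle lengths, combining the two bounds shows that $\row$ on $J([n]\times[k])$ has order exactly $n+k$.
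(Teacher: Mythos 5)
Your overall strategy --- encode order ideals of $[n]\times[k]$ as boundary words in $\binom{[n+k]}{n}$ and reduce to a rotation --- is the right one, and your final two steps (every rotation orbit has length dividing $n+k$; the word $1^n0^k$ has full cyclic period, so the order is exactly $n+k$) are correct. But the key lemma you propose, namely that the \emph{direct} boundary-path encoding $\Phi$ satisfies $\Phi\circ\row=\mathrm{rot}\circ\Phi$, is false, and this is a genuine gap rather than a technical obstacle to be overcome by careful bookkeeping. A minimal counterexample: in $J([2]\times[2])$ (the diamond poset with bottom $b$, middle $l,r$, top $t$), rowmotion sends $\emptyset\mapsto\{b\}\mapsto\{b,l,r\}\mapsto\{b,l,r,t\}\mapsto\emptyset$. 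Under any boundary-word encoding the extreme ideals $\emptyset$ and $\{b,l,r,t\}$ receive the two sorted words $0011$ and $1100$, and one checks that no assignment of the remaining words to $\{b\}$ and $\{b,l,r\}$ makes each step of this $4$-cycle a single cyclic shift: for instance $w(\emptyset)=0011$ forces $w(\{b\})\in\{0110,1001\}$, and either choice breaks down one step later. What \emph{is} true is that promotion --- the column-by-column toggle $\pro_{k\cdots21}$ of Definition~\ref{def:prorowtogg} --- rotates the boundary word (this is Theorem~\ref{prop:proback} together with Theorem~\ref{thm:square}), while rowmotion is only \emph{conjugate} to promotion in the toggle group. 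Passing from $\row$ to $\pro$ requires the conjugating element $D$ built from diagonal toggles (Theorems~\ref{thm:maintheorem} and~\ref{thm:explicit}); the composite map ``apply $D$, then read off the boundary word'' is the equivariant bijection you want, and it is emphatically not the identity on order ideals.

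For comparison: the paper does not reprove this statement of A.~Brouwer and A.~Schrijver directly, but its machinery yields exactly the corrected version of your argument. Theorem~\ref{thm:square} states the rotation result for $\row$ only after composing with the equivariant bijection from $\row$ to $\pro$, and the order $n+k$ then follows as you say. So your proposal is salvageable, but the missing ingredient --- the conjugacy of promotion and rowmotion --- is precisely the main theorem of the paper, not a routine verification about corners of lattice paths.
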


In 1992, D.\ Fon-der-Flaass used a clever combinatorial model to refine this result~\cite{fon1993orbits}.

\begin{theorem}[D.\ Fon-der-Flaass]
\label{thm:fon_n_k}
	The length of any orbit of $J([n]\times[k])$ under $\row$ is $(n+k)/d$ for some $d$ dividing both $n$ and $k$.  Any number of this form is the length of some orbit.
\end{theorem}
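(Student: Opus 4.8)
The plan is to reduce the statement to elementary combinatorics on binary necklaces by encoding order ideals as lattice paths. Identify $[n]\times[k]$ with the cells of an $n\times k$ rectangle, so that an order ideal $I$ is a Young diagram $\lambda\subseteq n\times k$ recorded by its boundary lattice path from one corner of the rectangle to the opposite corner. This path consists of $n+k$ unit steps, of which $n$ are vertical and $k$ are horizontal; reading the steps in order produces a binary word $w(I)\in\{0,1\}^{n+k}$ with exactly $n$ zeros and $k$ ones, and $I\mapsto w(I)$ is a bijection onto all such words. The crux of the argument is to prove the following equivariance: under this encoding $\row$ becomes a single cyclic shift, i.e.\ $w(\row(I))=\sigma(w(I))$, where $\sigma$ denotes rotation of a word by one position.

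This equivariance is the main obstacle, and is precisely the content of Fon-der-Flaass's clever combinatorial model. To establish it I would work directly from Definition~\ref{def:row}: the minimal elements of $[n]\times[k]\setminus I$ are exactly the inner corners of the complementary path, and the ideal they generate has a boundary path obtained from that of $I$ by a local, corner-by-corner rewriting. The task is to check that, read as a word of $N$/$E$ steps, this rewriting is exactly the rotation $\sigma$ (equivalently, one verifies the identity at the level of the Stanley--Thomas word, the binary string whose joint equivariance for rowmotion and rotation is the engine of the whole argument). Because $\sigma$ generates the cyclic group $C_{n+k}$, this immediately recovers the Brouwer--Schrijver bound, since every orbit length then divides $n+k$.

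Granting the equivariance, the theorem becomes a statement about rotation of binary necklaces. The orbit of a word $w$ has size equal to its minimal period $m$, and $m\mid (n+k)$; writing $d=(n+k)/m$, invariance under $\sigma^{m}$ forces $w=v^{d}$ for a length-$m$ block $v$, and each copy of $v$ must carry the same content, namely $n/d$ zeros and $k/d$ ones. Integrality of these counts gives $d\mid n$ and $d\mid k$, so $d\mid\gcd(n,k)$ and the orbit length is $m=(n+k)/d$, proving the first assertion. For the converse, fix any $d\mid\gcd(n,k)$; since $d\le\min(n,k)$ we have $n/d,k/d\ge 1$, so the block $v=0^{n/d}1^{k/d}$ is a single run of each letter and hence primitive (a word $0^{p}1^{q}$ with $p,q\ge 1$ is never a proper power). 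Then $w=v^{d}$ has minimal period exactly $(n+k)/d$, and its preimage under $w\mapsto I$ is an order ideal whose $\row$-orbit has the required length $(n+k)/d$. This exhibits an orbit for every admissible value and completes the proof.
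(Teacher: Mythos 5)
There is a genuine gap, and it sits exactly where you located ``the main obstacle.'' The equivariance you assert, $w(\row(I))=\sigma(w(I))$ for the boundary word of the Young diagram, is false. Take $n=k=2$ and $I=\emptyset$, with boundary word $1100$. Then $\row(\emptyset)$ is the singleton ideal generated by the minimum, whose boundary word is $1010$ --- not a cyclic shift of $1100$ by one position. What is true (and is the paper's Theorem~\ref{thm:square} together with Figure~\ref{ex:example342}) is that the boundary word intertwines \emph{promotion} with rotation: $\pro$ sweeps through the columns and moves the boundary path by one step, whereas $\row$ toggles all rows at once and changes the word globally. So the local ``corner-by-corner rewriting'' you propose to check is a verification of a false identity, and the parenthetical appeal to the Stanley--Thomas word does not rescue it: that word is a genuinely different encoding of the order ideal (built from the associated antichain, not from the boundary path), and the fact that $\row$ rotates \emph{it} is a nontrivial lemma, not an ``equivalent'' restatement of your claim.

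The repair is precisely the content the paper supplies and that your write-up omits: either prove that $\row$ and $\pro$ are conjugate in the toggle group (Theorem~\ref{thm:maintheorem}, via Lemma~\ref{lem:symconj} and the parity observation that rows and columns of the \rcposet{} $[n]\times[k]$ interleave), after which the easy fact that $\pro$ rotates the boundary word finishes the reduction since conjugation preserves orbit sizes; or else work directly with the Fon-der-Flaass/Stanley--Thomas word and prove its rotation property from scratch. Note also that the paper itself does not prove Theorem~\ref{thm:fon_n_k} --- it quotes it from Fon-der-Flaass --- so you are really being asked for the argument of~\cite{fon1993orbits} or for the paper's Section~\ref{sec:nxk} route. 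Your second half (the necklace combinatorics: minimal period $m=(n+k)/d$ forces $w=v^d$ with $n/d$ zeros and $k/d$ ones per block, hence $d\mid\gcd(n,k)$; and $(0^{n/d}1^{k/d})^d$ is primitive-to-the-$d$ and realizes every admissible $d$) is correct and complete, so once the equivariance is replaced by the conjugacy argument the proof goes through.
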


In his 2009 survey paper~\cite{stanley2009promotion}, R.\ Stanley noted that there was an equivariant bijection between promotion and rowmotion.  This completely resolved the original problem.

\begin{theorem}[R.\ Stanley]
\label{thm:prorowbinomial}
	There is an equivariant bijection between $\mathcal{L}([n] \oplus [k])$ under $\pro$ and $J([n] \times [k])$ under $\row$.
\end{theorem}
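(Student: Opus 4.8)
The plan is to realize both sides as the cyclic rotation of binary words and then invoke a standard conjugation argument. Encode a linear extension $\mathcal{L} \in \mathcal{L}([n]\oplus[k])$ by the word $w(\mathcal{L}) \in \{a,b\}^{n+k}$ whose $j$th letter records which of the two chains carries the label $j$; since the labels along each chain must increase, $\mathcal{L}\mapsto w(\mathcal{L})$ is a bijection onto the words with $n$ letters $a$ and $k$ letters $b$, and in particular $|\mathcal{L}([n]\oplus[k])| = \binom{n+k}{n} = |J([n]\times[k])|$. Abstractly, it then suffices to produce a single encoding $\psi \colon J([n]\times[k]) \to \{a,b\}^{n+k}$ of order ideals by the same words that conjugates $\row$ to the left cyclic shift $\sigma(w_1 w_2 \cdots w_{n+k}) = w_2 \cdots w_{n+k} w_1$; granting also that $\pro$ is conjugated to $\sigma$ by $w(\cdot)$, the composite $\phi := \psi^{-1}\circ w$ satisfies $\phi\circ\pro = \row\circ\phi$, since $\phi\,\pro = \psi^{-1}\sigma\,w = (\psi^{-1}\sigma\psi)\,\psi^{-1}w = \row\,\phi$.

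The first half---that promotion is conjugate to rotation---is the easy half. Unwinding Definition~\ref{def:pro}, the operator $\rho_i$ exchanges the labels $i$ and $i+1$ exactly when they do not sit on a covering pair; for a disjoint union of chains two labels cover one another iff they are consecutive on the same chain, i.e.\ iff $w_i = w_{i+1}$. Hence, read on the word $w(\mathcal{L})$, the operator $\rho_i$ transposes $w_i$ and $w_{i+1}$ precisely when they differ and is the identity otherwise. I would then check, by tracking the letter $w_1$ as the successive $\rho_i$ carry it rightward, that the composite $\pro = \rho_{n+k-1}\cdots\rho_1$ deposits $w_1$ at the end and slides every other letter one place left---that is, $w\circ\pro = \sigma\circ w$. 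One verifies this on small cases and then by a one-line induction on the position reached by the carried letter.

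The main obstacle is the second half: exhibiting coordinates $\psi$ on order ideals under which $\row$ becomes the \emph{same} rotation $\sigma$. The naive guess---encoding $I\in J([n]\times[k])$ by the staircase lattice path separating $I$ from its complement---does not work: a direct check on $[2]\times[2]$ shows that $\row$ permutes these boundary words but is not a cyclic shift of them. What is true, and is the content of D.~Fon-der-Flaass's combinatorial model underlying Theorem~\ref{thm:fon_n_k}, is that after a suitable, less obvious relabeling of the boundary data $\row$ does become a single rotation; the orbit lengths $(n+k)/d$ recorded there are exactly the orbit lengths of $\sigma$ on words with $n$ letters $a$ and $k$ letters $b$, which is strong corroboration that such a $\psi$ exists. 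Concretely, I would analyze $\row$ through its definition via the minimal elements of the complement: $\row(I)$ is generated by the outer corners of $I$, and I would track how these corners migrate, seeking a reading order of the boundary (e.g.\ along the antidiagonals of $[n]\times[k]$) in which one application of $\row$ advances every corner cyclically by one slot. Pinning down this reading order---and matching its direction and phase to the promotion rotation of the previous paragraph---is where the real work lies.

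Finally, with $\psi$ in hand the theorem follows from the conjugation identity above, and as a bonus the argument recovers the order $n+k$ of $\row$ on $J([n]\times[k])$ (the order of $\sigma$) together with the Fon-der-Flaass orbit lengths. I expect the toggle-group framework developed later in the paper to be the natural setting for constructing $\psi$ cleanly---decomposing $\row$ into commuting toggles and conjugating the resulting product into the rotation $\sigma$---thereby removing the ad hoc corner-chasing from this elementary approach.
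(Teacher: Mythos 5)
Your first half is fine and agrees with what the paper does: identifying $\mathcal{L}([n]\oplus[k])$ with binary words on which $\pro$ acts as the cyclic shift $\sigma$ is exactly the content of reading linear extensions as boundary paths (Theorem~\ref{prop:proback} together with the discussion in Section~\ref{sec:prorowtoggle}). The problem is that the second half---producing the encoding $\psi$ under which $\row$ also becomes $\sigma$---is the entire content of the theorem, and you leave it as a plan rather than a proof. Worse, the plan as stated is aimed at something that does not exist: you propose to find a ``reading order of the boundary'' of $I$ itself (e.g.\ along antidiagonals) under which one application of $\row$ advances the boundary word cyclically. No such static relabeling works. The correct conjugator is not a re-reading of the boundary of the same order ideal; it is a nontrivial element of the toggle group that must first be \emph{applied} to $I$, producing a genuinely different order ideal whose boundary word is then read off. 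In the paper this is the diagonal element $D=\prod_{i=1}^{m-1}\prod_{j=i}^{1} d_j^{-1}$ of Theorem~\ref{thm:explicit}, so your $\psi$ would have to be $w\circ D$ (up to inverses and phase), and ``corner-chasing'' on a fixed ideal will not find it. Your appeal to Theorem~\ref{thm:fon_n_k} is also weaker than you suggest: to conclude even abstractly that two $C_{n+k}$-sets are isomorphic you need the orbit-length multiset with multiplicities, and the theorem as quoted only gives the set of possible lengths.

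For comparison, the paper's proof avoids constructing $\psi$ on the nose. It realizes $\pro$ as the product of column toggles $\pro_{k\ldots 21}$ and $\row$ as the product of row toggles $\row_{12\ldots n}$ inside the toggle group of the \rcposet{} $[n]\times[k]$, and then shows (Theorem~\ref{thm:maintheorem}, via Lemmas~\ref{lem:symconj} and~\ref{lem:eq}) that any two such products are conjugate: both can be rewritten as $\row_{135\ldots246\ldots}=\pro_{135\ldots246\ldots}$, the equality holding because every element of an \rcposet{} sits in a row and a column of the same parity, so the odd-row toggles are exactly the odd-column toggles and all commute among themselves. If you want to salvage your approach, the cleanest fix is to take $\psi = w\circ g$ for an explicit toggle-group element $g$ conjugating $\row$ to $\pro_{k\ldots21}$---which is precisely the framework you defer to in your last sentence.
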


Note that $\mathcal{L}([n] \oplus [k])$ can be thought of as skew SYT of shape $(n+k,k) / (k)$.  $\mathcal{L}([n] \oplus [k])$ is also in equivariant bijection with the set $\binom{[n+k]}{k}$ under the cycle $(1,2,\ldots,n+k)$, so that Theorem~\ref{thm:denni} applies.  Figure~\ref{ex:example34} illustrates this theorem for the case $n=k=2$.

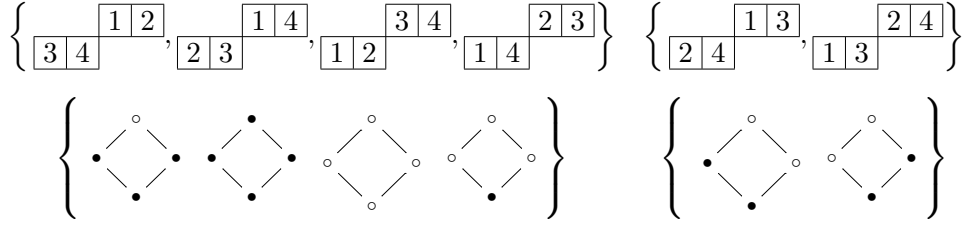
\begin{figure}[ht]
\begin{center}
$\begin{array}{cc}
\left \{ \begin{gathered} $$ \young(::12,34), \young(::14,23), \young(::34,12), \young(::23,14) $$ \end{gathered} \right\} & \left \{ \begin{gathered} $$ \young(::13,24), \young(::24,13)$$\end{gathered} \right\} \vspace{10pt} \\ 
 \left \{ \begin{gathered}\scalebox{0.7}{$$\xymatrix @-1.2pc {
& \circ & \\
\ar@{-}[ur] \bullet & & \ar@{-}[ul] \bullet \\
& \bullet \ar@{-}[ur] \ar@{-}[ul] &} \text{ } \xymatrix @-1.2pc {
& \bullet & \\
\ar@{-}[ur] \bullet & & \ar@{-}[ul] \bullet \\
& \bullet \ar@{-}[ur] \ar@{-}[ul] &} \text{ } \xymatrix @-1.0pc {
& \circ & \\
\ar@{-}[ur] \circ & & \ar@{-}[ul] \circ \\
& \circ \ar@{-}[ur] \ar@{-}[ul] &} \text{ } \xymatrix @-1.2pc {
& \circ & \\
\ar@{-}[ur] \circ & & \ar@{-}[ul] \circ \\
& \bullet \ar@{-}[ur] \ar@{-}[ul] & }$$}\end{gathered} \right \} & \left \{ \begin{gathered}\scalebox{0.7}{$$ \xymatrix @-1.0pc {
& \circ & \\
\ar@{-}[ur] \bullet & & \ar@{-}[ul] \circ \\
& \bullet \ar@{-}[ur] \ar@{-}[ul] &} \text{ } \xymatrix @-1.2pc {
& \circ & \\
\ar@{-}[ur] \circ & & \ar@{-}[ul] \bullet \\
& \bullet \ar@{-}[ur] \ar@{-}[ul] &}$$}\end{gathered} \right \} \\ 
\end{array}$
\end{center}
\caption{The two orbits of $\mathcal{L}([2]\oplus[2])$ under $\pro$, and the two orbits of $J([2] \times [2])$ under $\row$.}
\label{ex:example34}
\end{figure}

\subsection{Positive Root Posets}

Let $W$ be a Weyl group for a root system $\Phi(W)$.

\begin{definition}
	We denote the positive root poset of type $W$ as $\Phi^+(W)$, where if $\alpha,\beta \in \Phi^+ (W)$, then $\alpha \leq \beta$ if $\beta - \alpha$ is a nonnegative sum of positive roots.
\end{definition}

The set of positive roots for the classical types is given below.
\begin{itemize}
\item
 $\Phi^+ (A_n) = \{ e_i - e_j | 1\leq i < j \leq n+1 \}$,
\item
$\Phi^+ (B_n) = \{ e_i \pm e_j | 1 \leq i < j \leq n \} \cup \{ e_i | 1 \leq i \leq n \}$,
\item
$\Phi^+ (C_n) = \{ e_i \pm e_j | 1 \leq i < j \leq n \} \cup \{ 2e_i | 1 \leq
i \leq n \}$, and
\item
$\Phi^+ (D_n) = \{e_i \pm e_{j} | 1 \leq i < j \leq n \}.$
\end{itemize}

Note that $\Phi^+ (C_n)$ is isomorphic to $\Phi^+ (B_n)$.  The positive root posets for $A_3$, $B_3$, $C_3$, and $D_4$ are given in Figure~\ref{ex:rootposets}.

\begin{figure}[ht]
\begin{center}
$\begin{array}{|l|l|}
\hline
\Phi^+ (A_3)
\raisebox{-27pt}
{{\scalebox{0.6}{$$\xymatrix @-1.2pc {
& & e_1-e_4 & & \\
& e_1-e_3 \ar@{-}[ur] & & e_2-e_4 \ar@{-}[ul] & \\
e_1 - e_2 \ar@{-}[ur] & & e_2-e_3 \ar@{-}[ur] \ar@{-}[ul] & & e_3-e_4
\ar@{-}[ul]}$$}}}
&
\Phi^+ (B_3)
\subfigure
{\scalebox{0.7}{$$\xymatrix @-1.2pc {
& & & & e_1+e_2 \\
& & & e_1+e_3 \ar@{-}[ur] & \\
& & e_1 \ar@{-}[ur] & & e_2+e_3 \ar@{-}[ul] \\
& e_1-e_3 \ar@{-}[ur] & & e_2 \ar@{-}[ur] \ar@{-}[ul] & \\
e_1 - e_2 \ar@{-}[ur] & & e_2-e_3 \ar@{-}[ur] \ar@{-}[ul] & & e_3
\ar@{-}[ul]}$$}} \\
\hline
\Phi^+ (C_3)
\subfigure
{\scalebox{0.7}{
$$ \xymatrix @-1.2pc {
& & & & 2e_1 \\
& & & e_1+e_2 \ar@{-}[ur] & \\
& & e_1+e_3 \ar@{-}[ur] & & 2e_2 \ar@{-}[ul] \\
& e_1-e_3 \ar@{-}[ur] & & e_2+e_3 \ar@{-}[ur] \ar@{-}[ul] & \\
e_1 - e_2 \ar@{-}[ur] & & e_2-e_3 \ar@{-}[ur] \ar@{-}[ul] & & 2e_3
\ar@{-}[ul]}$$}} &
\Phi^+ (D_4)
\subfigure
{\scalebox{0.6}{
$$ \xymatrix @-1.2pc {
& & & & & &e_1+e_2\\
& & & & & e_1+e_3 \ar@{-}[ur] &\\
& & e_1-e_4 \ar@{-}[urrr] & & e_1+e_4 \ar@{-}[ur] & &e_2+e_3 \ar@{-}[ul]\\
& e_1-e_3 \ar@{-}[ur] \ar@{-}[urrr] & & e_2-e_4 \ar@{-}[ul] \ar@{-}[urrr] & &
e_2+e_4 \ar@{-}[ul] \ar@{-}[ur]&\\
e_1 - e_2 \ar@{-}[ur] & & e_2-e_3 \ar@{-}[ur] \ar@{-}[ul] \ar@{-}[urrr] & & e_3
- e_4 \ar@{-}[ul] & & e_3+e_4 \ar@{-}[ul]}$$}} \\
\hline
\end{array}$
\end{center}
\caption{The positive root posets $A_3$, $B_3$, $C_3$, and $D_4$.}
\label{ex:rootposets}
\end{figure}
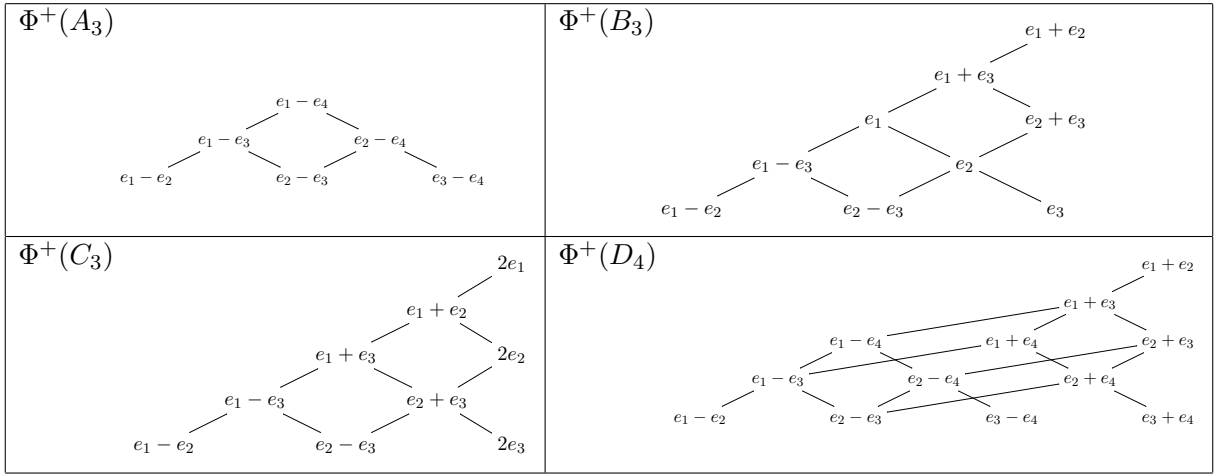

\begin{definition}
The order ideals $J(\Phi^+(W))$ are called the nonnesting partitions of type $W$.
\end{definition}

Let $h$ be the Coxeter number for $W$, let $d_1, d_2, \ldots, d_n$ be the degrees of $W$, and define $Cat(W,q) = \prod_{i=1}^n \frac{[h+d_i]_q}{[d_i]_q}$.  Through work of P.~Cellini, P.~Papi, C.~Athanasiadis, and M.~Haiman~\cite{cellini2002ad,athanasiadis2004generalized,haiman1994conjectures}, it was proven that $Cat(W,1) = |J(\Phi^+ (W))|$.

In 2007, D.\ Panyushev considered applying rowmotion to the nonnesting partitions of type $W$~\cite{panyushev2009orbits}.  He conjectured several properties of rowmotion, the most relevant to our story being the following.

\begin{conjecture}[D.\ Panyushev]
	The order of $\row$ on $J(\Phi^+ (A_n))$, $J(\Phi^+ (D_n))$ for $n$ odd, and $J(\Phi^+ (E_6))$ is $2h$, and the order is $h$ for all other types.
\end{conjecture}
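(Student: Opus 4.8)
The plan is to use the main equivariant bijection of Section~\ref{sec:conjugate}, which identifies the $\row$-orbit structure on the order ideals of an \rcposet{} with the $\pro$-orbit structure on the same ideals, to reduce each order computation to the order of a rotation on a combinatorial object whose cyclic sieving is already understood. For each classical type I would first equip $\Phi^+(W)$ with an \rcposet{} structure, then use the main theorem to convert $\row$ on $J(\Phi^+(W))$ into $\pro$ acting on the columns of the same ideals, and finally match these ideals---equivariantly---with a family of tableaux or subsets under rotation. Since the bijection preserves orbit lengths, the order of $\row$ equals the order of the corresponding rotation, which I can read off from Rhoades' theorem or from Theorem~\ref{thm:denni}. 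The exceptional types fall outside this framework and would be confirmed by direct computation on the small finite posets $\Phi^+(E_6),\ldots,\Phi^+(G_2)$.

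For type $A_n$ the staircase poset $\Phi^+(A_n)$ is an \rcposet{} whose $C_{n+1}$ order ideals I would identify, through the bijection, with the linear extensions of $[2]\times[n+1]$---that is, the standard Young tableaux of the two-rowed rectangle with $2(n+1)$ boxes---under $\pro$. Rhoades' theorem shows the order of promotion divides the number of boxes $2(n+1)=2h$, and for $n\geq 2$ a free orbit of this length exists, so the order is exactly $2h$; for $A_1$ the set of ideals is too small and the order degenerates to $2$. The factor of two thus appears transparently, coming from the rectangle having exactly two rows.

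The types $B_n\cong C_n$ and $D_n$ are treated in Sections~\ref{sec:othertypes} and~\ref{sec:threedim}. In type $B_n$ the \rcposet{} structure turns $\row$ into a rotation of order $2n=h$; here the longest element satisfies $-w_0=1$, so no extra symmetry intervenes and the doubling seen in type $A$ is absent. Type $D_n$ is genuinely three-dimensional, and using the plane-partition model of Section~\ref{sec:threedim} I would realize $\row$ as a rotation of order $2n-2=h$---except that when $n$ is odd an additional order-two symmetry of the model, the diagram automorphism induced by $-w_0\neq 1$, doubles every orbit length to give $2h=4n-4$. Making this parity-dependent symmetry precise, and verifying that it genuinely doubles the orbit lengths rather than merely permuting orbits among themselves, is the delicate step.

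The main obstacle is that the \rcposet{} machinery supplies no uniform reason for the doubling: it must be diagnosed type by type as the failure of $-w_0$ to equal the identity, which happens exactly for $A_{n\geq 2}$, $D_n$ with $n$ odd, and $E_6$. I expect the hardest parts to be (i) pinning down the correct rotation-object and the order-two symmetry in type $D$ so that the parity split falls out cleanly, and (ii) the exceptional case $E_6$, where the \rcposet{} dictionary does not apply: one must verify $2h=24$ by direct computation and explain, without a uniform argument, why $E_6$ behaves like type $A$ rather than like its neighbors $E_7$ and $E_8$, for which the same computation returns $h$.
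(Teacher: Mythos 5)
There is a preliminary point: the paper does not prove this statement. It appears as D.~Panyushev's conjecture in the historical Section~\ref{sec:history}, and the paper attributes its resolution to \cite{Armstrong2011}. The paper's own machinery is only claimed to recover the type $A$ and $B$ cases (Sections~\ref{roottypea} and~\ref{roottypeb}) together with a strictly partial type $D$ statement, so there is no in-paper proof to compare against and your proposal must be judged on its own merits.

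Judged that way, your type $A$ and $B$ arguments do track the paper's route (Theorem~\ref{thm:maintheorem} to pass from $\row$ to $\pro$, then Theorem~\ref{thm:WhiteA} and half-turn-symmetric matchings to reach a rotation of known order), and settling the exceptional types by finite computation is unobjectionable, if non-uniform. The genuine gap is type $D$. You propose to realize $\row$ on $J(\Phi^+(D_n))$ as a rotation of order $h=2n-2$ via ``the plane-partition model of Section~\ref{sec:threedim},'' but $\Phi^+(D_n)$ is not a product of chains, so the results of Section~\ref{sec:ppsec} do not apply to it; the height-two rc-poset drawing of $\Phi^+(D_n)$ yields only Corollary~\ref{thm:typeD}, an internal equivalence of $\row$ and $\pro$, not an equivariant bijection to any rotated object. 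The paper is explicit that its boundary-path technique handles only those $I\in J(\Phi^+(D_n))$ with $e_i+e_n\in I$ if and only if $e_i-e_n\in I$ (whose orbits collapse onto type $B_n$ orbits), and that the general case required the uniform induction of \cite{Armstrong2011}. Moreover, even granting a rotation model, your claimed ``order-two symmetry induced by $-w_0\neq 1$'' could a priori fix orbits setwise rather than double their lengths; distinguishing these possibilities is precisely the content of the parity split in the conjecture, and your sketch supplies no mechanism for it. As written, the type $D$ case is asserted rather than proved, and the same heuristic is what would have to carry $E_6$ versus $E_7$, $E_8$ absent computation.
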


D.\ Bessis and V.\ Reiner then made the stronger conjecture that there was a CSP~\cite{bessis701792cyclic}.

\begin{conjecture}[D. Bessis, V. Reiner]
	Let $C_{2h}$ act on $J(\Phi^+ (W))$ by $\row$.  Then \\ $(J(\Phi^+ (W)), Cat(W,q), C_{2h})$ exhibits the CSP.
\end{conjecture}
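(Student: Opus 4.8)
The plan is to transport the cyclic sieving phenomenon across the equivariant bijection between $\row$ and $\pro$ developed in this paper, reducing the conjecture to cyclic sieving results that are already established for promotion and for rotation. The basic principle is that the CSP is an invariant of equivariant bijections: if $\phi\colon (X,\row)\to(Y,g)$ satisfies $\phi\circ\row = g\circ\phi$, where $g$ generates a cyclic action on $Y$, then $\phi$ carries $\{x: \row^k(x)=x\}$ bijectively onto $\{y: g^k(y)=y\}$ for every $k$, so $\row$ and $g$ have identical orbit structures. Consequently any CSP triple $(Y,f(q),C_m)$ yields the triple $(X,f(q),C_m)$ with the same polynomial and the same cyclic group. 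It therefore suffices, type by type, to realize $J(\Phi^+(W))$ under $\row$ as a known object under rotation that carries an established CSP.

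For type $A_n$, the equivariant bijection of this paper identifies $J(\Phi^+(A_n))$ under $\row$ with the standard Young tableaux of the two-row rectangle $(n+1,n+1)$ under $\pro$; these number $\tfrac{1}{n+2}\binom{2n+2}{n+1}=|J(\Phi^+(A_n))|$, and the rectangle has exactly $2(n+1)=2h$ boxes. B.\ Rhoades's cyclic sieving theorem for rectangular tableaux~\cite{rhoades2010cyclic} then supplies a CSP with the $q$-hook-length polynomial $f^\lambda(q)$ and cyclic group $C_{2(n+1)}=C_{2h}$, so the cyclic groups agree. Computing the hooks of $(n+1,n+1)$ gives $f^\lambda(q)=\dfrac{[2n+2]!_q}{[n+2]!_q\,[n+1]!_q}=\dfrac{[n+3]_q\cdots[2n+2]_q}{[n+1]!_q}$, and since the degrees of $A_n$ are $2,\ldots,n+1$ with $h=n+1$ one checks directly that $Cat(A_n,q)=\prod_{i=1}^{n}\tfrac{[h+d_i]_q}{[d_i]_q}$ collapses to the same ratio of $q$-factorials. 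This settles type $A$ completely.

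For the remaining classical types I would seek analogous equivariant bijections from $J(\Phi^+(W))$ under $\row$ to rotation actions with known CSPs, routing type $B_n$ (equivalently $C_n$) through a rotation model whose sieving can be read from Theorem~\ref{thm:denni}, and matching $Cat(B_n,q)=\prod_{i=1}^{n}[2n+2i]_q/[2i]_q$ against the resulting polynomial. A genuine subtlety enters here: the Bessis--Reiner group is $C_{2h}$ uniformly, whereas for all types other than $A_n$, $D_n$ with $n$ odd, and $E_6$ the action $\row$ has order only $h$. One must therefore verify that $Cat(W,q)$ evaluated at the ``extra'' primitive $2h$-th roots of unity still records the fixed point sets of the corresponding powers of $\row$, a compatibility that does not follow formally from an order-$h$ reduction and must be extracted from the symmetry of the polynomial.

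The main obstacle is that this combinatorial strategy is confined to types whose positive root posets carry the \rcposet{} structure. Type $D_n$ already escapes a two-dimensional model and must be approached through the three-dimensional plane-partition perspective, where both the bijection and the polynomial match are delicate. The exceptional types $E_6,E_7,E_8,F_4,G_2$ lie entirely outside the reach of promotion and rowmotion, since their root posets are not \rcposet{}s, so no transfer of the above kind is available. For a fully uniform proof one would have to leave this framework and instead transport the CSP from Kreweras complementation on noncrossing partitions through the uniform equivariant bijection of D.\ Armstrong, C.\ Stump, and H.\ Thomas~\cite{Armstrong2011}; absent such a uniform argument, the exceptional cases would have to be confirmed by direct computation.
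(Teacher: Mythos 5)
First, a point of framing: the statement you are proving is recorded in the paper as a \emph{conjecture} of Bessis and Reiner, and the paper does not prove it. It attributes the proof to D.\ Armstrong, C.\ Stump, and H.\ Thomas~\cite{Armstrong2011}, whose route is entirely different from yours: a uniformly characterized equivariant bijection from nonnesting partitions under $\row$ to noncrossing partitions under Kreweras complementation, followed by a CSP for the noncrossing side. The machinery of this paper (conjugacy of $\pro$ and $\row$ in the toggle group of an \rcposet{}) is used only to recover the type $A$ and $B$ instances of that bijection, not to reprove the sieving statement in general.

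Within that caveat, your type $A$ argument is essentially sound and is a mild repackaging of what the paper does: the paper passes from $J(\Phi^+(A_n))$ under $\row$ to SYT of shape $(n+1,n+1)$ under $\pro$ and then to noncrossing matchings under rotation via Theorem~\ref{thm:WhiteA}, and invokes the known CSP there; you stop one step earlier and invoke B.\ Rhoades's rectangular CSP~\cite{rhoades2010cyclic} directly. These are equivalent, your hook-length computation matching $f^{(n+1,n+1)}(q)$ with $Cat(A_n,q)$ is correct, and the cyclic groups $C_{2n+2}=C_{2h}$ agree. The genuine gap is everything else. The conjecture quantifies over \emph{all} Weyl groups $W$, and your framework is structurally confined to the classical types whose root posets are \rcposet{}s; you carry out only type $A$, leaving type $B$ as a program (you would still need to verify that the half-turn-symmetric matching model exhibits the CSP with $Cat(B_n,q)$ for the full group $C_{2h}=C_{4n}$, not merely $C_h$ --- the subtlety you flag about evaluating at primitive $2h$-th roots of unity is real and is not resolved by anything you write), type $D$ as an acknowledged difficulty, and the exceptional types entirely out of reach. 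A referee would therefore not accept this as a proof of the stated conjecture; it is at best a proof of its type $A$ case plus a plan. If your goal is the full statement, you must either import the uniform argument of~\cite{Armstrong2011} (at which point your combinatorial reductions become redundant) or supply the remaining classical-type CSP verifications and a separate treatment of $D_n$, $E_6$, $E_7$, $E_8$, $F_4$, $G_2$.
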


These two conjectures were recently proved by D.\ Armstrong, C.\ Stump, and H.\ Thomas in~\cite{Armstrong2011}, in which they inductively defined an equivariant bijection between nonnesting and noncrossing partitions.  By then proving that noncrossing partitions under Kreweras complementation exhibit the CSP (conjectured in~\cite{bessis701792cyclic}), they solved the conjecture.  They further showed that this bijection was uniformly characterized by certain initial conditions, equivariance of rowmotion and Kreweras complementation, and parabolic recursion---thus defining the first uniform bijection between nonnesting and noncrossing partitions.

\begin{theorem}[D.\ Armstrong, C.\ Stump, and H.\ Thomas]
\label{thm:armstrongetal}
	There is a uniformly-characterized equivariant bijection between nonnesting partitions under rowmotion and noncrossing partitions under Kreweras complementation.
\end{theorem}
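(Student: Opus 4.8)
The plan is to pin down the desired bijection $\Theta \colon J(\Phi^+(W)) \to NC(W)$ not by an explicit formula but by a short list of axioms that constitute the uniform characterization, and then to prove separately that (a) at most one map satisfies the axioms and (b) at least one does. Throughout, $NC(W)$ denotes the noncrossing partitions of type $W$, realized as the interval $[e,c]$ below a fixed Coxeter element $c$ in the absolute order on $W$, with Kreweras complementation the bijection $K \colon NC(W) \to NC(W)$ sending $w$ to $w^{-1}c$; on the nonnesting side the relevant action is $\row$. Since $Cat(W,1) = |J(\Phi^+(W))|$ and both sets carry an action whose order divides $2h$, the enumerative and cyclic data already agree; the content of the theorem is that a \emph{single} bijection realizes the match compatibly across all parabolic subsystems at once.

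The three axioms I would impose are: (i) an \emph{initial condition} fixing $\Theta$ in rank one and on the empty order ideal; (ii) \emph{equivariance}, $\Theta \circ \row = K \circ \Theta$; and (iii) \emph{parabolic recursion}, requiring that whenever an order ideal $I \in J(\Phi^+(W))$ is supported on the positive roots of a proper standard parabolic subsystem $\Phi^+(W_J)$, its image $\Theta(I)$ coincides with the image of $I$ under the already-constructed bijection for $W_J$, transported into $NC(W)$ along the inclusion $NC(W_J) \hookrightarrow NC(W)$.

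\textbf{Uniqueness.} I would argue by induction on the rank of $W$. By axiom (iii) the map $\Theta$ is forced on every ``parabolic'' order ideal, and since $\Theta(\row^k(I)) = K^k(\Theta(I))$, axiom (ii) then forces it on the entire $\row$-orbit of each such ideal. The crux of the uniqueness argument is therefore the combinatorial claim that every $\row$-orbit on $J(\Phi^+(W))$ contains at least one order ideal supported on a proper parabolic subsystem (the orbit of the empty ideal being governed directly by the initial condition). This is where Panyushev-type control of rowmotion orbits enters: one tracks how the set of minimal roots not in $I$ migrates under $\row$ and shows that within a full orbit some ideal must fail to use a prescribed simple root, forcing it into a proper parabolic. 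Granting this, the initial condition together with the two propagation rules determines $\Theta$ uniquely.

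\textbf{Existence and the main obstacle.} For existence I would \emph{define} $\Theta$ by the same propagation---set it on parabolic ideals via the rank-$|J|$ bijections and extend by equivariance---and then check that this recipe is well defined and bijective. The hard part is exactly the well-definedness: a single orbit may contain several distinct parabolic ideals, supported on different parabolics or on the same one after several applications of $\row$, and one must verify that the values forced through these different routes agree. Establishing this consistency requires a precise understanding of how $\row$ permutes the parabolic strata and how $K$ does the same for the inclusions $NC(W_J) \hookrightarrow NC(W)$; synchronizing these two combinatorial clocks is the technical heart of the argument and is what ties the order $2h$ on both sides together. Once consistency is in hand, bijectivity follows because $\Theta$ restricts to the inductively bijective parabolic maps, commutes with two actions of the same order $2h$, and the cardinalities agree; the CSP for $J(\Phi^+(W))$ under $\row$ is then inherited from that of $NC(W)$ under $K$ by equivariance. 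I expect the residual difficulty to be that a fully uniform verification of consistency is delicate for the exceptional types, so the cleanest route may combine the uniform uniqueness and recursion with a finite case check in types $E$, $F$, and $G$.
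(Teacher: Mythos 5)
This theorem is not proved in the paper at all: it is quoted from Armstrong, Stump, and Thomas~\cite{Armstrong2011}, and the paper's only ``proof-level'' content about it is the one-sentence description that the bijection is characterized by initial conditions, equivariance of rowmotion and Kreweras complementation, and parabolic recursion. Your axioms (i)--(iii) reproduce exactly that characterization, and your closing remark---uniform uniqueness plus a case check for existence---is in fact how the cited paper proceeds (uniform uniqueness, with existence verified type by type, via combinatorial models in the classical types and by computer in the exceptional ones). So as a reconstruction of the cited argument's architecture, your proposal is on target; the paper itself even corroborates your key orbit claim implicitly when it notes in Section~7 that, \emph{unlike} for positive root posets, not every orbit of $J(\aposet_n)$ contains an element supported on a smaller instance---i.e., for $\Phi^+(W)$ every $\row$-orbit does meet a proper parabolic, which is what makes the recursion run.

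That said, as a proof your proposal has two genuine gaps, both of which you flag but neither of which you close. First, the claim that every $\row$-orbit on $J(\Phi^+(W))$ (other than the one handled by the initial condition) contains an ideal supported on a proper standard parabolic is a real theorem requiring a real argument; ``tracking how the minimal roots not in $I$ migrate'' is not yet a proof, and this is precisely where the difficulty of controlling rowmotion orbits (the subject of Panyushev's conjectures, only resolved by~\cite{Armstrong2011} itself) lives. Second, the well-definedness of the propagation---that the value of $\Theta$ forced through two different parabolic ideals in the same orbit, or through two different parabolics containing the same ideal, agrees---is the technical heart, and you correctly identify it as such without supplying it; no uniform argument for it is known, which is why the existence half of the published proof is case-by-case. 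So the proposal should be read as an accurate outline of the strategy of the cited work rather than as a self-contained proof; within the present paper there is nothing to compare it against, since the authors take Theorem~\ref{thm:armstrongetal} as an external input.
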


In the classical types, the three used a known equivariant bijection between noncrossing partitions under Kreweras complementation and noncrossing matchings under rotation in order to have a combinatorial model. 
These noncrossing matchings under rotation are known to have the order conjectured by D.\ Panyushev and to exhibit the CSP, from which the result follows.

It is the noncrossing matchings that we can now associate with linear extensions, using an unpublished result of D.\ White~\cite{rhoades2010cyclic, petersen2009promotion, sagan2010cyclic}.

\begin{theorem}[D.\ White]
\label{thm:WhiteA}
	An equivariant bijection between type $A_n$ noncrossing matchings under rotation and SYT of shape $(n+1,n+1)$ under promotion is given by placing $i$ in the first row when it is the smaller of the two numbers in its matching.
\end{theorem}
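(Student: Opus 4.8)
The plan is to prove the statement in two stages: that the prescribed rule is a bijection, and that it carries rotation to promotion. Throughout set $m = n+1$, so a type $A_n$ noncrossing matching is a noncrossing perfect matching of the $2m$ cyclically-ordered points $1, 2, \ldots, 2m$, and the target tableaux have rectangular shape $(m,m)$. I would record a matching $M$ by the word $w = w_1 \cdots w_{2m} \in \{U,D\}^{2m}$ with $w_i = U$ when $i$ is the \emph{smaller} element of its pair (an \emph{opener}, destined for the first row) and $w_i = D$ otherwise (a \emph{closer}, destined for the second row); by the stated rule $w$ is exactly the record of the rows into which $\Phi(M)$ places $1, 2, \ldots, 2m$.

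First I would show $\Phi(M)$ is a genuine SYT of shape $(m,m)$. Filling the first row with the $U$-positions in increasing order and the second row with the $D$-positions in increasing order makes the rows increase automatically, so only column-strictness is at issue, i.e.\ that the $k$-th smallest opener is less than the $k$-th smallest closer for every $k$. This is equivalent to the ballot condition that every prefix of $w$ has at least as many $U$'s as $D$'s, and that condition holds for \emph{any} matching: a closer lying in a prefix is matched to an opener lying in the same prefix, giving an injection from closers to openers within each prefix. For bijectivity I would exhibit the inverse: a ballot word determines a unique noncrossing matching by the innermost (stack) pairing rule, and the opener/closer pattern of that matching returns $w$. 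Since noncrossing matchings of $2m$ points and SYT of shape $(m,m)$ are both counted by the Catalan number $C_m$, this makes $\Phi$ a bijection.

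For equivariance I would transport both actions to operations on the word $w$ and check they coincide. On the matching side, rotation is the cyclic relabeling sending each point $i$ to $i-1$ (with $1 \mapsto 2m$); I would compute its effect on $w$ directly. Write $w = U\,\alpha\,D\,\beta$, where $U\alpha D$ is the first prime (return-to-zero) factor, so that $\alpha$ and $\beta$ are themselves ballot words. Point $1$ is always the opener at the front, and under rotation it moves to the last position, where it becomes a closer, while its partner---the $D$ at the first return to height $0$---turns into an opener, every other point merely shifting. In word terms this is the clean rule $U\,\alpha\,D\,\beta \mapsto \alpha\,U\,\beta\,D$.

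The crux is to show that $\pro$ induces exactly this same rule on two-row words; matching the two computations then finishes the proof. Using the $\rho_i$-description (equivalently jeu de taquin) the local behavior is transparent: $\rho_i$ transposes $w_i, w_{i+1}$ precisely when $i, i+1$ are not in a covering relation, which on a two-row tableau means $DU \mapsto UD$ always, $UU$ and $DD$ are fixed, and $UD$ is fixed exactly when its $D$ is a return to height $0$ (the two cells then share a column) and swapped otherwise. The main obstacle is controlling the composite $\rho_{2m-1}\cdots\rho_1$: I expect to prove, by tracking the jeu-de-taquin hole emptied at $(1,1)$ and using the height profile of $w$ as the controlling invariant, that the hole travels along the first row until the column of the first return to height $0$, drops to the second row there, and then exits along the second row---so that the net effect on $w$ is precisely $U\alpha D\beta \mapsto \alpha U \beta D$. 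This is the only real computation; the direction convention (that promotion matches rotation in the sense above) and the base case are settled by the shape-$(2,2)$ calculation. Combining the two word computations yields $\pro\circ\Phi = \Phi\circ\mathrm{rot}$, the asserted equivariance.
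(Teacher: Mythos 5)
The paper does not actually prove Theorem~\ref{thm:WhiteA}: it is quoted as an unpublished result of D.~White with pointers to the literature, so there is no internal proof to compare against. Judged on its own terms, your argument is correct and essentially complete as an outline. The word encoding, the verification that column-strictness is exactly the ballot condition (which indeed holds for arbitrary matchings, with noncrossingness only needed for the inverse via stack-pairing), and the computation that rotation acts on words by $U\alpha D\beta\mapsto\alpha U\beta D$ (using that in a noncrossing matching the partner of $1$ sits at the first return to height $0$) are all sound. The one step you leave as a claim---that the jeu-de-taquin hole travels along the first row until the column of the first return to height $0$, drops there, and exits along the second row---is correctly identified as the crux, and your proposed invariant does close it: with row~$1$ entries $u_1<\cdots<u_m$ and row~$2$ entries $d_1<\cdots<d_m$, the hole at $(1,j)$ moves right iff $u_{j+1}<d_j$, which for $j$ less than the column $c$ of the first return follows from the strict positivity of the height on proper prefixes, while $u_{c+1}>d_c$ forces the drop at column $c$; the resulting relabeling is exactly $\alpha U\beta D$, matching the rotation rule. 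So there is no missing idea or failing step; to be fully rigorous you would only need to write out that two-line comparison. (One small caution: your local description of $\rho_i$ on the letters $w_iw_{i+1}$ is applied to intermediate fillings that need not be standard, which is why your decision to run the composite through the jeu-de-taquin hole rather than letter-by-letter is the right move.)
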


In analogy with Theorem \ref{thm:prorowbinomial}, we can restate the type $A_n$ result of Theorem~\ref{thm:armstrongetal} in the language of promotion.  Note that we are unable to rephrase their general result in this way, and we therefore lose the uniformity of their theorem.

\begin{theorem}
\label{thm:prorowcatalan}
	There is an equivariant bijection between $\mathcal{L}([2]\times[n+1])$ under $\pro$ and $J(\Phi^+ (A_n))$ under $\row$.
\end{theorem}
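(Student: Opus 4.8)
The plan is to obtain this restatement purely by composing the equivariant bijections already assembled in this section, so that no genuinely new combinatorics is needed beyond careful bookkeeping of the cyclic actions. The first step is the observation that linear extensions of the grid poset $[2]\times[n+1]$ are literally standard Young tableaux of the rectangular shape $(n+1,n+1)$: reading the first factor $[2]$ as rows and the second factor $[n+1]$ as columns realizes $[2]\times[n+1]$ as the Ferrers poset of $(n+1,n+1)$, and under this identification the promotion of Definition~\ref{def:pro} is by construction the promotion on SYT. Thus $\mathcal{L}([2]\times[n+1])$ under $\pro$ is the same dynamical system as $SYT((n+1,n+1))$ under $\pro$, and in particular the two sets have the same cardinality, the Catalan number counting $J(\Phi^+(A_n))$.

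Next I would string together the three cited correspondences. By D.\ White's Theorem~\ref{thm:WhiteA}, $SYT((n+1,n+1))$ under $\pro$ is equivariantly in bijection with type $A_n$ noncrossing matchings under rotation. These noncrossing matchings under rotation are, by the standard correspondence recalled just above the theorem statement, equivariantly in bijection with the noncrossing partitions of type $A_n$ under Kreweras complementation. Finally, the type $A_n$ case of Theorem~\ref{thm:armstrongetal} gives an equivariant bijection between noncrossing partitions under Kreweras complementation and the nonnesting partitions $J(\Phi^+(A_n))$ under $\row$. Composing these three bijections with the identification of the first paragraph produces a map $\mathcal{L}([2]\times[n+1]) \to J(\Phi^+(A_n))$, and since a composite of equivariant bijections is itself equivariant, this map intertwines $\pro$ on the source with $\row$ on the target.

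The only real work lies in confirming that the cyclic actions line up correctly across the whole chain, and this is where I expect the main friction. Each arrow must carry one application of $\pro$ to exactly one application of the next map --- one step of rotation of the matchings, then one Kreweras complement, then one step of $\row$ --- with no hidden factor of two and no reversal of orientation. A reassuring consistency check is that the ambient orders agree: promotion on the rectangle $(n+1,n+1)$, which has $2(n+1)$ boxes, is governed by $C_{2(n+1)}$, while $\row$ on $J(\Phi^+(A_n))$ is governed by $C_{2h}$ with Coxeter number $h=n+1$, so both systems live over the same group $C_{2(n+1)}$. The delicate point is the rotation-versus-Kreweras normalization, since Kreweras complementation and rotation each carry a natural but distinct indexing; one must verify that a single rotation of the $2(n+1)$ points of a matching corresponds to a single Kreweras complement and thence to a single application of $\row$. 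Granting the normalizations fixed by the cited theorems, the composite is the desired equivariant bijection, and the type $A_n$ content of Theorem~\ref{thm:armstrongetal} is thereby recast in the language of promotion.
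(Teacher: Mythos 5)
Your argument is logically sound, but it is not the paper's proof---and in fact it runs in exactly the opposite direction to the one the paper intends. You derive the statement by composing the identification $\mathcal{L}([2]\times[n+1]) = SYT((n+1,n+1))$ with D.\ White's bijection to noncrossing matchings under rotation, the standard matching/partition correspondence, and finally the Armstrong--Stump--Thomas equivariant bijection between noncrossing partitions under Kreweras complementation and $J(\Phi^+(A_n))$ under $\row$. That is precisely how the theorem is \emph{motivated} in the history section (as a restatement of the type $A$ case of Theorem~\ref{thm:armstrongetal}), but the paper's actual proof in Section~\ref{roottypea} deliberately avoids invoking Armstrong--Stump--Thomas: it identifies a linear extension of $[2]\times[n+1]$ with a maximal chain in $J([2]\times[n+1])$, hence with an order ideal of the interior \rcposet{} $\text{Int}(J((n+1,n+1))) = \Phi^+(A_n)$ on which promotion acts as the column-toggle product $\pro_{k\ldots21}$ (Theorem~\ref{prop:proback}), and then applies the main conjugacy result Theorem~\ref{thm:maintheorem} to pass from $\pro$ to $\row$ inside the toggle group. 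The trade-off is clear: your route leans on the deep, inductively constructed ASST bijection (and inherits its normalization headaches, which you rightly flag but only defer to the citations), whereas the paper's route is elementary, self-contained, and generalizes uniformly to all \rcposet{}s---indeed, giving a new proof of this theorem independent of \cite{Armstrong2011} is one of the stated purposes of the paper, so as a proof \emph{within this paper} your composition would be importing the conclusion rather than establishing it.
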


Note that $\mathcal{L}([2]\times[n+1])$ are SYT of shape $(n+1,n+1)$.  Figure~\ref{ex:syt33} illustrates this theorem for $n=2$.

\begin{figure}[ht]
\begin{center}
$\begin{array}{cc}

\scalebox{0.7}{
$\left \{ \quad \begin{gathered}
\setlength{\unitlength}{2mm}
\begin{picture}(12,12)
\bigcircle{5}{5}{5}

\put(5,10){\circle*{0.3}}
\put(9.33,7.5){\circle*{0.3}}
\put(9.33,2.5){\circle*{0.3}}
\put(5,0){\circle*{0.3}}
\put(0.67,2.5){\circle*{0.3}}
\put(0.67,7.5){\circle*{0.3}}

\qbezier[500](5,10)(5,5)(0.67,7.5)
\qbezier[500](9.33,7.5)(5,5)(0.67,2.5)
\qbezier[500](9.33,2.5)(5,5)(5,0)

\put(4.7,10.3){\small{1}}
\put(9.7,7.3){\small{2}}
\put(9.7,2){\small{3}}
\put(4.7,-1.5){\small{4}}
\put(-0.7,2){\small{5}}
\put(-0.7,7.3){\small{6}}

\end{picture} \quad 
\setlength{\unitlength}{2mm}
\begin{picture}(12,12)
\bigcircle{5}{5}{5}

\put(5,10){\circle*{0.3}}
\put(9.33,7.5){\circle*{0.3}}
\put(9.33,2.5){\circle*{0.3}}
\put(5,0){\circle*{0.3}}
\put(0.67,2.5){\circle*{0.3}}
\put(0.67,7.5){\circle*{0.3}}

\qbezier[500](5,10)(5,5)(5,0)
\qbezier[500](9.33,7.5)(5,5)(9.33,2.5)
\qbezier[500](0.67,2.5)(5,5)(0.67,7.5)

\put(4.7,10.3){\small{1}}
\put(9.7,7.3){\small{2}}
\put(9.7,2){\small{3}}
\put(4.7,-1.5){\small{4}}
\put(-0.7,2){\small{5}}
\put(-0.7,7.3){\small{6}}

\end{picture} \quad
\setlength{\unitlength}{2mm}
\begin{picture}(12,12)
\bigcircle{5}{5}{5}

\put(5,10){\circle*{0.3}}
\put(9.33,7.5){\circle*{0.3}}
\put(9.33,2.5){\circle*{0.3}}
\put(5,0){\circle*{0.3}}
\put(0.67,2.5){\circle*{0.3}}
\put(0.67,7.5){\circle*{0.3}}

\qbezier[500](5,10)(5,5)(0.67,7.5)
\qbezier[500](9.33,7.5)(5,5)(0.67,2.5)
\qbezier[500](9.33,2.5)(5,5)(5,0)

\put(4.7,10.3){\small{1}}
\put(9.7,7.3){\small{2}}
\put(9.7,2){\small{3}}
\put(4.7,-1.5){\small{4}}
\put(-0.7,2){\small{5}}
\put(-0.7,7.3){\small{6}}

\end{picture}
\end{gathered} \right \}$} & \scalebox{0.7}{
$\left \{ \quad \begin{gathered}
\setlength{\unitlength}{2mm}
\begin{picture}(12,12)
\bigcircle{5}{5}{5}

\put(5,10){\circle*{0.3}}
\put(9.33,7.5){\circle*{0.3}}
\put(9.33,2.5){\circle*{0.3}}
\put(5,0){\circle*{0.3}}
\put(0.67,2.5){\circle*{0.3}}
\put(0.67,7.5){\circle*{0.3}}

\qbezier[500](5,10)(5,5)(9.33,7.5)
\qbezier[500](9.33,2.5)(5,5)(5,0)
\qbezier[500](0.67,2.5)(5,5)(0.67,7.5)

\put(4.7,10.3){\small{1}}
\put(9.7,7.3){\small{2}}
\put(9.7,2){\small{3}}
\put(4.7,-1.5){\small{4}}
\put(-0.7,2){\small{5}}
\put(-0.7,7.3){\small{6}}

\end{picture} \quad 
\setlength{\unitlength}{2mm}
\begin{picture}(12,12)
\bigcircle{5}{5}{5}

\put(5,10){\circle*{0.3}}
\put(9.33,7.5){\circle*{0.3}}
\put(9.33,2.5){\circle*{0.3}}
\put(5,0){\circle*{0.3}}
\put(0.67,2.5){\circle*{0.3}}
\put(0.67,7.5){\circle*{0.3}}

\qbezier[500](5,10)(5,5)(0.67,7.5)
\qbezier[500](9.33,7.5)(5,5)(9.33,2.5)
\qbezier[500](5,0)(5,5)(0.67,2.5)

\put(4.7,10.3){\small{1}}
\put(9.7,7.3){\small{2}}
\put(9.7,2){\small{3}}
\put(4.7,-1.5){\small{4}}
\put(-0.7,2){\small{5}}
\put(-0.7,7.3){\small{6}}

\end{picture}
\end{gathered} \right \}$} \vspace{10pt} \\

\left \{ \begin{gathered} $$\young(123,456) \hspace{3pt}, \hspace{5pt} \young(125,346)\hspace{3pt},\hspace{5pt} \young(134,256)$$ \end{gathered} \right \} &
\left \{ \begin{gathered} $$\young(135,246)\hspace{3pt},\hspace{5pt} \young(124,356)$$ \end{gathered} \right \} \vspace{10pt} \\

\left \{ \begin{gathered} \scalebox{0.7}{
$$ \xymatrix @-1.2pc {
& \circ & \\
\ar@{-}[ur] \circ & & \ar@{-}[ul] \circ} \text{ } \xymatrix @-1.2pc {
& \circ & \\
\ar@{-}[ur] \bullet & & \ar@{-}[ul] \bullet} \text{ } \xymatrix @-1.2pc {
& \bullet & \\
\ar@{-}[ur] \bullet & & \ar@{-}[ul] \bullet}$$} \end{gathered} \right \} &
\left \{ \begin{gathered} \scalebox{0.7}{$$ \xymatrix @-1.2pc {
& \circ & \\
\ar@{-}[ur] \bullet & & \ar@{-}[ul] \circ} \text{ } \xymatrix @-1.2pc {
& \circ & \\
\ar@{-}[ur] \circ & & \ar@{-}[ul] \bullet}$$} \end{gathered} \right \} \\
\end{array}$
\end{center}
\caption{The two orbits of noncrossing matchings on six points under rotation, the two orbits of SYT of shape $(3,3)$ under $\pro$, and the two orbits of $J(\Phi^+(A_2))$ under $\row$.}
\label{ex:syt33}
\end{figure}
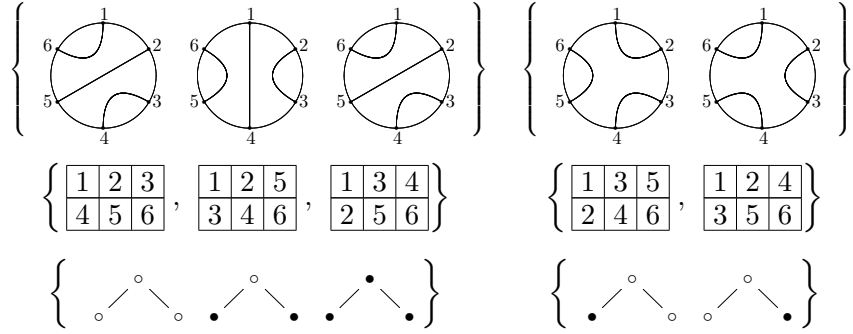

Our main theorem, Theorem~\ref{thm:maintheorem}, gives an equivariant bijection between promotion and rowmotion on the order ideals of any poset with rows and columns (in a sense we will make precise in Section~\ref{sec:rcposets}).  In particular, the result holds for \emph{all} skew SYT with at most two rows, and so we obtain Theorems~\ref{thm:prorowbinomial} and~\ref{thm:prorowcatalan} as special cases.

\section{Machinery}
\label{sec:machinery}

In this section, we develop the machinery needed to prove our main theorem.  We first recall P.\ Cameron and D.~Fon-der-Flaass's permutation group on the order ideals of a poset, which we call the toggle group.  We then define \rcposet{}s, interpret promotion and rowmotion as elements in the toggle group of an \rcposet{}, and show that promotion and rowmotion are conjugate elements in these toggle groups.  The following lemma then specifies an equivariant bijection between the order ideals of \rcposet{}s under promotion and the order ideals of \rcposet{}s under rowmotion.

\begin{lemma}
\label{lem:eq}
	Let $G$ be a group acting on a set $X$, and let $g_1$ and $g_2 = g g_1 g^{-1}$ be conjugate elements in $G$.  Then $x \to gx$ gives an equivariant bijection between $X$ under $\langle g_1 \rangle$ and $X$ under $\langle g_2 \rangle$.
\end{lemma}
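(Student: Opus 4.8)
The plan is to take the map explicitly named in the statement, $\phi : X \to X$ with $\phi(x) = gx$, and verify the two properties required of an equivariant bijection: that $\phi$ is a bijection, and that it intertwines the generator $g_1$ of the first cyclic action with the generator $g_2$ of the second. For cyclic group actions the notion of equivariance reduces to a single condition, namely $\phi(g_1 x) = g_2 \phi(x)$ for all $x \in X$, since a map that respects the chosen generators automatically respects all of their powers and hence the full actions of $\langle g_1 \rangle$ and $\langle g_2 \rangle$.

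For bijectivity I would simply exhibit a two-sided inverse. Because $G$ acts on $X$, the assignment $x \mapsto g^{-1}x$ is defined, and the action axioms give $g^{-1}(gx) = (g^{-1}g)x = x$ and $g(g^{-1}x) = x$. Thus $\phi$ is invertible, with inverse $x \mapsto g^{-1}x$, and in particular a bijection.

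For equivariance I would compute the two sides of $\phi(g_1 x) = g_2 \phi(x)$ directly, using associativity of the action. On the left, $\phi(g_1 x) = g(g_1 x) = (g g_1)x$; on the right, $g_2\phi(x) = g_2(gx) = (g_2 g)x$. The hypothesis $g_2 = g g_1 g^{-1}$ is exactly what makes these agree, since $g_2 g = g g_1 g^{-1} g = g g_1$. This conjugation identity $g_2 g = g g_1$ is the entire content of the lemma, and every other step is bookkeeping.

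Finally I would combine the two properties: being a bijection satisfying $\phi(g_1 x) = g_2\phi(x)$, the map $\phi$ carries each $\langle g_1\rangle$-orbit onto a $\langle g_2\rangle$-orbit and preserves the orbit structure, in particular matching orbit lengths. I expect no genuine obstacle here; the only point requiring care is fixing the convention that equivariance for cyclic actions means intertwining the chosen generators, after which the conjugation relation supplies everything needed.
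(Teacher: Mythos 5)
Your proof is correct and matches the paper's treatment: the paper states this lemma without a formal proof, offering only the commutative diagram $g_2 g = g g_1$, which is precisely the computation you carry out. Your verification that $x \mapsto gx$ is a bijection with inverse $x \mapsto g^{-1}x$ and that the conjugation relation yields the intertwining identity is exactly the intended (routine) argument.
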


This lemma is described by the following commutative diagram.

$$\xymatrix{
X \ar [r]^{g} \ar [d]_{g_1} & X \ar [d]^{g_2=g g_1 g^{-1}} \\
X \ar [r]_{g} & X
}$$

\subsection{The Toggle Group}
\label{sec:togglegroup}

Let $\mathcal{P}$ be a poset and let $J(\mathcal{P})$ be its set of order ideals.  In~\cite{cameron1995orbits}, P.\ Cameron and D.\ Fon-der-Flaass defined a group acting on $J(\mathcal{P})$.

\begin{definition}[P.\ Cameron and D.\ Fon-der-Flaass] \rm
For each $q \in \mathcal{P}$, define $t_p: J(\mathcal{P}) \to J(\mathcal{P})$ to act by toggling $p$ if possible.  That is, if $I \in J(\mathcal{P})$,

$$t_p (I) = \left\{
	\begin{array}{ll}
		I \cup \{p\} & \text{ if } p \notin I \text{ and if } p' < p \text{ then } p' \in I,\\
		I-p & \text{ if } p \in I \text{ and if } p' > p \text{ then } p' \notin I,\\
		I & \text{ otherwise}.\\
	\end{array} \right.
$$
\end{definition}

\begin{definition}[P.\ Cameron and D.\ Fon-der-Flaass] \rm
	The toggle group $T(\mathcal{P})$ of a poset $\mathcal{P}$ is the subgroup of the permutation group $\mathfrak{S}_{J(\mathcal{P})}$ generated by $\{ t_p\}_{p\in \mathcal{P}}$.
\end{definition}

Note that $T(\mathcal{P})$ has the following obvious relations (which do not constitute a full presentation).
\begin{enumerate}
	\item $t_p^2 = 1,$ and
	\item $(t_p t_{p'})^2 = 1$ if $p$ and $p'$ do not have a covering relation.
\end{enumerate}

P.\ Cameron and D.\ Fon-der-Flaass characterized rowmotion as an element of $T(\mathcal{P})$.

\begin{theorem}[P.\ Cameron and D.\ Fon-der-Flaass]
\label{thm:linextrow}
Fix a linear extension $\mathcal{L}$ of $\mathcal{P}$.  Then
$$t_{\mathcal{L}^{-1} (1)} t_{\mathcal{L}^{-1} (2)} \cdots t_{\mathcal{L}^{-1}
(n)}$$ acts as $\row$.
\end{theorem}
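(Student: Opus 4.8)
The plan is to apply the toggles one at a time, track the membership of each poset element in the running order ideal, and show by downward induction on the label that each element is left in exactly its $\row(I)$ state. Reading the composition right-to-left, the product applies $t_{\mathcal{L}^{-1}(n)}$ first and $t_{\mathcal{L}^{-1}(1)}$ last; writing $p_j = \mathcal{L}^{-1}(j)$, this means we toggle the elements in order of decreasing label, $p_n, p_{n-1}, \ldots, p_1$. Because $\mathcal{L}$ is a linear extension, $q > p_j$ in $\mathcal{P}$ forces $\mathcal{L}(q) > j$, so by the time we reach $p_j$ every element above it has already been toggled and every element below it has not. Moreover, since $t_p$ only ever inserts or deletes $p$ itself, the membership of an element is frozen the instant its own toggle is applied; hence it suffices to show that immediately after toggling $p_j$ we have $p_j$ in the running ideal if and only if $p_j \in \row(I)$.

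First I would set up the invariant maintained as $j$ decreases: at the moment $t_{p_j}$ is applied, every already-processed element (label $> j$) lies in the running ideal exactly when it lies in $\row(I)$, while every not-yet-processed element (label $\le j$) lies in the running ideal exactly when it lies in $I$. The second half is automatic from the freezing observation, and the first half is the inductive hypothesis.

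The core is then a two-case analysis of the toggle $t_{p_j}$, using the element-wise description of rowmotion that I would record first: $p \in \row(I)$ if and only if $p \le m$ for some minimal element $m$ of $\mathcal{P} \setminus I$. If $p_j \notin I$, then $t_{p_j}$ can only insert $p_j$, and does so if and only if all $p' < p_j$ lie in $I$ --- i.e.\ if and only if $p_j$ is itself a minimal element of $\mathcal{P}\setminus I$, which (for $p_j \notin I$) is exactly the condition $p_j \in \row(I)$. If $p_j \in I$, then $t_{p_j}$ can only delete $p_j$, and does so if and only if no element $p' > p_j$ lies in the running ideal; by the inductive hypothesis these higher elements already carry their $\row(I)$ values, so $p_j$ survives if and only if some $p' > p_j$ lies in $\row(I)$, which one checks is equivalent to $p_j \in \row(I)$. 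Both directions of this last equivalence follow from the generated-ideal description, since $p_j \in I$ forces $p_j \ne m$ whenever $p_j \le m \notin I$. In either case $p_j$ ends in its $\row(I)$ state, the invariant propagates to $p_{j-1}$, and after $t_{p_1}$ the running ideal is $\row(I)$.

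The step I expect to be the genuine obstacle is the $p_j \in I$ (deletion) case: there the toggle condition looks ``upward,'' so it cannot be read off from $I$ directly and really does rely on the higher elements having already been converted to their $\row(I)$ values. This is precisely why the toggles must be ordered compatibly with a linear extension and why the downward induction is essential; the $p_j \notin I$ (insertion) case, by contrast, is essentially immediate from the definition of a minimal element of the complement. Stating the element-wise characterization of $\row$ and the equivalence ``$\exists\, p' > p_j$ in $\row(I)$'' $\iff$ ``$p_j \in \row(I)$'' cleanly is where the care is needed.
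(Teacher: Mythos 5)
Your proof is correct. The paper does not actually prove this theorem---it is quoted from P.~Cameron and D.~Fon-der-Flaass's paper---so there is no in-paper argument to compare against; your downward induction on the label (toggling $p_n,\dots,p_1$ with the invariant that already-processed elements carry their $\row(I)$ values and unprocessed ones their $I$ values), together with the two-case analysis in which the deletion case genuinely needs the elements above $p_j$ to have been updated already, is the standard proof of this fact, and the one detail that needs care---the equivalence of ``some $p'>p_j$ lies in $\row(I)$'' with ``$p_j\in\row(I)$'' when $p_j\in I$---is handled correctly.
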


\subsection{Rowed-and-Columned Posets}
\label{sec:rcposets}

We now define \rcposet{}s---certain posets with elements that neatly fit into rows and columns and with covering relations allowed only between diagonally adjacent elements.  We will interpret promotion as an action that toggles the columns of order ideals of \rcposet{}s, and rowmotion as an action that toggles the rows.

\begin{definition} \rm
Let $\Pi \subset \mathbb{R}^2$ be the set of points in the integer span of $(2,0)$ and $(1,1)$.  A \emph{rowed-and-columned (rc) poset} $\mathcal{R}$ is a finite poset together with a map $\pi: \mathcal{R} \to \Pi$, where if $p_1,p_2 \in \mathcal{R}$, $p_1$ covers $p_2$, and $\pi(p_1)=(i,j)$, then $\pi(p_2)=(i+1,j-1)$ or $\pi(p_2)=(i-1,j-1)$.  For $p \in \mathcal{R}$, we call $\pi(p) \in \Pi$ the \emph{position} of $p$. 
\end{definition}

Let the \emph{height} $h$ of an \rcposet{} be the maximum number of elements in a single position $(i,j)$.  The $j$th \emph{row} of an \rcposet{} $\mathcal{R}$ is the set of elements of $\mathcal{R}$ in positions $\{(i,j)\}_{i}$.  The $i$th \emph{column} of an \rcposet{} is the set of elements of $\mathcal{R}$ in positions $\{(i,j)\}_j$.  Let $n$ denote the maximal non-empty row and $k$ the maximal non-empty column.  Without loss of generality---purely for ease of notation---we may assume that the \rcposet{} is translated into the first quadrant so that the rows are labeled from $1$ to $n$ and the columns from $1$ to $k$.  For an example, see Figure~\ref{ex:rc}.

\begin{figure}[ht]
\begin{center}

$\begin{gathered}\includegraphics[scale=0.35]{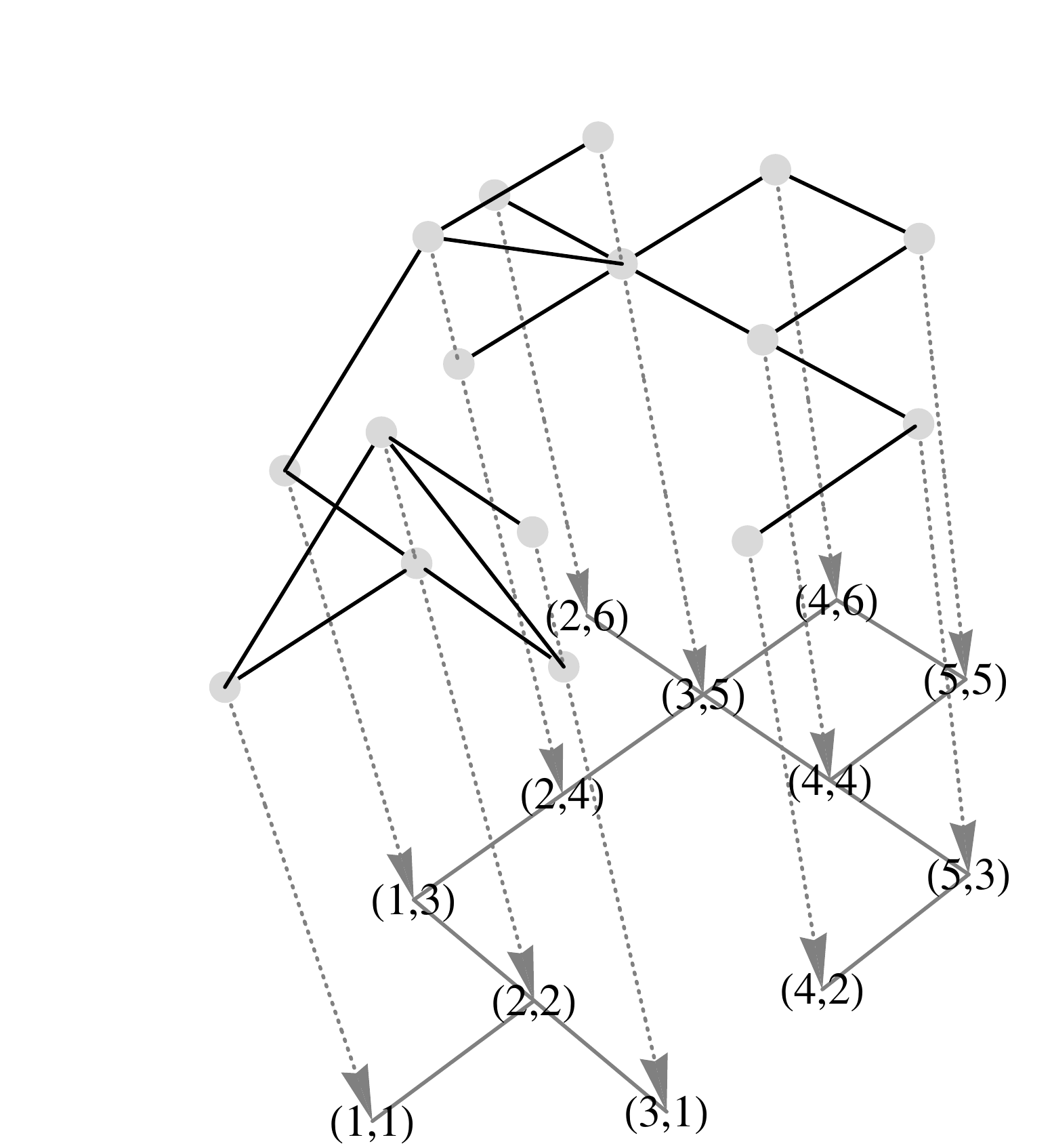}\end{gathered}$
\end{center}
\caption{\small
This figure represents an \rcposet{} with height $h=2$, $k=5$ columns, and $n=6$ rows.  When there are two elements with the same position, the second element is raised.  The position map $\pi$ is indicated by a dotted arrow down.  Covering relations are drawn with solid black lines and are projected down as solid gray lines.}
\label{ex:rc}
\end{figure}

By definition, an element is in a row and column of the same parity.  This is the key to our proof of Theorem~\ref{thm:maintheorem}.

\begin{example} \rm
Other examples of \rcposet{}s of height one are:
\begin{enumerate}
	\item $[n] \times [k]$ (take $\pi((i,j))=(i-j,i+j)$),
	\item $\Phi^+ (A_n)$ (take $\pi(e_i-e_j)=(i+j,j-i)$), and
	\item $\Phi^+ (B_n) \cong \Phi^+ (C_n)$ (for $\Phi^+(B_n)$, take $\pi(e_i-e_j)=(i+j,j-i)$, $\pi(e_i)=(n+1+i,n+1-i)$, and for $i<j$ let $\pi(e_i+e_j)=(2n+2-(j-i),2n+2-(i+j))$).
\end{enumerate}
\end{example}

We will consider certain posets of height one in Section~\ref{sec:othertypes}.  We remark here that $\Phi^+(D_n)$ can be drawn as an \rcposet{} of height two (see Section~\ref{sec:threedim}).

On an \rcposet{} $\mathcal{R}$, promotion can be defined as an action that scans across the \emph{columns} of $\mathcal{R}$, and rowmotion as an action that scans down the \emph{rows}.  By the commutation relations of the toggle group, the order that we take the elements within a row or column does not matter.

\begin{definition} \rm
\label{def:rici}
	If $\mathcal{R}$ is an \rcposet{}, let $r_i = \prod t_p$, where the product is over all elements in row $i$ and let $c_i = \prod t_p$, where the product is over all elements in column $i$.
\end{definition}

Then, since no elements within a row or column of an \rcposet{} share a covering relation, the following relations hold.
\begin{enumerate}
\item $r_i^2 = c_i^2 = 1,$ and
\item if $|i-j|>1$, $(r_i r_j)^2 = (c_i c_j)^2 = 1$.
\end{enumerate}

\subsection{Promotion and Rowmotion in the Toggle Group}
\label{sec:prorowtoggle}

We interpret promotion and rowmotion as elements of the toggle group of an \rcposet{} with $n$ rows and $k$ columns.

\begin{definition} \rm
\label{def:prorowtogg}
	\begin{enumerate}
		\item Given $\nu \in \mathfrak{S}_k$ let $\pro_{\nu} = \prod_{i=1}^k c_{\nu(i)} =  c_{\nu(1)}\cdot c_{\nu(2)}\cdots c_{\nu(k)}$.
		\item Likewise, given $\omega \in \mathfrak{S}_n$ let $\row_{\omega} = \prod_{i=1}^n r_{\omega(i)}$.
\end{enumerate}
\end{definition}

We now specify the element of the toggle group that we will take to act as rowmotion.

\begin{corollary}
	On an \rcposet{}, $\row_{12\ldots n}$ acts as $\row$.
\end{corollary}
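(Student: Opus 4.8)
The plan is to reduce the statement to the Cameron--Fon-der-Flaass characterization of rowmotion (Theorem~\ref{thm:linextrow}) by exhibiting a linear extension of $\mathcal{R}$ whose associated toggle product is precisely $r_1 r_2 \cdots r_n = \row_{12\ldots n}$. The crux is that the row structure of an \rcposet{} is compatible with the partial order in a rigid way, so that ``process the rows from the bottom up'' is itself a linear extension.

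First I would extract two consequences of the defining covering condition. Since $p_1$ covering $p_2$ with $\pi(p_1)=(i,j)$ forces $\pi(p_2)=(i\pm 1, j-1)$, every covering relation strictly decreases the row index as one moves downward. By transitivity it follows that $p < p'$ implies $\row$-index of $p$ is strictly smaller than that of $p'$; in particular, no two distinct elements of a common row are comparable. This yields both facts I need: the toggles $t_p$ for $p$ in a fixed row pairwise commute (they share no covering relation, so relation (2) of the toggle group applies), which makes each $r_i$ well defined independent of internal order; and ordering elements by increasing row index never violates the partial order.

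Next I would construct the linear extension. Let $N = |\mathcal{R}|$ and assign labels $1, 2, \ldots, N$ to the elements so that every element of row $1$ receives a smaller label than every element of row $2$, and so on up through row $n$, breaking ties within a row arbitrarily. By the previous paragraph this map $\mathcal{L}$ is a genuine linear extension: if $p < p'$ then $p$ lies in a strictly lower row than $p'$, whence $\mathcal{L}(p) < \mathcal{L}(p')$. Invoking Theorem~\ref{thm:linextrow} for this $\mathcal{L}$, the product $t_{\mathcal{L}^{-1}(1)} t_{\mathcal{L}^{-1}(2)} \cdots t_{\mathcal{L}^{-1}(N)}$ acts as $\row$. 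By construction this product toggles all of row $1$ first, then all of row $2$, and so on; since the toggles within each row commute, the block of toggles for row $i$ collects into $r_i$ regardless of the internal tie-breaking, giving $r_1 r_2 \cdots r_n = \row_{12\ldots n}$.

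I do not expect a genuine obstacle here; the delicate point is bookkeeping of conventions. I would confirm that the identity permutation $12\ldots n$ corresponds to processing the minimal (lowest-index) rows first, matching the ``minimal-elements-first'' reading of the Cameron--Fon-der-Flaass product, and that the arbitrary within-row tie-breaking is harmless precisely because of the commutation relation established in the first step.
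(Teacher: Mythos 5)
Your proof is correct and is essentially the paper's argument written out in full: the paper disposes of this corollary with ``This follows immediately from Theorem~\ref{thm:linextrow},'' and the details you supply---that covering relations strictly decrease the row index so a row-by-row labeling is a linear extension, and that within-row toggles commute so the blocks collapse to $r_1 r_2 \cdots r_n$---are exactly the implicit justification. No gaps.
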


\begin{proof}
	This follows immediately from Theorem \ref{thm:linextrow}.
\end{proof}

Interpreting promotion as an element of the toggle group takes slightly more work.  Let $\mathcal{P}$ be a Ferrers diagram.  Following R.\ Stanley in~\cite{stanley2009promotion}, we define promotion using the order ideals $J(\mathcal{P})$.  Linear extensions $\mathcal{L}$ can be interpreted as maximal chains $\emptyset = I_0 \subset I_1 \cdots \subset I_n = \mathcal{P}$ in $J(\mathcal{P})$ by taking $\mathcal{L}(p)=i$ if $p$ is the element in the singleton set $I_{i+1} - I_{i}$.

For example,
\tiny
$\young(123,456)$
\normalsize
corresponds to the chain
$\begin{gathered}
\scalebox{0.7}{$\xymatrix @-1.2pc {
 & & & \text{\tiny \young(123,456)} \\
 & & \text{\tiny \young(123,45)} \ar@{=}^-{6}[ur] & \\
& \text{\tiny \young(123,4)}  \ar@{=}^-{5}[ur] & & \text{\supertiny \yng(2,2)}
\ar@{-}[ul] \\
\text{\tiny \young(123)}  \ar@{=}^-{4}[ur] & & \text{\supertiny \yng(2,1)}
\ar@{-}[ul]  \ar@{-}[ur] & \\
 & \text{\tiny \young(12)}  \ar@{=}^-{3}[ul]  \ar@{-}[ur] & & \text{\supertiny
\yng(1,1)}  \ar@{-}[ul] \\
 & & \text{\tiny \young(1)} \ar@{=}^-{2}[ul] \ar@{-}[ur] & \\
 & & & \ar@{=}[ul]^-{1} \emptyset}$}\end{gathered}$
in $J \left(\mbox{\tiny $\yng(3,3)$}
\right)$.

The promotion of $\lambda = (\emptyset = I_0 \subset I_1 \cdots \subset I_n = \mathcal{P})$ is $\tau_{n-1} \cdots \tau_1 \lambda$, where $\tau_i$ acts on a chain by switching $I_i$ to the other order ideal in the interval $[I_{i-1}, I_{i+1}]$, if one exists.  Figure~\ref{ex:max33} illustrates promotion on the maximal chains.

When $\lambda / \mu = (n+k,m) / (k)$ is a skew Ferrers diagram with at most two rows, we can draw the Hasse diagram of $J(\lambda / \mu)$ as an \rcposet{}.  The $i$th step of a maximal chain in $J(\lambda / \mu)$ is taken to be northwest if the corresponding linear extension of $\lambda / \mu$ associates $i$ to an element in the first row, and northeast otherwise.  We take advantage of this planarity with the following definition.

\begin{definition} \rm
\label{def:interior}
	If $\lambda / \mu = (n+k,m) / (k)$ is a skew Ferrers diagram with at most two rows, define the interior $\text{Int}(J(\lambda / \mu))$ to be the \rcposet{} with elements the boxes of $J(\lambda / \mu)$ and covering relations between two elements when their corresponding boxes are adjacent.
\end{definition}

When $\lambda / \mu$ has at most two rows, a maximal chain in $J(\lambda / \mu)$ traces out an order ideal---defined by the boxes to the right of the maximal chain---in $\text{Int}(J(\lambda / \mu))$.

\begin{example} \rm
The Hasse diagram of $J \left(\mbox{\tiny $\yng(3,3)$} \right)$ (with the boxes of the Hasse diagram labeled by $a, b,$ and $c$) is
$\begin{gathered}
\scalebox{0.5}{$
\xymatrix @-1.0pc {
 & & & \circ \\
 & & \circ \ar@{-}[ur] & \\
& \circ  \ar@{-}[ur] & \bf{a} & \circ \ar@{-}[ul] \\
\circ \ar@{-}[ur] & \bf{b} & \circ \ar@{-}[ul]  \ar@{-}[ur] & \\
 & \circ  \ar@{-}[ul]  \ar@{-}[ur] & \bf{c} & \circ  \ar@{-}[ul] \\
 & & \circ \ar@{-}[ul] \ar@{-}[ur] & \\
 & & & \ar@{-}[ul] \circ}$}
\end{gathered}$.
Therefore, $\text{Int}\left(J \left(\mbox{\tiny $\yng(3,3)$} \right) \right)$ is
$\begin{gathered}
\scalebox{0.7}{
$\xymatrix @-1.2pc {
& \bf{b} & \\
\ar@{-}[ur] \bf{c} & & \ar@{-}[ul] \bf{a}}$}
\end{gathered}$.
\end{example}

Figure~\ref{ex:max33} illustrates the bijection from SYT of shape $(3,3)$ to order ideals of $\Phi^+(A_2)$.

\begin{figure}
\begin{center}
$\begin{array}{cc}
\left \{ \begin{gathered} $$\young(123,456) \hspace{3pt}, \hspace{5pt} \young(125,346)\hspace{3pt},\hspace{5pt} \young(134,256)$$ \end{gathered} \right \} &
\left \{ \begin{gathered} $$\young(135,246)\hspace{3pt},\hspace{5pt} \young(124,356)$$ \end{gathered} \right \} \vspace{10pt} \\

\left \{ \begin{gathered}\scalebox{0.6}{$
\xymatrix @-1.0pc {
 & & & \circ \\
 & & \circ \ar@{=}^-{6}[ur] & \\
& \circ  \ar@{=}^-{5}[ur] & & \circ \ar@{-}[ul] \\
\circ \ar@{=}^-{4}[ur] & & \circ \ar@{-}[ul]  \ar@{-}[ur] & \\
 & \circ  \ar@{=}^-{3}[ul]  \ar@{-}[ur] & & \circ  \ar@{-}[ul] \\
 & & \circ \ar@{=}^-{2}[ul] \ar@{-}[ur] & \\
 & & & \ar@{=}^-{1}[ul] \circ}$}
\scalebox{0.6}{$
\xymatrix @-1.0pc {
 & & & \circ \\
 & & \circ \ar@{=}^-{6}[ur] & \\
& \circ  \ar@{-}[ur] & & \circ \ar@{=}^-{5}[ul] \\
\circ \ar@{-}[ur] & & \circ \ar@{-}[ul]  \ar@{=}^-{4}[ur] & \\
 & \circ  \ar@{-}[ul]  \ar@{=}^-{3}[ur] & & \circ  \ar@{-}[ul] \\
 & & \circ \ar@{=}^-{2}[ul] \ar@{-}[ur] & \\
 & & & \ar@{=}^-{1}[ul] \circ}$}
 \scalebox{0.6}{$
\xymatrix @-1.0pc {
 & & & \circ \\
 & & \circ \ar@{=}^-{6}[ur] & \\
& \circ  \ar@{=}^-{5}[ur] & & \circ \ar@{-}[ul] \\
\circ \ar@{-}[ur] & & \circ \ar@{=}^-{4}[ul]  \ar@{-}[ur] & \\
 & \circ  \ar@{-}[ul]  \ar@{-}[ur] & & \circ  \ar@{=}^-{3}[ul] \\
 & & \circ \ar@{-}[ul] \ar@{=}^-{2}[ur] & \\
 & & & \ar@{=}^-{1}[ul] \circ}$}\end{gathered} \right \} &
 
\left \{ \begin{gathered}\scalebox{0.6}{$
\xymatrix @-1.0pc {
 & & & \circ \\
 & & \circ \ar@{=}^-{6}[ur] & \\
& \circ  \ar@{-}[ur] & & \circ \ar@{=}^-{5}[ul] \\
\circ \ar@{-}[ur] & & \circ \ar@{-}[ul]  \ar@{=}^-{4}[ur] & \\
 & \circ  \ar@{-}[ul]  \ar@{-}[ur] & & \circ  \ar@{=}^-{3}[ul] \\
 & & \circ \ar@{-}[ul] \ar@{=}^-{2}[ur] & \\
 & & & \ar@{=}^-{1}[ul] \circ}$}
 \scalebox{0.6}{$
\xymatrix @-1.0pc {
 & & & \circ \\
 & & \circ \ar@{=}^-{6}[ur] & \\
& \circ  \ar@{=}^-{5}[ur] & & \circ \ar@{-}[ul] \\
\circ \ar@{-}[ur] & & \circ \ar@{=}^-{4}[ul]  \ar@{-}[ur] & \\
 & \circ  \ar@{-}[ul]  \ar@{=}^-{3}[ur] & & \circ  \ar@{-}[ul] \\
 & & \circ \ar@{=}^-{2}[ul] \ar@{-}[ur] & \\
 & & & \ar@{=}^-{1}[ul] \circ}$}\end{gathered} \right \} \vspace{10pt} \\

 \left \{ \begin{gathered}\scalebox{0.7}{
$\xymatrix @-1.2pc {
& \bullet & \\
\ar@{-}[ur] \bullet & & \ar@{-}[ul] \bullet}$
$\xymatrix @-1.2pc {
& \circ & \\
\ar@{-}[ur] \bullet & & \ar@{-}[ul] \circ}$
$\xymatrix @-1.2pc {
& \circ & \\
\ar@{-}[ur] \circ & & \ar@{-}[ul] \bullet}$}\end{gathered} \right \} &
\left \{ \begin{gathered}\scalebox{0.7}{$\xymatrix @-1.2pc {
& \circ & \\
\ar@{-}[ur] \circ & & \ar@{-}[ul] \circ}$
$\xymatrix @-1.2pc {
& \circ & \\
\ar@{-}[ur] \bullet & & \ar@{-}[ul] \bullet}$}\end{gathered} \right \} \\

\end{array}$
\end{center}
\caption{The two orbits of SYT of shape $(3,3)$ under promotion, the same two orbits as maximal chains, and the same two orbits as order ideals under $\pro$.}
\label{ex:max33}
\end{figure}
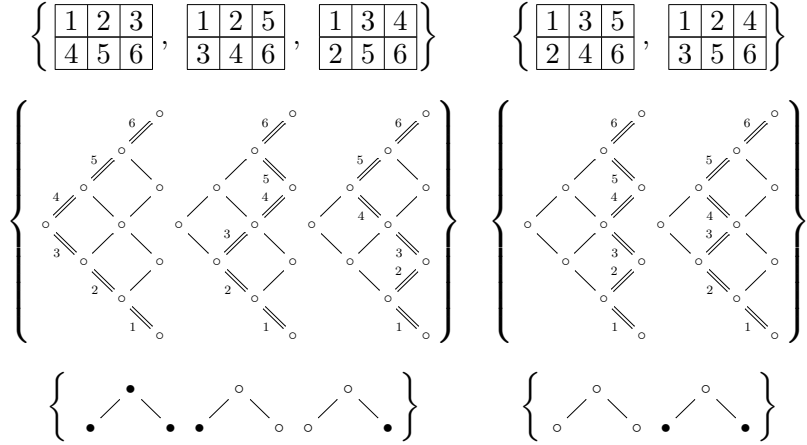

\begin{theorem}
\label{prop:proback}
Let $\mathcal{P}=(n+k,m) / (k)$ be a skew Ferrers diagram with at most two rows.  Then there is an equivariant bijection from $\text{Int}(J(\mathcal{P}))$ under $\pro_{k\ldots21}$ to SYT of shape $\mathcal{P}$ under ordinary promotion.
\end{theorem}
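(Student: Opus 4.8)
The plan is to present the bijection as a composite of standard identifications and then to verify equivariance one generator at a time. Let $N$ be the number of boxes of $\mathcal{P}$. A SYT of shape $\mathcal{P}$ is the same data as a linear extension $\mathcal{L}$, equivalently a maximal chain $\emptyset = I_0 \subset I_1 \subset \cdots \subset I_N = \mathcal{P}$ in $J(\mathcal{P})$. Because $\mathcal{P}$ has at most two rows, such a chain is a lattice path of northeast and northwest steps from $\hat{0}$ to $\hat{1}$ in the planar Hasse diagram of $J(\mathcal{P})$, and the construction preceding Definition~\ref{def:interior} attaches to it the order ideal $\varphi(\mathcal{L}) \in J(\text{Int}(J(\mathcal{P})))$ consisting of the boxes lying to the right of the path. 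Since each order ideal of $\text{Int}(J(\mathcal{P}))$ is cut out by a unique such path, $\varphi$ is a bijection, and the content of the theorem is that $\varphi$ intertwines ordinary promotion with $\pro_{k\ldots21}$, where $k$ is the number of columns of $\text{Int}(J(\mathcal{P}))$.

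I would compare the two actions through their generators: on chains, ordinary promotion is $\tau_{N-1}\tau_{N-2}\cdots\tau_1$, while $\pro_{k\ldots21} = c_k c_{k-1}\cdots c_1$. The first ingredient is geometric. The rc-position of a box of $\text{Int}(J(\mathcal{P}))$ records the center of the corresponding face in the planar Hasse diagram, whose vertical coordinate is the rank. Two boxes joined by a covering relation are adjacent faces, so their centers differ by $1$ in rank while their rc-positions differ by $1$ in column; checking that these two changes have the same sign (this is where the fact that every element lies in a row and column of the same parity is used) shows that the column index equals the center-rank up to a single global additive constant. Hence the columns $1, 2, \ldots, k$ correspond, in increasing order, to a strictly increasing sequence of ranks $r_1 < r_2 < \cdots < r_k$.

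The second ingredient is that, under $\varphi$, the move $\tau_r$ is carried to the toggle of the column at rank $r$. Switching $I_r$ inside $[I_{r-1}, I_{r+1}]$ turns the path at its rank-$r$ vertex, sliding it across the unique face whose bottom and top corners are $I_{r-1}$ and $I_{r+1}$; under $\varphi$ this is exactly the insertion or deletion of that single box from the right-of-path ideal. A column $c_j$ is in general a product of toggles over several boxes of the common rank $r_j$, since the height of $\text{Int}(J(\mathcal{P}))$ may exceed one; but for any given order ideal at most one of these boxes is togglable, because the path meets a single vertex at rank $r_j$ and so has at most one turnable corner there. The remaining factors of $c_j$ therefore act trivially and commute past one another by the toggle relations, so $\varphi\,\tau_{r_j}\,\varphi^{-1} = c_j$, while $\tau_r$ acts as the identity at any rank carrying no box, matching the absence of a corresponding column. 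Composing over $r$ in the order prescribed by promotion and using the monotonicity above to align the order of the ranks with the order $c_1, c_2, \ldots, c_k$ of the columns gives $\varphi\circ\pro = (c_k c_{k-1}\cdots c_1)\circ\varphi = \pro_{k\ldots21}\circ\varphi$, the desired equivariance.

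I expect the main obstacle to be exactly these two alignment statements: that the column index is monotonic in the Hasse rank, so that sweeping $\tau_1, \tau_2, \ldots$ in promotion order really does run through the columns in the order $c_1, c_2, \ldots, c_k$, and that toggling an entire column reproduces the single interval-switch $\tau_r$ even when that column contains more than one box. Everything else is bookkeeping once these are in hand.
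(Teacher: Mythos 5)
Your argument is correct and is essentially an expansion of the paper's one-line proof, which simply appeals to the characterization of promotion as the sequence of interval-switches $\tau_{N-1}\cdots\tau_1$ on maximal chains of $J(\mathcal{P})$: the two alignment facts you isolate (column index equals the Hasse rank of the face center up to a global constant, and each column toggle realizes the single interval-switch at that rank because the chain meets only one vertex per rank) are exactly what the paper leaves implicit. Two small slips in your justifications, neither of which affects the argument: the same-parity fact plays no role in forcing the sign agreement between rank and column (that is just how the rc-embedding of $\text{Int}(J(\mathcal{P}))$ is oriented, consistently with the paper's position maps for $[n]\times[k]$ and $\Phi^+(A_n)$), and a column of $\text{Int}(J(\mathcal{P}))$ can contain several boxes because it meets several \emph{rows}, not because the height exceeds one---for a two-row skew shape $\text{Int}(J(\mathcal{P}))$ always has height one.
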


\begin{proof}
This follows from the characterization of promotion as an action on maximal chains in $J(\mathcal{P})$.
\end{proof}


We now extend the definition of promotion from order ideals of \rcposet{}s that correspond to skew SYT to order ideals of arbitrary \rcposet{}s.

\begin{definition}
Given an rc-poset $\mathcal{R}$ and an order ideal $I\in J(R)$, define the promotion of $I$ to be $\pro(I)=\pro_{k\ldots21}(I)$.
\end{definition}

We also generalize the maximal chains of the above discussion to height one rc-posets by defining \emph{boundary paths}.

\begin{definition} \rm
\label{def:boundarypath}
We define the \emph{boundary path} of an order ideal of a connected \rcposet{} of height one to be the path that separates the order ideal from the rest of the poset.  We encode boundary paths as binary words by writing a $1$ for a northeast step and a $0$ for a southeast step.
\end{definition}

When we start with the poset $\Phi^+(A_n)$ under $\pro$ and map to SYT, we can apply Theorem~\ref{thm:WhiteA} to obtain noncrossing matchings under rotation.  In this language, $i$ is the smaller number in its partition if the $i$th step of the boundary path is northeast.

We apply this idea of boundary paths under $\pro$ to noncrossing objects under rotation in Section~\ref{sec:othertypes}, and generalize it in Section~\ref{sec:threedim}.  In Sections~\ref{sec:threedim} and~\ref{sec:asmtsscpp}, we consider generalizations to the type $D_n$ positive root poset, plane partitions, the ASM poset, and the TSSCPP poset.

\section{The Conjugacy of Promotion and Rowmotion}
\label{sec:conjugate}

We now prove that promotion and rowmotion are conjugate elements in the toggle group of an \rcposet{}.  We spend the rest of the paper applying this theorem.

\begin{lemma}[\cite{humphreys1992reflection}]
\label{lem:symconj}
	Let $G$ be a group 
whose generators
$g_1,\ldots,g_n$ 
satisfy
$g_i^2=1$ and $(g_i g_j)^2 = 1$ if $|i-j|>1$.  Then for any $\omega, \nu \in \mathfrak{S}_n$, $\prod_i g_{\omega(i)}$ and $\prod_i g_{\nu(i)}$ are conjugate.
\end{lemma}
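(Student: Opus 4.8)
The plan is to show that the only two operations naturally available on the product word $g_{a_1}g_{a_2}\cdots g_{a_n}$ — conjugation by its leading factor and the given commutation relations — already suffice to pass between any two orderings of the generators, so that all such products lie in a single conjugacy class. First I would record the two elementary moves. Since $g_i^2=1$, conjugating $g_{a_1}g_{a_2}\cdots g_{a_n}$ by $g_{a_1}$ gives $g_{a_2}\cdots g_{a_n}g_{a_1}$; thus a cyclic rotation of the word changes the element only by conjugation and so preserves its conjugacy class. Second, whenever $|a-b|>1$, the relations $g_a^2=g_b^2=(g_ag_b)^2=1$ force $g_ag_b=g_bg_a$, so two neighbouring letters with non-consecutive indices may be transposed with no change to the element at all. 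It therefore suffices to prove that any permutation $(\omega(1),\dots,\omega(n))$ of $[n]$ can be carried to any other by a sequence of (a) cyclic rotations and (b) transpositions of adjacent entries differing by more than $1$.

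Next I would reformulate this combinatorial connectivity problem in terms of acyclic orientations of the path $P_n$ with vertices $1,\dots,n$ and edges $\{i,i+1\}$. An ordering induces an orientation by directing each edge $\{i,i+1\}$ from the earlier to the later entry. Move (b) leaves this orientation unchanged, and conversely all orderings inducing a fixed orientation are connected by (b)-moves: this is the standard fact that the linear extensions of a poset are connected by adjacent transpositions of incomparable elements, together with the observation that two elements incomparable in the orientation's transitive closure are never consecutive integers, hence differ by more than $1$. The leading entry of an ordering is always a source of the induced orientation, and move (a), which sends it to the end, reverses exactly the edges incident to that vertex, turning the source into a sink — a \emph{source-to-sink flip}. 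Since (b) lets me bring any chosen source to the front, I can flip at any source. Thus the claim reduces to showing that any two acyclic orientations of $P_n$ are connected by source-to-sink flips.

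Finally I would settle this reduced statement by encoding an orientation as a word in $\{L,R\}^{n-1}$ recording each edge's direction. A short check shows that flips at the two endpoints let me freely toggle the first and last letters, while a flip at an interior source or sink performs a local swap $LR\leftrightarrow RL$ at consecutive positions. Aiming for the all-$R$ orientation $1\to2\to\cdots\to n$, I would induct on the number of $L$'s: take the rightmost $L$, slide it to the final position by repeated $LR\to RL$ moves (possible since everything to its right is already $R$), then toggle it at the endpoint, reducing the count. This connects every orientation to the all-$R$ one, completing the chain back to conjugacy of the original products.

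The conceptual content is simply that all Coxeter elements of the type $A$ diagram are conjugate, which is why one may instead cite \cite{humphreys1992reflection}. The genuinely delicate point — and the main obstacle — is the passage through acyclic orientations: verifying that (b)-moves connect precisely the orderings sharing an orientation, and that (a) realizes exactly a source-to-sink flip. This is where the path (more generally, tree) structure is essential, since for graphs containing cycles source-to-sink flips need not connect all acyclic orientations and the analogous conjugacy statement can fail. Everything else is the routine bookkeeping of the two elementary moves.
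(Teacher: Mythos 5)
Your proof is correct, but it takes a genuinely different route from the paper's. The paper argues directly on the word: it repeatedly locates the largest index $k$ whose generator can be commuted to the front, conjugates it to the back, pushes it left again, and terminates via the measure ``either the number of inversions drops, or an inversion $(k,k-1)$ is traded for $(k+1,k)$.'' That hands-on construction is what the paper later mines (in Theorem~\ref{thm:explicit}) to write down the explicit conjugating element $D$ as a product of diagonal toggles. You instead pass through acyclic orientations of the path $P_n$: commutations preserve the induced orientation and connect all linear extensions of its transitive closure (with the key observation that incomparable elements are never consecutive, so the needed adjacent transpositions are legal), cyclic rotation is a source-to-sink flip, and connectivity of orientations under flips is settled by the $\{L,R\}$-word induction. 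Your identification of the moves is accurate — the leading entry really is a source, and the interior flip really is $LR\leftrightarrow RL$ — and the only slight overstatement (``freely toggle'' the first and last letters, when a genuine source-to-sink flip only turns the first letter from $R$ to $L$ and the last from $L$ to $R$) is harmless because conjugacy is symmetric and your induction in fact only uses the legal $L\to R$ flip at the right endpoint. What your approach buys is conceptual clarity and immediate generalization: it is the classical ``all Coxeter elements of a tree are conjugate'' argument, and you correctly flag that the acyclicity/tree structure is where the content lives. What the paper's approach buys is an explicit, algorithmic conjugator, which it needs downstream; your flip sequence could be unwound to produce one, but it is not packaged that way.
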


\begin{proof}
	The proof is constructive.  It suffices to show that $\prod_{j=1}^n g_i$ is conjugate to any $\prod_i g_{\nu(i)}$.  Find the largest number $k$ that can be pushed to the left using the commutation relations, and then conjugate by $k$ and again use the commutation relations to push $k$ as far to the left as possible.  Then either $k$ has been stopped by $k+1$, or $k$ has been stopped by $k-1$.  If it has been stopped by $k-1$, the number of inversions has been decreased by at least 1.  If it has been stopped by $k+1$, then the inversion $(k,k-1)$ has been replaced by $(k+1,k)$.  Thus, with each step either the number of inversions decreases or the numbers in an inversion increase and so this procedure terminates with $\prod_{j=1}^n g_i$.
\end{proof}

\begin{theorem}
\label{thm:maintheorem}
For any \rcposet{} $\mathcal{R}$ and any $\omega \in \mathfrak{S}_n$ and $\nu \in \mathfrak{S}_k$, there is an equivariant bijection between $J(\mathcal{R})$ under $\pro_{\nu}$ and $J(\mathcal{R})$ under $\row_{\omega}$.
\end{theorem}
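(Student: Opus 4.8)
The plan is to show that $\pro_{\nu}$ and $\row_{\omega}$ are conjugate elements of the toggle group $T(\mathcal{R})$, after which the desired equivariant bijection follows immediately from Lemma~\ref{lem:eq} (with $g$ the conjugating element). Both $\pro_{\nu}$ and $\row_{\omega}$ are products of the toggles $t_p$ over \emph{all} $p \in \mathcal{R}$, grouped by column in the first case and by row in the second; the entire difficulty is to reconcile these two groupings, and the parity remark following Figure~\ref{ex:rc} is exactly what makes this possible.

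First I would record the parity fact precisely. Since $\Pi$ is the integer span of $(2,0)$ and $(1,1)$, every position has the form $(2a+b,\,b)$, so its two coordinates are congruent modulo $2$; equivalently, each element of $\mathcal{R}$ lies in a column and a row of the same parity. Two consequences follow. First, any two elements lying in columns of the same parity cannot share a covering relation: by the defining property of an \rcposet{}, covering elements sit in adjacent columns, whose indices differ by $1$, which is impossible for two indices of equal parity. Hence all the toggles $t_p$ with $p$ in an odd column mutually commute, so the product $C_{\mathrm{odd}} = \prod_{i \text{ odd}} c_i$ is a well-defined element of $T(\mathcal{R})$ independent of the order of the factors; likewise for $C_{\mathrm{even}}$, $R_{\mathrm{odd}}$, and $R_{\mathrm{even}}$. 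Second, and crucially, the set of elements in odd columns coincides with the set of elements in odd rows, so $C_{\mathrm{odd}}$ and $R_{\mathrm{odd}}$ are products of the \emph{same} commuting family of toggles and are therefore equal as group elements; similarly $C_{\mathrm{even}} = R_{\mathrm{even}}$.

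With this in hand the conjugacy is a bookkeeping exercise using Lemma~\ref{lem:symconj}. The generators $c_1,\dots,c_k$ satisfy $c_i^2 = 1$ and $(c_i c_j)^2 = 1$ for $|i-j|>1$, exactly the hypotheses of that lemma, so $\pro_{\nu} = \prod_i c_{\nu(i)}$ is conjugate to the particular product $\prod_i c_{\nu'(i)}$ in which the odd-indexed columns are toggled first (in increasing order) and the even-indexed ones afterwards; since equal-parity column toggles commute, this product is precisely $C_{\mathrm{odd}} C_{\mathrm{even}}$. Applying Lemma~\ref{lem:symconj} again to the generators $r_1,\dots,r_n$ shows $\row_{\omega}$ is conjugate to $R_{\mathrm{odd}} R_{\mathrm{even}}$. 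Because $C_{\mathrm{odd}} C_{\mathrm{even}} = R_{\mathrm{odd}} R_{\mathrm{even}}$ by the previous paragraph, transitivity of conjugacy gives that $\pro_{\nu}$ and $\row_{\omega}$ are conjugate in $T(\mathcal{R})$, and Lemma~\ref{lem:eq} then produces the equivariant bijection.

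The main obstacle---really the only genuine idea---is the parity observation of the second paragraph: recognizing that same-parity columns carry no covering relations (so their toggles commute and can be freely amalgamated) and that odd columns and odd rows sweep out literally the same elements (so the two amalgamated products agree on the nose). Everything else is an application of the two lemmas already established, together with the commutation relations listed after Definition~\ref{def:rici}.
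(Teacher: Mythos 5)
Your proof is correct and is essentially the paper's own argument: both reduce, via Lemma~\ref{lem:symconj}, to the column order $135\ldots246\ldots$ and the row order $135\ldots246\ldots$ (your $C_{\mathrm{odd}}C_{\mathrm{even}}$ and $R_{\mathrm{odd}}R_{\mathrm{even}}$), and then use the parity observation that odd (resp.\ even) rows and odd (resp.\ even) columns contain the same commuting family of toggles to conclude these two elements are literally equal. Your write-up just makes the commutation and well-definedness checks more explicit than the paper does.
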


\begin{proof}
Since the row (resp. column) toggles $r_i$ (resp. $c_i$) satisfy the conditions of Lemmas~\ref{lem:symconj} and~\ref{lem:eq}, for any \rcposet{} $\mathcal{R}$ and any $\omega, \nu \in \mathfrak{S}_n$ (resp. $\mathfrak{S}_k$), there is an equivariant bijection between $J(\mathcal{R})$ under $\row_{\omega}$ (resp. $\pro_{\omega}$) and $J(\mathcal{R})$ under $\row_{\nu}$ (resp. $\pro_{\nu}$).

Therefore, we may restrict to considering only $\row_{135\ldots246\ldots}$ and $\pro_{135\ldots246\ldots}$.  But since all $t_p$ with $p$ in an odd (resp. even) column or row commute with one another, and since elements in an odd (resp. even) row are also necessarily in an odd (resp. even) column, we conclude that $\row_{135\ldots246\ldots}$ is equal to $\pro_{135\ldots246\ldots}.$
\end{proof}

We may further ask for an explicit equivariant bijection from rowmotion $\row=\row_{12\ldots n}$ to promotion $\pro=\pro_{k \ldots 21}$.  It turns out to be easier to conjugate $\row^{-1}=\row_{n\ldots21}$ to $\pro_{k \ldots 21}$.

Define the $j$th \emph{diagonal} of an \rcposet{} to be the set of elements in positions $\{(2(j-1)+i,i)\}_i$ that lie in $\mathcal{R}$.  Let $m$ be the maximal non-empty diagonal.

\begin{definition} \rm
	If $\mathcal{R}$ is an \rcposet{}, define $d_j = \prod t_p$, where the product is over all elements in diagonal $j$.  The order within a diagonal does matter, and we specify the order of the elements to be (from left to right) from row with smallest index to row with largest index.
\end{definition}

\begin{theorem}
\label{thm:explicit}
	An equivariant bijection from $J(\mathcal{R})$ under $\row^{-1}=\row_{n \ldots 21}$ to $J(\mathcal{R})$ under $\pro_{k \ldots 21}$ is given by acting on an order ideal by $D = \prod_{i=1}^{m-1} \prod_{j=i}^1 d_j^{-1}.$
\end{theorem}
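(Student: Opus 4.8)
The plan is to reduce the statement to a single identity in the toggle group and then verify that identity by manipulating toggle words. By Lemma~\ref{lem:eq}, taking $g = D$ and $g_1 = \row_{n\ldots 21}$, the map $I \mapsto D(I)$ is automatically an equivariant bijection between $J(\mathcal{R})$ under $\langle \row_{n\ldots 21}\rangle$ and $J(\mathcal{R})$ under $\langle D\,\row_{n\ldots 21}\,D^{-1}\rangle$, so everything comes down to proving the conjugation identity
$$D\,\row_{n\ldots 21}\,D^{-1} = \pro_{k\ldots 21}.$$
The only moves permitted in this verification are the two toggle relations $t_p^2 = 1$ and $t_p t_{p'} = t_{p'} t_p$ when $p,p'$ share no covering relation, since these do not constitute a full presentation of $T(\mathcal{R})$ and no other relations may be assumed.

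First I would record the elementary interactions of the diagonal toggles. Because a covering relation changes the quantity $x-y$ by $0$ or $\pm 2$, two elements on diagonals $j$ and $j'$ with $|j-j'|\ge 2$ never share a covering relation, so $d_j d_{j'} = d_{j'} d_j$; within a single diagonal the only comparable elements are those at positions differing by $(1,1)$, which is exactly why the prescribed left-to-right (smallest to largest row) order in the definition of $d_j$ may not be permuted. With these in hand, the heart of the argument is a local ``rotation'' lemma describing how conjugation by a block $\prod_{j=i}^{1} d_j^{-1}$ acts on the row toggles: each block carries the toggle of an element from the slot it occupies in the row-ordered product $\row_{n\ldots 21}$ toward the slot it occupies in the column-ordered product $\pro_{k\ldots 21}$, the transfer being realized by $t_p^2=1$ cancellations rather than by commuting past comparable elements. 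The global identity then follows by an induction on the number of diagonals $m$ that peels off the blocks of $D=\prod_{i=1}^{m-1}\prod_{j=i}^{1}d_j^{-1}$ one at a time, in the spirit of the constructive proof of Lemma~\ref{lem:symconj}; the case $m=1$ is trivial and the case $m=2$ is already instructive, where $D=d_1^{-1}$ and the single conjugation $d_1^{-1}\,\row_{n\ldots 21}\,d_1$ reduces to $\pro_{k\ldots 21}$ purely by cancelling the repeated toggles introduced at the two ends.

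I expect the main obstacle to be the bookkeeping forced by the non-commutativity within and between adjacent diagonals. Unlike rows and columns---whose constituent toggles pairwise commute---a diagonal contains comparable elements, so the conjugation cannot be carried out by freely reordering toggles; instead one must track the position of every toggle and arrange that each intermediate step is either a permitted commutation or a genuine involutive cancellation, never a commutation of two comparable elements. The bookkeeping is complicated further by the irregular boundary of a general \rcposet{} (which need not be rectangular) and by height greater than one, where a single position may carry several mutually commuting toggles. Verifying that the prescribed order within each $d_j$ is precisely the order that makes every required cancellation line up---so that the telescoping across the nested product defining $D$ terminates exactly at $\pro_{k\ldots 21}$---is the crux of the proof.
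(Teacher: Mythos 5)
Your reduction via Lemma~\ref{lem:eq} to the single identity $D\,\row_{n\ldots 21}\,D^{-1}=\pro_{k\ldots 21}$ matches the paper, as does your observation that $d_j$ and $d_{j'}$ commute when $|j-j'|\ge 2$. But the one observation that turns the paper's proof into a two-line computation is absent from your proposal: both $\row_{n\ldots 21}$ and $\pro_{k\ldots 21}$ factor, using only commutations of toggles of elements sharing no covering relation, as products of the \emph{same} diagonal operators in opposite orders,
$\row_{n\ldots 21}=d_1d_2\cdots d_m$ and $\pro_{k\ldots 21}=d_md_{m-1}\cdots d_1$.
This is where the prescribed internal order of each $d_j$ gets used, once and for all: every covering relation joins consecutive rows and joins either the same or adjacent diagonals, and one checks that the bottom-to-top row sweep of $\row^{-1}$ and the column sweep of $\pro_{k\ldots 21}$ impose the \emph{same} relative order on each such pair, namely the order built into $d_j$. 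Once the two factorizations are in hand, the conjugation identity is verified entirely at the level of the $d_j$: the telescoping of Lemma~\ref{lem:symconj} gives $D(d_1\cdots d_m)D^{-1}=d_m\cdots d_1$ using only the formal cancellations $d_j^{-1}d_j=1$ and the commutations $d_id_j=d_jd_i$ for $|i-j|>1$. No individual toggle $t_p$ is ever tracked, and $d_j^2=1$ (which is false in general, since a diagonal contains comparable elements) is never needed.

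By contrast, you propose to conjugate the row-ordered word directly and carry individual toggles from their slots in $\row_{n\ldots 21}$ to their slots in $\pro_{k\ldots 21}$ via $t_p^2=1$ cancellations, and you explicitly defer the verification that the internal order of each $d_j$ ``makes every required cancellation line up,'' calling it the crux. That deferred step is exactly the content of the missing factorization, so as written the proposal is a plan whose hardest step is unproved rather than a proof. Even your $m=2$ sanity check uses the factorization silently: $d_1^{-1}\,\row_{n\ldots 21}\,d_1=\pro_{k\ldots 21}$ is ``pure cancellation at the two ends'' only after you know $\row_{n\ldots 21}=d_1d_2$ and $\pro_{k\ldots 21}=d_2d_1$. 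State and prove the factorization lemma first, and the toggle-level bookkeeping you are worried about disappears.
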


\begin{proof}

	It is immediate from the commutation relations of toggles in the toggle group that $\row_{n \ldots 21} = \prod_{i=1}^m d_i = d_1 d_2 \cdots d_m$ and $\pro_{k \ldots 21} = \prod_{i=m}^{1} d_i = d_m d_{m-1} \cdots d_1$.  Then the construction in the proof for Lemma~\ref{lem:symconj} gives us the element $D$.

\end{proof}

Note that this theorem implies Theorem~\ref{thm:maintheorem}.  We thank an anonymous referee for a simplification of this proof.  An example of this construction is given in the following commutative diagram.

$$\xymatrix{
 \begin{gathered}\includegraphics[scale=1]{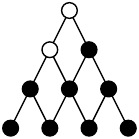}\end{gathered} \ar [r]^{D} \ar [d]_{\row^{-1}} & \begin{gathered}\includegraphics[scale=1]{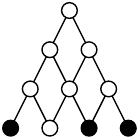}\end{gathered} \ar [d]^{\pro=D \row^{-1} D^{-1}} \\
\begin{gathered}\includegraphics[scale=1]{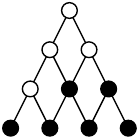}\end{gathered} \ar [r]_{D} & \begin{gathered}\includegraphics[scale=1]{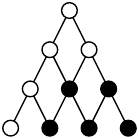}\end{gathered}
}$$

\section{RC-Posets of Height $1$}
\label{sec:othertypes}

While rowmotion on an \rcposet{} can be difficult to analyze, promotion often has a simple equivariant bijection with a known combinatorial object under rotation. We apply Theorem~\ref{thm:maintheorem} over the following two sections to obtain such bijections, from which we obtain cyclic sieving phenomena as corollaries.  

In this section, we investigate the following \rcposet{}s of height one: $[n] \times [k]$, $J([2] \times [n-1])$, $\Phi^+(A_n)$, and $\Phi^+(B_n) \cong \Phi^+(C_n)$.

\subsection{\boldmath $[n]\times[k]$}
\label{sec:nxk}

As a corollary of Theorem~\ref{thm:maintheorem} we obtain a new proof of Theorem~\ref{thm:prorowbinomial}.

\begin{proof}[Proof of Theorem~\ref{thm:prorowbinomial}]
By the reasoning in Section \ref{sec:prorowtoggle}, $\mathcal{L}([n] \oplus [k])$ under $\pro$ is in equivariant bijection with $J([n]\times[k])$ under $\pro$.  The result then follows from Theorem \ref{thm:maintheorem}.
\end{proof}

Since $\mathcal{L}([n] \oplus [k])$ under $\pro$ is in bijection with $\binom{[n+k]}{k}$ under the cycle $(1,2,\ldots,n+k)$, we can restate the theorem using the map from Theorem~\ref{prop:proback}.

\begin{theorem}
\label{thm:square}
There is an equivariant bijection between $I \in J([n] \times [k])$ under $\row$ and binary words of the form $w(I)$ under rotation, where $w(I)=w_1 w_2 \ldots w_{n+k}$ is a binary word of length $n+k$ with $n$ 1's.
\end{theorem}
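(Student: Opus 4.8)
The plan is to obtain Theorem~\ref{thm:square} by composing the equivariant bijection of Theorem~\ref{thm:prorowbinomial} with the standard encoding of linear extensions of $[n]\oplus[k]$ as binary words, so that the only genuinely new content is the identification of the binary word $w(I)$ as the boundary path of $I$ and the verification that promotion acts on it by rotation.

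First I would recall that $[n]\times[k]$ is an \rcposet{} of height one, and that its order ideals are classically in bijection with the lattice paths separating the ideal from its complement in an $n\times k$ box. In the language of Definition~\ref{def:boundarypath}, this bijection sends $I\in J([n]\times[k])$ to its boundary path $w(I)$, a binary word of length $n+k$ recording the $n$ northeast and $k$ southeast steps; so $w(I)$ has exactly $n$ ones, as required. This fixes the underlying bijection of sets, and it remains only to match up the two group actions.

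Next I would establish that promotion transports to rotation under this encoding. Identifying $[n]\times[k]$ with $\text{Int}(J((n+k,k)/(k)))$, a maximal chain of $J((n+k,k)/(k))$ traces out exactly the boundary path $w(I)$ of the corresponding order ideal, and under the interleaving description of $\mathcal{L}([n]\oplus[k])$ the $i$th letter of $w(I)$ records whether $i$ labels the length-$n$ chain or the length-$k$ chain. Thus $w$ is precisely the equivariant bijection of Theorem~\ref{prop:proback} followed by the passage $\mathcal{L}([n]\oplus[k])\leftrightarrow\binom{[n+k]}{k}$ already noted after Theorem~\ref{thm:prorowbinomial}, under which $\pro$ becomes the cyclic shift $(1,2,\ldots,n+k)$, that is, rotation of $w(I)$. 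Hence $w$ is an equivariant bijection between $J([n]\times[k])$ under $\pro$ and binary words under rotation.

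Finally, I would invoke Theorem~\ref{thm:maintheorem} to pass from $\pro$ to $\row$: it supplies an equivariant bijection $\iota\colon J([n]\times[k])\to J([n]\times[k])$ carrying the $\row$-action to the $\pro$-action, and composing $w\circ\iota$ (relabelled $w$) yields the desired equivariant bijection between $J([n]\times[k])$ under $\row$ and binary words under rotation. The step I expect to require the most care is the middle one: confirming that, after the chain of identifications, promotion is the literal one-step rotation of the boundary word rather than some other power or reflection of it. I would pin this down by checking the action of a single $\rho_i$ on an interleaving, since each non-covering swap moves one letter past another without altering the multiset of chain labels, and then tracking the removal and reinsertion of the smallest label, which is exactly a cyclic shift; the case $n=k=2$ of Figure~\ref{ex:example34}, whose orbit sizes $4$ and $2$ match the rotation orbits of $1100$ and $1010$, serves as a consistency check.
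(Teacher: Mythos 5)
Your proposal is correct and follows essentially the same route as the paper: pass from $\row$ to $\pro$ via Theorem~\ref{thm:maintheorem}, then encode the order ideal by its boundary path, on which promotion acts as rotation by Theorem~\ref{prop:proback} together with the identification of $\mathcal{L}([n]\oplus[k])$ with $\binom{[n+k]}{k}$. The paper states this composite bijection in one sentence immediately after the theorem; your extra care in checking that promotion is literally a one-step rotation is a reasonable elaboration of the same argument, not a different approach.
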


The bijection is given by using our bijection from $J([n] \times [k])$ under $\row$ to $J([n] \times [k])$ under $\pro$, and then setting $w_i$ to 1 if the $i$th step of the boundary path is northeast, and to 0 otherwise.

A CSP follows immediately from Theorem~\ref{thm:denni}.  An example is given in Figures~\ref{ex:example34} and~\ref{ex:example342}.

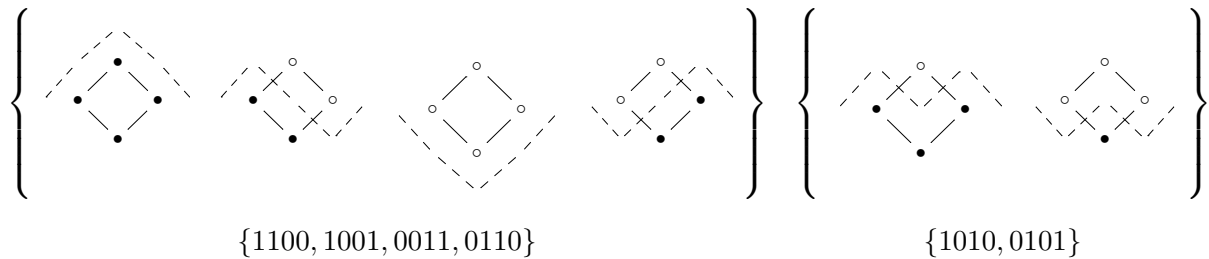
\begin{figure}[ht]
\begin{center}
$\begin{array}{cc}
 \left \{ \begin{gathered}\scalebox{0.7}{$$\xymatrix @-1.2pc {
& & \ar@{--}[dr] & \\
&\ar@{--}[ur] & \bullet &\ar@{--}[dr]  &  \\
\ar@{--}[ur] & \ar@{-}[ur] \bullet & & \ar@{-}[ul] \bullet  &  \\
& & \bullet \ar@{-}[ur] \ar@{-}[ul] & & \\
& & &  & } \text{ } \xymatrix @-1.2pc {
& & & &  \\
& \ar@{--}[dr] & \circ & &  \\
\ar@{--}[ur] & \ar@{-}[ur] \bullet & \ar@{--}[dr] & \ar@{-}[ul] \circ &  \\
& & \bullet \ar@{-}[ur] \ar@{-}[ul] & \ar@{--}[ur] & \\
& & &  & } \text{ } \xymatrix @-1.0pc {
& & &  & \\
& & \circ &  & \\
\ar@{--}[dr] & \ar@{-}[ur] \circ & & \ar@{-}[ul] \circ &  \\
& \ar@{--}[dr] & \circ \ar@{-}[ur] \ar@{-}[ul] & \ar@{--}[ur] & \\
& & \ar@{--}[ur] &  & } \text{ } \xymatrix @-1.2pc {
& & &  & \\
& & \circ & \ar@{--}[dr] & \\
\ar@{--}[dr] & \ar@{-}[ur] \circ & \ar@{--}[ur] & \ar@{-}[ul] \bullet &  \\
& \ar@{--}[ur] & \bullet \ar@{-}[ur] \ar@{-}[ul] &  & \\
& & &  & }$$}\end{gathered} \right \} & \left \{ \begin{gathered}\scalebox{0.7}{$$ \xymatrix @-1.0pc {
& & &  & \\
& \ar@{--}[dr] & \circ & \ar@{--}[dr] &  \\
\ar@{--}[ur] & \ar@{-}[ur] \bullet & \ar@{--}[ur] & \ar@{-}[ul] \bullet &  \\
& & \bullet \ar@{-}[ur] \ar@{-}[ul] & & \\
& & &  & } \text{ } \xymatrix @-1.2pc {
& & & &  \\
& & \circ & &  \\
\ar@{--}[dr] & \ar@{-}[ur] \circ & \ar@{--}[dr] & \ar@{-}[ul] \circ &  \\
& \ar@{--}[ur] & \bullet \ar@{-}[ur] \ar@{-}[ul] & \ar@{--}[ur] & \\
& & &  & }$$}\end{gathered} \right \}  \vspace{10pt}\\
\left \{ 1100, 1001, 0011, 0110 \right \}
 &
 \left \{1010, 0101 \right \}
\end{array}$
\end{center}
\caption{
The two orbits of $J([2] \times [2])$ under $\pro$ (the dashed lines are the boundary paths corresponding to the order ideals) and the two orbits of binary words of length 4 under rotation (obtained from the boundary paths).
}
\label{ex:example342}
\end{figure}

\subsection{\boldmath $J([2] \times [n-1])$}
\label{sec:nxkb}

Observe that $J([2] \times [n-1])$ can be embedded as the left half of $[n] \times [n]$.  It is not hard to see that the map from Theorem~\ref{prop:proback} can be adapted
to these boundary paths.

\begin{theorem}
\label{thm:halfsquare}
There is an equivariant bijection between $I \in J(J([2] \times [n-1]))$ under $\row$ and binary words of the form $w(I) (1-w(I))$ under rotation, where
$w(I)=w_1 w_2 \ldots w_n$ is any binary word of length $n$, and $1-w(I)$ is the word of length $n$ whose $i$th letter equals $1-w_i$.
\end{theorem}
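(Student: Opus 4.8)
The plan is to reduce everything to promotion via the main theorem and then recognize promotion on $\mathcal{R} := J([2]\times[n-1])$ as rotation of a doubled word. First I would record that $\mathcal{R}$ is a height-one \rcposet{}: as a poset it is the planar distributive lattice $\{(a,b): 0\le b\le a\le n-1\}$ ordered componentwise, and the indicated embedding realizes it as the left half of the diamond for $[n]\times[n]$. Since $\mathcal{R}$ is an \rcposet{}, Theorem~\ref{thm:maintheorem} supplies an equivariant bijection between $J(\mathcal{R})$ under $\row$ and $J(\mathcal{R})$ under $\pro$. Thus it suffices to produce an equivariant bijection between $J(\mathcal{R})$ under $\pro$ and the words $w(I)(1-w(I))$ under rotation.

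Next I would set up the boundary-path encoding exactly as in Definition~\ref{def:boundarypath} and Theorem~\ref{thm:square}. To each $I\in J(\mathcal{R})$ I attach the word $w(I)=w_1\cdots w_n$ recording its boundary path in the left-half triangle, writing $1$ for a northeast and $0$ for a southeast step; this map will be shown to be a bijection onto all $2^n$ length-$n$ words. The key geometric step is to complete this half boundary path to a boundary path of the full diamond $[n]\times[n]$: I claim the portion of the completed path lying in the right half is forced, by the reflection identifying the two halves of the diamond, to be the bitwise complement of $w(I)$, so that the full word is $w(I)(1-w(I))$. This is consistent with Theorem~\ref{thm:square}, since $w(1-w)$ always has exactly $n$ ones and is therefore a genuine boundary word of an order ideal of $[n]\times[n]$.

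Finally I would transport promotion across the completion map. The columns of $\mathcal{R}$ are the left-hand columns of the diamond, so $\pro$ on $\mathcal{R}$ is the restriction of $\pro$ on $[n]\times[n]$ to the family of order ideals whose doubled boundary word has the form $w(1-w)$. By Theorem~\ref{thm:square}, promotion on $[n]\times[n]$ is rotation of the length-$2n$ boundary word; and this family is rotation-closed, since rotating $u(1-u)$ by one step yields $u'(1-u')$ with $u'=u_2\cdots u_n(1-u_1)$. Composing the three equivariant bijections---the $\row$-to-$\pro$ bijection on $\mathcal{R}$ from Theorem~\ref{thm:maintheorem}, the completion map $I\mapsto w(I)(1-w(I))$, and rotation of $[n]\times[n]$ boundary words---yields the claimed bijection.

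The main obstacle is the completion step. One must check that the reflection gluing the two halves of the diamond \emph{complements} the bits (producing $1-w(I)$) rather than reverse-complementing them, and that under this identification $\pro$ on $\mathcal{R}$ is honest one-step rotation and not a larger shift; this is exactly the adaptation of the map from Theorem~\ref{prop:proback} that the statement leaves implicit, and it hinges on the northeast/southeast conventions together with the column bookkeeping. I would fix the conventions on the smallest case $n=2$, where $J(J([2]\times[1]))$ is a four-element chain carrying a single $\row$-orbit of size $4$, matching the rotation orbit $\{0011,0110,1100,1001\}=\{w(1-w)\}$, and then argue the general case by the same column-by-column comparison of toggles in the half and in the full diamond.
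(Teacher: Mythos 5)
Your overall strategy matches the paper's: reduce $\row$ to $\pro$ on $J(J([2]\times[n-1]))$ via Theorem~\ref{thm:maintheorem}, encode an order ideal by the binary word of its boundary path, and identify $\pro$ with rotation of the doubled word $w(1-w)$. The paper, however, never passes through $[n]\times[n]$: it works directly with the $n$-step boundary path of the half-poset and asserts that the column toggles move this path exactly as they moved the maximal chains in Theorem~\ref{prop:proback}, so that one application of $\pro$ sends $w$ to $w_2\cdots w_n\overline{w_1}$, which is precisely rotation of the $2n$-letter word $w(1-w)$. Your detour through the full square, designed to leverage Theorem~\ref{thm:square}, is where the gap sits.

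The gap is the intertwining claim that ``$\pro$ on $\mathcal{R}$ is the restriction of $\pro$ on $[n]\times[n]$'' under the completion map $I\mapsto w(I)(1-w(I))$. This is not a restriction in any literal sense: realizing $\mathcal{R}=\{(i,j)\in[n]\times[n]: i\ge j\}$, each central-column element $(j,j)$ acquires in $[n]\times[n]$ an extra upper cover $(j,j+1)$ and an extra lower cover $(j-1,j)$ lying in the right half, so the toggles of columns $1,\ldots,n$ computed in $T([n]\times[n])$ impose strictly stronger add/remove conditions than the corresponding toggles in $T(\mathcal{R})$; moreover $\pro$ on $[n]\times[n]$ goes on to toggle columns $n+1,\ldots,2n-1$, and you must show the result lands back in the image of the completion map with the correct left half. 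You correctly flag this as ``the main obstacle,'' but verifying $n=2$ and invoking an unexecuted ``column-by-column comparison of toggles'' does not discharge it---and that comparison is essentially the entire content of the theorem once Theorem~\ref{thm:maintheorem} has been applied. A secondary problem: your justification that the appended half of the word is forced ``by the reflection identifying the two halves of the diamond'' cannot work as stated, because the reflection of $[n]\times[n]$ across its vertical axis sends a boundary word to its \emph{reverse} complement, not its complement (as you yourself note); the form $w(1-w)$ needs a different explanation. The more economical route, and the one the paper intends, is to analyze the $n$-step boundary path of $\mathcal{R}$ itself: the toggles of columns $1,\ldots,n-1$ act on it exactly as in Theorem~\ref{prop:proback}, while the central column---whose elements see only left-half neighbors---allows the path's endpoint to slide on the right edge, and it is this boundary effect that one must show produces the wrapped, complemented letter making $w(1-w)$ rotate.
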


Again, we first use our bijection from $J(J([2] \times [n-1]))$ under $\row$ to $J(J([2] \times [n-1]))$ under $\pro$, and then set $w_i$ equal to 1 if the
$i$th step of the boundary path is northeast, and 0 otherwise. 
Promotion acts on these boundary paths in exactly the same way as it did on the maximal chains in Theorem~\ref{prop:proback}.
 This theorem is illustrated for the case $n=3$ in Figure~\ref{ex:halfsquare}.

\begin{figure}[ht]
\begin{center}
$\begin{array}{cc}
\left \{ \begin{gathered}\scalebox{0.6}{$
\xymatrix @-1.0pc {
 & & & \\
& & & \circ \\
& & \circ  \ar@{-}[ur] & \\
\ar@{--}[dr] & \circ  \ar@{-}[ur] & & \circ  \ar@{-}[ul] \\
 & \ar@{--}[dr] & \circ \ar@{-}[ul] \ar@{-}[ur] & \\
 & & \ar@{--}[dr] & \ar@{-}[ul] \circ \\
 & & &}$
 $\xymatrix @-1.0pc {
  & & & \\
& & & \circ \\
& & \circ  \ar@{-}[ur] & \\
\ar@{--}[dr] & \circ  \ar@{-}[ur] & & \circ  \ar@{-}[ul] \\
 & \ar@{--}[dr] & \circ \ar@{-}[ul] \ar@{-}[ur] & \\
 & & \ar@{--}[ur]& \ar@{-}[ul] \bullet \\
 & & &}$
 $\xymatrix @-1.0pc {
  & & & \\
& & & \circ \\
& & \circ  \ar@{-}[ur] & \\
\ar@{--}[dr] & \circ  \ar@{-}[ur] & \ar@{--}[ur] & \bullet  \ar@{-}[ul] \\
 & \ar@{--}[ur] & \bullet \ar@{-}[ul] \ar@{-}[ur] & \\
 & & & \ar@{-}[ul] \bullet \\
 & & &}$
 $\xymatrix @-1.0pc {
 & & & \\
& & \ar@{--}[ur] & \bullet \\
& \ar@{--}[ur] & \bullet  \ar@{-}[ur] & \\
\ar@{--}[ur] & \bullet  \ar@{-}[ur] & & \bullet  \ar@{-}[ul] \\
 & & \bullet \ar@{-}[ul] \ar@{-}[ur] & \\
 & & & \ar@{-}[ul] \bullet \\
 & & &}$
 $\xymatrix @-1.0pc {
  & & & \\
& & \ar@{--}[dr] & \circ \\
& \ar@{--}[ur] & \bullet  \ar@{-}[ur] & \\
 \ar@{--}[ur] & \bullet  \ar@{-}[ur] & & \bullet  \ar@{-}[ul] \\
 & & \bullet \ar@{-}[ul] \ar@{-}[ur] & \\
 & & & \ar@{-}[ul] \bullet \\
 & & &}$
 $\xymatrix @-1.0pc {
  & & & \\
& & & \circ \\
& \ar@{--}[dr] & \circ  \ar@{-}[ur] & \\
\ar@{--}[ur] & \bullet  \ar@{-}[ur] & \ar@{--}[dr] & \circ  \ar@{-}[ul] \\
 & & \bullet \ar@{-}[ul] \ar@{-}[ur] & \\
 & & & \ar@{-}[ul] \bullet \\
 & & &}$}\end{gathered} \right \} & \left \{ \begin{gathered}\scalebox{0.6}{$
\xymatrix @-1.0pc {
 & & & \\
& & & \circ \\
& & \circ  \ar@{-}[ur] & \\
\ar@{--}[dr] & \circ  \ar@{-}[ur] & \ar@{--}[dr] & \circ  \ar@{-}[ul] \\
 & \ar@{--}[ur] & \bullet \ar@{-}[ul] \ar@{-}[ur] & \\
 & & & \ar@{-}[ul] \bullet \\
 & & &}$
 $\xymatrix @-1.0pc {
  & & & \\
& & & \circ \\
& \ar@{--}[dr] & \circ  \ar@{-}[ur] & \\
\ar@{--}[ur] & \bullet  \ar@{-}[ur] & \ar@{--}[ur]& \bullet  \ar@{-}[ul] \\
 & & \bullet \ar@{-}[ul] \ar@{-}[ur] & \\
 & & & \ar@{-}[ul] \bullet \\
 & & &}$}\end{gathered} \right \}
 \vspace{10pt} \\
\left \{ 000111, 001110, 011100, 111000, 110001, 100011 \right \}
 &
 \left \{010101, 101010 \right \} \\

\end{array}$
\end{center}
\caption{The two orbits of $J(J([2] \times [2]))$ under $\pro$ (the dashed lines are the boundary paths corresponding to the order ideals) and the two orbits of binary words of length 6 of the form $w(1-w)$ under rotation (obtained from the boundary paths).}
\label{ex:halfsquare}
\end{figure}
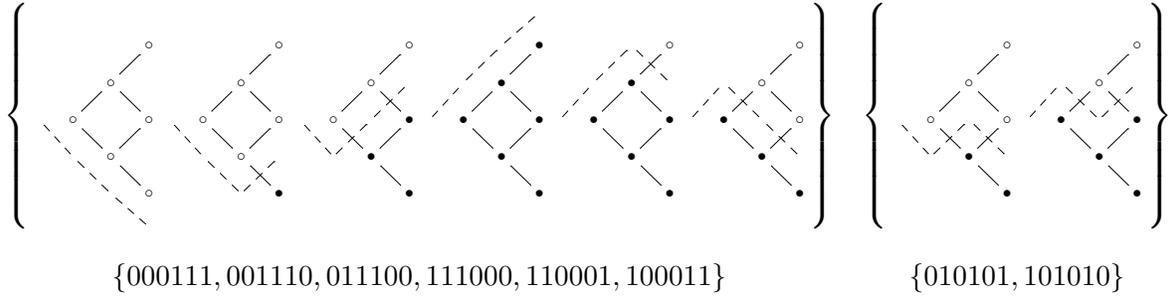

The set $X$ of binary words of the form $w (1-w)$, where $w$ is a binary word of length $n$, exhibits the CSP under rotation with the polynomial $\prod_{i=1}^n [2]_{q^i}$.

\begin{corollary}
Let $C_{2n}$ act on $J(J([2] \times [n-1]))$ by $\row$.  Then $\left(J(J([2] \times [n-1])),\prod_{i=1}^n [2]_{q^i},C_{2n}\right)$ exhibits the CSP.
\end{corollary}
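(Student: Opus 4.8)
The plan is to deduce the corollary in two stages: first transport the cyclic-sieving question across the equivariant bijection of Theorem~\ref{thm:halfsquare}, and then establish the cyclic sieving phenomenon directly on the concrete model of binary words. Since Theorem~\ref{thm:halfsquare} provides an equivariant bijection between $J(J([2]\times[n-1]))$ under $\row$ and the set $X$ of words $w(1-w)$ (with $w$ of length $n$) under rotation, the two triples $(J(J([2]\times[n-1])),\prod_{i=1}^n[2]_{q^i},C_{2n})$ and $(X,\prod_{i=1}^n[2]_{q^i},C_{2n})$ carry identical combinatorial data: one application of $\row$ matches rotation by one step of a word of length $2n$, so $C_{2n}=\langle\row\rangle$ corresponds to the cyclic group generated by the one-step rotation $\sigma$. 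Thus it suffices to prove the CSP for $(X,\prod_{i=1}^n(1+q^i),C_{2n})$, which carries the entire mathematical content.

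For the CSP on $X$, I would fix the generator $\sigma$ (rotation by one) and, for each $j$, write $e=\gcd(j,2n)$, so that $\sigma^j$ fixes exactly those $u\in X$ with $u_{i+e}=u_i$ for all $i$. The key structural observation is that $u\in X$ means $u_{i+n}=1-u_i$, so on a word of period $e$ this condition reduces to $u_{i+s}=1-u_i$ with $s=n\bmod e$. A short parity analysis then splits into two cases. If $e\mid n$ then $s=0$ and the condition $u_i=1-u_i$ is unsatisfiable, giving no fixed points; one checks that $e\mid n$ is equivalent to $d:=2n/e$ being even. If $e\nmid n$ then $d$ is odd, $d\mid n$, and $s=e/2$, so a fixed word is freely determined by its first $e/2$ entries, yielding exactly $2^{e/2}=2^{n/d}$ fixed points.

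To finish I would match these counts against the evaluation $X(\omega(\sigma^j))=\prod_{i=1}^n(1+\eta^i)$, where $\eta$ is the primitive $d$-th root of unity $\omega(\sigma^j)$. When $d$ is even, $d/2=n/e$ lies in $\{1,\dots,n\}$ and contributes a factor $1+\eta^{d/2}=1+(-1)=0$, so the product vanishes, matching the zero fixed-point count. When $d$ is odd with $d\mid n$, the exponents $i=1,\dots,n$ run over every $d$-th root of unity exactly $n/d$ times, so $\prod_{i=1}^n(1+\eta^i)=\bigl(\prod_{\zeta^d=1}(1+\zeta)\bigr)^{n/d}$; evaluating the identity $\prod_{\zeta^d=1}(y-\zeta)=y^d-1$ at $y=-1$ (using that $d$ is odd) gives $\prod_{\zeta^d=1}(1+\zeta)=2$, hence $X(\eta)=2^{n/d}$, again matching the fixed-point count.

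I expect the main obstacle to be this last matching step: carrying out the case split cleanly (the equivalence of ``$e\mid n$'' with the parity of $d$) and performing the root-of-unity evaluation so that the algebraic value $X(\eta)$ and the combinatorial count of $\sigma^j$-fixed words agree exactly in both cases. The reduction via Theorem~\ref{thm:halfsquare} is comparatively routine; the heart of the argument is the elementary but careful verification that $\prod_{i=1}^n(1+q^i)$ computes precisely the numbers of rotation-fixed words of the form $w(1-w)$.
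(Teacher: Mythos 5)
Your proposal is correct and follows the same route as the paper: reduce via the equivariant bijection of Theorem~\ref{thm:halfsquare} to binary words of the form $w(1-w)$ under rotation, then invoke the CSP for that word model with the polynomial $\prod_{i=1}^n[2]_{q^i}$. The only difference is that the paper simply asserts the CSP for these words, whereas you carry out the fixed-point count and root-of-unity evaluation explicitly; your case analysis ($d$ even gives a vanishing factor $1+\eta^{d/2}$ and no fixed words, $d$ odd gives $2^{n/d}$ on both sides) checks out.
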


We comment that the bijections in Sections~\ref{sec:nxk} and~\ref{sec:nxkb} are essentially the type $A$ and $B$ bijections given in~\cite{RushShiREU}, which benefited from an early draft of this paper to find a lovely uniform generalization of these two results to minuscule posets.

\subsection{\boldmath $\Phi^+(A_n)$}
\label{roottypea}

We remind the reader that we deal with the root posets of classical type case-by-case, losing the generality and uniformity of the main theorem in~\cite{Armstrong2011}.
Using D.\ White's equivariant bijection between $\mathcal{L}([2]\times[n+1])$ and noncrossing matchings in Theorem~\ref{thm:WhiteA}, we obtain the type $A_n$ case of Theorem~\ref{thm:armstrongetal}.

\begin{proof}[Proof of Theorem~\ref{thm:prorowcatalan}]
By the reasoning in Section~\ref{sec:prorowtoggle}, $\mathcal{L}([2]\times[n+1])$ under $\pro$ is in equivariant bijection with $J(\Phi^+(A_n))$ under $\pro$.  The result then follows from Theorem~\ref{thm:maintheorem}.
\end{proof}

An example is given in Figures~\ref{ex:syt33} and~\ref{ex:max33}.

\subsection{\boldmath$\Phi^+(B_n)$}
\label{roottypeb}

The type $B_n$ case of Theorem~\ref{thm:armstrongetal} also follows from a modification of the map in Theorem~\ref{prop:proback}, since $B_n$ noncrossing matchings are just the half-turn symmetric $A_{2n-1}$ matchings.

\begin{corollary}
There is an equivariant bijection between type $B_n$ noncrossing matchings under rotation and $J(\Phi^+ (B_n))$ under $\row$.
\end{corollary}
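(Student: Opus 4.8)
The goal is to establish an equivariant bijection between type $B_n$ noncrossing matchings under rotation and $J(\Phi^+(B_n))$ under $\row$. The plan is to mirror the structure of the type $A_n$ argument (Theorem~\ref{thm:prorowcatalan}) while exploiting the stated fact that $B_n$ noncrossing matchings are precisely the half-turn symmetric $A_{2n-1}$ noncrossing matchings. Since $\Phi^+(B_n)$ was already shown to be an \rcposet{} of height one (item (3) of the Example following Definition of \rcposet{}s), Theorem~\ref{thm:maintheorem} immediately supplies an equivariant bijection between $J(\Phi^+(B_n))$ under $\row$ and $J(\Phi^+(B_n))$ under $\pro$. Thus the real content is to identify $J(\Phi^+(B_n))$ under $\pro$ with type $B_n$ noncrossing matchings under rotation.

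First I would assign to each order ideal $I \in J(\Phi^+(B_n))$ its boundary path, using Definition~\ref{def:boundarypath}, encoded as a binary word by writing $1$ for each northeast step and $0$ for each southeast step. By the same reasoning used in Theorem~\ref{prop:proback} and in the type $A$ case, $\pro = \pro_{k\ldots21}$ acts on this boundary word by cyclic rotation. The key structural observation is that the \rcposet{} embedding of $\Phi^+(B_n)$ is symmetric about its central vertical axis (as visible in Figure~\ref{ex:rootposets} for $B_3$), so the boundary path of any order ideal is a word $w$ of length $2n$ that is invariant under the appropriate half-turn symmetry; equivalently, reading the boundary path identifies these words with exactly the binary encodings of half-turn symmetric $A_{2n-1}$ noncrossing matchings. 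Applying D.~White's bijection (Theorem~\ref{thm:WhiteA}) in its symmetric form, or directly using the correspondence between boundary words and arcs, sends these words to $A_{2n-1}$ noncrossing matchings, and the half-turn symmetry of the poset forces the resulting matching to be half-turn symmetric — hence a type $B_n$ noncrossing matching.

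The steps, in order, are: (i) realize $\Phi^+(B_n)$ as a height-one \rcposet{} and invoke Theorem~\ref{thm:maintheorem} to reduce $\row$ to $\pro$; (ii) read off boundary paths and verify that $\pro$ acts as rotation on the boundary word, just as in Theorem~\ref{prop:proback}; (iii) check that the central symmetry of the \rcposet{} translates into half-turn symmetry of the boundary word and therefore of the associated $A_{2n-1}$ matching; and (iv) conclude that the composite map carries $J(\Phi^+(B_n))$ under $\row$ to type $B_n$ noncrossing matchings under rotation, equivariantly. The main obstacle I expect is step (iii): one must argue carefully that the central-axis symmetry of the \rcposet{} position map $\pi$ really does correspond to the half-turn symmetry on matchings, matching up the rotation action consistently on both sides so that a single clockwise rotation of the matching corresponds to one application of $\pro$. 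Once the symmetry is pinned down, equivariance is automatic from Theorem~\ref{thm:maintheorem} and the rotation-equals-promotion fact, and the bijectivity follows since all maps in the chain are bijections.
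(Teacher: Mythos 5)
Your proposal is correct and follows essentially the same route as the paper, which disposes of the type $B_n$ case in a single sentence: reduce $\row$ to $\pro$ via Theorem~\ref{thm:maintheorem}, read off boundary paths as in the map of Theorem~\ref{prop:proback}, and use the identification of type $B_n$ noncrossing matchings with half-turn symmetric $A_{2n-1}$ matchings. The only wording to tighten is in your step (iii): the length-$2n$ boundary word itself is not ``invariant'' under a symmetry but rather \emph{extends} by the half-turn symmetry to a matching on $4n$ points, exactly as illustrated in Figure~\ref{ex:b2noncross}.
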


Figure~\ref{ex:b2noncross} illustrates this theorem for $n=2$.

\begin{figure}[ht]
\begin{center}
$\begin{array}{cc}

\left \{ \begin{gathered}\scalebox{0.7}{
$ \xymatrix @-1.2pc {
& & & & \\
& & & & \circ \\
& & \ar@{--}[dr] & \circ \ar@{-}[ur]  & \\
& \ar@{--}[ur] & \bullet \ar@{-}[ur]  & \ar@{--}[dr] & \circ \ar@{-}[ul] \\
\ar@{--}[ur] & & & & \\}$
$ \xymatrix @-1.2pc {
& & & & \\
& & & & \circ \\
& & & \circ \ar@{-}[ur]  & \\
& \ar@{--}[dr] & \circ \ar@{-}[ur]  &\ar@{--}[ur] & \bullet \ar@{-}[ul] \\
\ar@{--}[ur] & & \ar@{--}[ur] & & \\}$
$ \xymatrix @-1.2pc {
& & & & \\
& & & \ar@{--}[ur] & \bullet \\
& & \ar@{--}[ur] & \bullet \ar@{-}[ur]  & \\
& \ar@{--}[ur] & \bullet \ar@{-}[ur]  & & \bullet \ar@{-}[ul] \\
\ar@{--}[ur] & & & & \\}$
$ \xymatrix @-1.2pc {
& & & & \\
& & & \ar@{--}[dr] & \circ \\
& & \ar@{--}[ur] & \bullet \ar@{-}[ur]  & \\
& \ar@{--}[ur] & \bullet \ar@{-}[ur]  & & \bullet \ar@{-}[ul] \\
\ar@{--}[ur] & & & & \\}$
}\end{gathered} \right \}
&
\left \{ \begin{gathered}\scalebox{0.7}{
$ \xymatrix @-1.2pc {
& & & & \\
& & & & \circ \\
& & & \circ \ar@{-}[ur]  & \\
& \ar@{--}[dr] & \circ \ar@{-}[ur]  & \ar@{--}[dr] & \circ \ar@{-}[ul] \\
\ar@{--}[ur] & & \ar@{--}[ur] & & \\}$
$ \xymatrix @-1.2pc {
& & & & \\
& & & & \circ \\
& & \ar@{--}[dr] & \circ \ar@{-}[ur]  & \\
& \ar@{--}[ur] & \bullet \ar@{-}[ur]  & \ar@{--}[ur] & \bullet \ar@{-}[ul] \\
\ar@{--}[ur] & & & & \\}$}\end{gathered} \right \}
\vspace{10pt} \\
\scalebox{0.7}{
$\left \{ \quad \begin{gathered}
\setlength{\unitlength}{2mm}
\begin{picture}(12,12)
\bigcircle{5}{5}{5}

\put(5,10){\circle*{0.3}}
\put(8.53,8.53){\circle*{0.3}}
\put(10,5){\circle*{0.3}}
\put(8.53,1.46){\circle*{0.3}}
\put(5,0){\circle*{0.3}}
\put(1.46,1.46){\circle*{0.3}}
\put(0,5){\circle*{0.3}}
\put(1.46,8.53){\circle*{0.3}}

\qbezier[500](5,10)(5,5)(8.53,1.46)
\qbezier[500](10,5)(5,5)(8.53,8.53)
\qbezier[500](0,5)(5,5)(1.46,1.46)
\qbezier[500](5,0)(5,5)(1.46,8.53)

\put(4.7,10.3){\small{1}}
\put(8.7,8.7){\small{2}}
\put(10.3,4.7){\small{3}}
\put(8.7,0.6){\small{4}}
\put(4.7,-1){\small{-1}}
\put(-0.3,0.6){\small{-2}}
\put(-1.8,4.7){\small{-3}}
\put(-0.3,8.7){\small{-4}}

\end{picture} \qquad
\setlength{\unitlength}{2mm}
\begin{picture}(12,12)

\bigcircle{5}{5}{5}

\put(5,10){\circle*{0.3}}
\put(8.53,8.53){\circle*{0.3}}
\put(10,5){\circle*{0.3}}
\put(8.53,1.46){\circle*{0.3}}
\put(5,0){\circle*{0.3}}
\put(1.46,1.46){\circle*{0.3}}
\put(0,5){\circle*{0.3}}
\put(1.46,8.53){\circle*{0.3}}

\qbezier[500](5,10)(5,5)(8.53,8.53)
\qbezier[500](10,5)(5,5)(1.46,8.53)
\qbezier[500](5,0)(5,5)(1.46,1.46)
\qbezier[500](0,5)(5,5)(8.53,1.46)

\put(4.7,10.3){\small{1}}
\put(8.7,8.7){\small{2}}
\put(10.3,4.7){\small{3}}
\put(8.7,0.6){\small{4}}
\put(4.7,-1){\small{-1}}
\put(-0.3,0.6){\small{-2}}
\put(-1.8,4.7){\small{-3}}
\put(-0.3,8.7){\small{-4}}

\end{picture} \qquad
\setlength{\unitlength}{2mm}
\begin{picture}(12,12)

\bigcircle{5}{5}{5}

\put(5,10){\circle*{0.3}}
\put(8.53,8.53){\circle*{0.3}}
\put(10,5){\circle*{0.3}}
\put(8.53,1.46){\circle*{0.3}}
\put(5,0){\circle*{0.3}}
\put(1.46,1.46){\circle*{0.3}}
\put(0,5){\circle*{0.3}}
\put(1.46,8.53){\circle*{0.3}}

\qbezier[500](5,10)(5,5)(1.46,8.53)
\qbezier[500](8.53,8.53)(5,5)(0,5)
\qbezier[500](10,5)(5,5)(1.46,1.46)
\qbezier[500](5,0)(5,5)(8.53,1.46)

\put(4.7,10.3){\small{1}}
\put(8.7,8.7){\small{2}}
\put(10.3,4.7){\small{3}}
\put(8.7,0.6){\small{4}}
\put(4.7,-1){\small{-1}}
\put(-0.3,0.6){\small{-2}}
\put(-1.8,4.7){\small{-3}}
\put(-0.3,8.7){\small{-4}}

\end{picture} \qquad
\setlength{\unitlength}{2mm}
\begin{picture}(12,12)

\bigcircle{5}{5}{5}

\put(5,10){\circle*{0.3}}
\put(8.53,8.53){\circle*{0.3}}
\put(10,5){\circle*{0.3}}
\put(8.53,1.46){\circle*{0.3}}
\put(5,0){\circle*{0.3}}
\put(1.46,1.46){\circle*{0.3}}
\put(0,5){\circle*{0.3}}
\put(1.46,8.53){\circle*{0.3}}

\qbezier[500](5,10)(5,5)(1.46,1.46)
\qbezier[500](8.53,8.53)(5,5)(5,0)
\qbezier[500](10,5)(5,5)(8.53,1.46)
\qbezier[500](0,5)(5,5)(1.46,8.53)

\put(4.7,10.3){\small{1}}
\put(8.7,8.7){\small{2}}
\put(10.3,4.7){\small{3}}
\put(8.7,0.6){\small{4}}
\put(4.7,-1){\small{-1}}
\put(-0.3,0.6){\small{-2}}
\put(-1.8,4.7){\small{-3}}
\put(-0.3,8.7){\small{-4}}

\end{picture}
\end{gathered} \right \}$}
&

\scalebox{0.7}{
$\left \{ \quad \begin{gathered} \setlength{\unitlength}{2mm}
\begin{picture}(12,12)

\bigcircle{5}{5}{5}

\put(5,10){\circle*{0.3}}
\put(8.53,8.53){\circle*{0.3}}
\put(10,5){\circle*{0.3}}
\put(8.53,1.46){\circle*{0.3}}
\put(5,0){\circle*{0.3}}
\put(1.46,1.46){\circle*{0.3}}
\put(0,5){\circle*{0.3}}
\put(1.46,8.53){\circle*{0.3}}

\qbezier[500](5,10)(5,5)(8.53,8.53)
\qbezier[500](0,5)(5,5)(1.46,8.53)
\qbezier[500](5,0)(5,5)(1.46,1.46)
\qbezier[500](10,5)(5,5)(8.53,1.46)

\put(4.7,10.3){\small{1}}
\put(8.7,8.7){\small{2}}
\put(10.3,4.7){\small{3}}
\put(8.7,0.6){\small{4}}
\put(4.7,-1){\small{-1}}
\put(-0.3,0.6){\small{-2}}
\put(-1.8,4.7){\small{-3}}
\put(-0.3,8.7){\small{-4}}

\end{picture} \qquad
\setlength{\unitlength}{2mm}
\begin{picture}(12,12)

\bigcircle{5}{5}{5}

\put(5,10){\circle*{0.3}}
\put(8.53,8.53){\circle*{0.3}}
\put(10,5){\circle*{0.3}}
\put(8.53,1.46){\circle*{0.3}}
\put(5,0){\circle*{0.3}}
\put(1.46,1.46){\circle*{0.3}}
\put(0,5){\circle*{0.3}}
\put(1.46,8.53){\circle*{0.3}}

\qbezier[500](10,5)(5,5)(8.53,8.53)
\qbezier[500](5,10)(5,5)(1.46,8.53)
\qbezier[500](0,5)(5,5)(1.46,1.46)
\qbezier[500](5,0)(5,5)(8.53,1.46)

\put(4.7,10.3){\small{1}}
\put(8.7,8.7){\small{2}}
\put(10.3,4.7){\small{3}}
\put(8.7,0.6){\small{4}}
\put(4.7,-1){\small{-1}}
\put(-0.3,0.6){\small{-2}}
\put(-1.8,4.7){\small{-3}}
\put(-0.3,8.7){\small{-4}}

\end{picture} \end{gathered} \right \}$}
\\
\end{array}$
\end{center}
\caption{The two orbits of $J(\Phi^+(B_2))$ under $\pro$ (the dashed lines are the boundary paths corresponding to the order ideals) and the two orbits of type $B_2$ noncrossing matchings under rotation (obtained from the boundary path by taking $i$ to be the smaller element of its block if the $i$th step was northeast).}
\label{ex:b2noncross}
\end{figure}
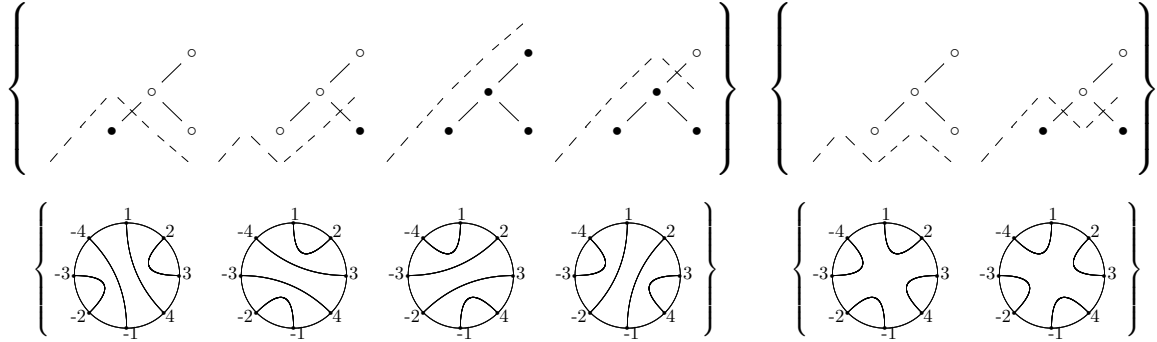

\section{RC-Posets of Height Greater than $1$}
\label{sec:threedim}

In this section, we apply Theorem~\ref{thm:maintheorem} to the following \rcposet{}s of height greater than one:  $\Phi^+(D_n)$ and $[\ell]\times[m]\times[n]$. In the case of $[2]\times[m]\times[n]$, we obtain a bijection which turns $\row$ into a rotation on noncrossing partitions of $[n+m+1]$ into $m+1$ blocks. 

\subsection{\boldmath $\Phi^+(D_n)$}

The poset $\Phi^+(D_n)$ is a copy of $\Phi^+(A_{n-1})$ joined with $J([2] \times [n-2])$ (see Figure~\ref{ex:rootposets}).  We choose to draw $\Phi^+(D_n)$ as an \rcposet{} of height 2 by letting the elements $e_i-e_n$ and $e_i+e_n$ occupy the same positions.  Then---ignoring elements and edges between elements in same position---$\Phi^+(D_n)$ looks exactly like $\Phi^+(B_n)$.   To define the position map, we let $\pi(e_i-e_j)=(i+j,j-i)$ and $\pi(e_i+e_j)=(2n-(j-i),2n-(i+j))$ for $i<j$.  For example, $D_4$ is drawn this way in Figure~\ref{ex:d4rc}.

\begin{figure}[ht]
\begin{center}
\includegraphics[scale=0.5]{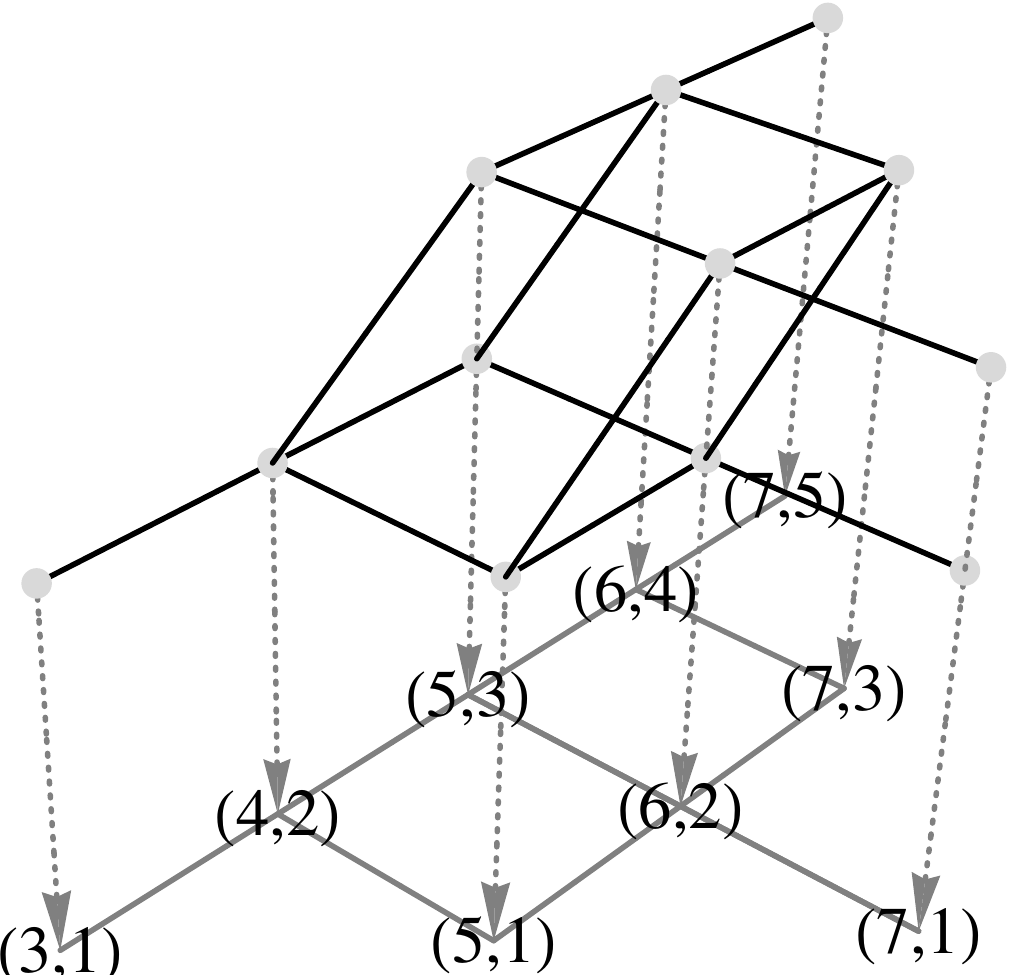}
\end{center}
\caption{$\Phi^+(D_4)$ drawn as an \rcposet{} of height 2.}
\label{ex:d4rc}
\end{figure}

As a corollary of Theorem~\ref{thm:maintheorem} we obtain the following.

\begin{corollary}
\label{thm:typeD}
There is an equivariant bijection between $J(\Phi^+(D_n))$ under $\row$ and $J(\Phi^+(D_n))$ under $\pro$.
\end{corollary}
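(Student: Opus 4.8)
The plan is to treat this corollary as a direct instance of Theorem~\ref{thm:maintheorem}: once we confirm that $\Phi^+(D_n)$, equipped with the position map $\pi$ described immediately above, really does satisfy the definition of an \rcposet{}, nothing further is needed. Since $\pro$ and $\row$ on $J(\Phi^+(D_n))$ are by definition the toggle-group elements $\pro_{k\ldots21}$ and $\row_{12\ldots n}$ (where $k$ and $n$ here denote the numbers of columns and rows of the \rcposet{}, not the Lie rank), applying Theorem~\ref{thm:maintheorem} with the reversal permutation for $\pro$ and the identity permutation for $\row$ yields exactly the asserted equivariant bijection $x \mapsto gx$. Thus all the content lies in verifying the \rcposet{} axioms for $\pi$; no new toggle-group argument is required.

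First I would check that $\pi$ lands in $\Pi$, which is the integer span of $(2,0)$ and $(1,1)$, i.e.\ the set of $(x,y)$ with $x \equiv y \pmod 2$. For $i < j$, both $\pi(e_i - e_j) = (i+j,\,j-i)$ and $\pi(e_i + e_j) = (2n - (j-i),\,2n - (i+j))$ have coordinate difference equal to $2i$, so each lies in $\Pi$.

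The heart of the matter is the covering condition: whenever $\beta$ covers $\alpha$ in $\Phi^+(D_n)$, one must have $\pi(\alpha) = \pi(\beta) + (\pm 1, -1)$. Covering relations correspond to adding a simple root ($\alpha_i = e_i - e_{i+1}$ for $i < n$, and $\alpha_n = e_{n-1} + e_n$), so there are three families to examine: covers internal to the $e_i - e_j$ roots, covers internal to the $e_i + e_j$ roots, and the crossover covers produced by $\alpha_n$, which link $e_i - e_n$ up to $e_i + e_{n-1}$ and $e_i - e_{n-1}$ up to $e_i + e_n$. For the minus-family the required shift is a direct computation; for the plus-family the reflection $2n - (\cdot)$ built into $\pi$ interchanges the two diagonal directions but still produces a $(\pm 1, -1)$ shift; and for the crossover one checks, for instance, that $\pi(e_i - e_n) - \pi(e_i + e_{n-1}) = (-1,-1)$. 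I expect this case analysis—in particular keeping the signs straight through the $2n - (\cdot)$ reflection and correctly handling the two crossover edges at $\alpha_n$—to be the main obstacle, although each individual verification is routine.

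Finally I would record the height claim. The only positions occupied by two roots are the pairs $\{e_i - e_n,\, e_i + e_n\}$, since $\pi(e_i - e_n) = \pi(e_i + e_n) = (n+i,\,n-i)$ while all remaining roots receive pairwise distinct positions; moreover these two roots are incomparable, as their difference $2e_n$ is not a nonnegative sum of positive roots of $D_n$. Hence no position carries three elements, the height is exactly $2$, and there is no covering edge between same-position elements. With the \rcposet{} structure thereby established, Theorem~\ref{thm:maintheorem} applies verbatim and delivers the equivariant bijection between $J(\Phi^+(D_n))$ under $\row$ and $J(\Phi^+(D_n))$ under $\pro$, completing the proof.
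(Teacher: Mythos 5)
Your proposal is correct and follows exactly the paper's route: the paper simply exhibits the position map $\pi(e_i-e_j)=(i+j,j-i)$, $\pi(e_i+e_j)=(2n-(j-i),2n-(i+j))$, asserts that this realizes $\Phi^+(D_n)$ as an \rcposet{} of height $2$ (with only $e_i-e_n$ and $e_i+e_n$ sharing positions), and invokes Theorem~\ref{thm:maintheorem}. Your verification of the parity, covering-shift, and height conditions just makes explicit the routine checks the paper leaves to the reader, and all of your computations are correct.
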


Recall that type $D_n$ noncrossing matchings are defined to be half-turn symmetric noncrossing matchings satisfying a certain parity condition on $4n-4$ points around a large circle and $4$ points around a smaller interior circle~\cite{athanasiadis2005noncrossing}.  Rotation on these is defined by rotating the inner and outer circles in opposite directions.

Let $I \in J(\Phi^+(D_n))$ have the property that $e_i + e_n \in J$ if and only if $e_i - e_n \in J$.  Then $\pro(I)$ also has this property, so that the
entire orbit is mirrored by the corresponding orbit in $J(\Phi^+(B_n))$ obtained by identifying $e_i + e_n$ and $e_i - e_n$.  Thus, such order ideals are in bijection with those type $D_n$ noncrossing matchings that have no matchings between the outer vertices and the four inner vertices.  The general case was solved in~\cite{Armstrong2011}, and the second author has found a new approach that will appear in a subsequent paper.

Conjugating promotion to rowmotion, we comment that our bijections from Sections~\ref{roottypea} and~\ref{roottypeb} for types $A$ and $B$ and our partial result in this section for type $D$ must then be the same as those found in~\cite{Armstrong2011} by the uniqueness result in Section 5.4 of that paper.

\subsection{Plane Partitions}
\label{sec:ppsec}

In this section, consider the order ideals of the product of three chains---that is, plane partitions---under rowmotion.  We draw $[\ell]\times[m]\times[n]$ as an \rcposet{} of height $\ell$ to generalize the approach in Theorem~\ref{prop:proback}. When $\ell=2$, we prove Theorem~\ref{thm:ncpp}, which gives an equivariant bijection between $J([2]\times[m]\times[n])$ under rowmotion and noncrossing partitions of $[n+m+1]$ into $m+1$ blocks under rotation, simplifying proofs of Theorem~\ref{thm:pporder} due to P.\ Cameron and D.\ Fon-der-Flaass~\cite{cameron1995orbits}, as well as Corollary~\ref{cor:vicconj} of D.\ B Rush and X.\ Shi~\cite{RushShiREU}.

We interpret $[\ell]\times[m]\times[n]$ as an \rcposet{} by drawing each layer $\{i\}\times[m]\times[n]$ for $1\leq i \leq \ell$ as an \rcposet{} and then letting $\pi(i,j,k)=(i-j+k,i+j+k)$.  For example, see Figure~\ref{ex:ppex}.

\begin{figure}[ht]
\begin{center}
$\begin{array}{cc}

\begin{gathered}\includegraphics[scale=0.5]{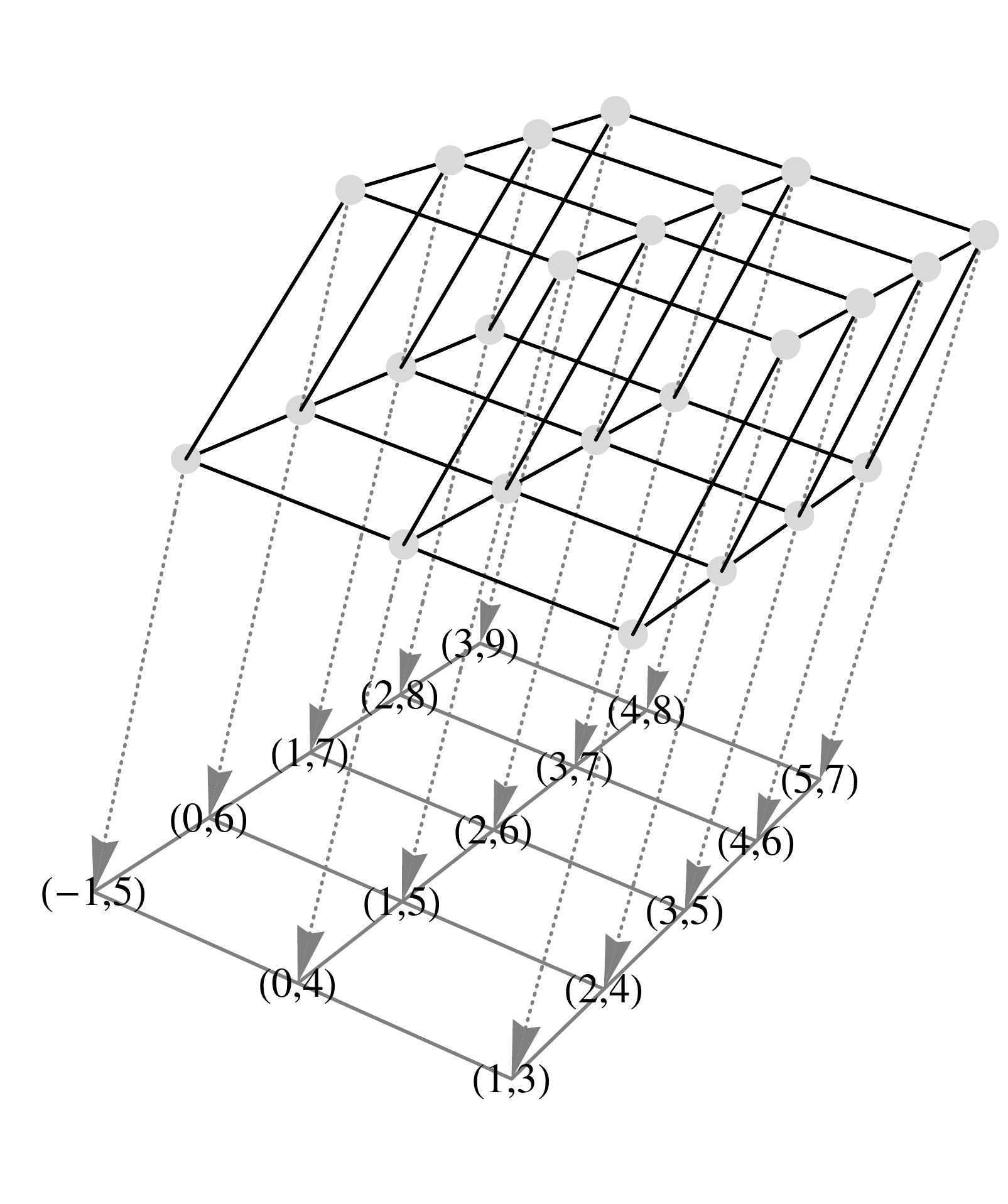}\end{gathered} &
\begin{gathered}\includegraphics[scale=0.5]{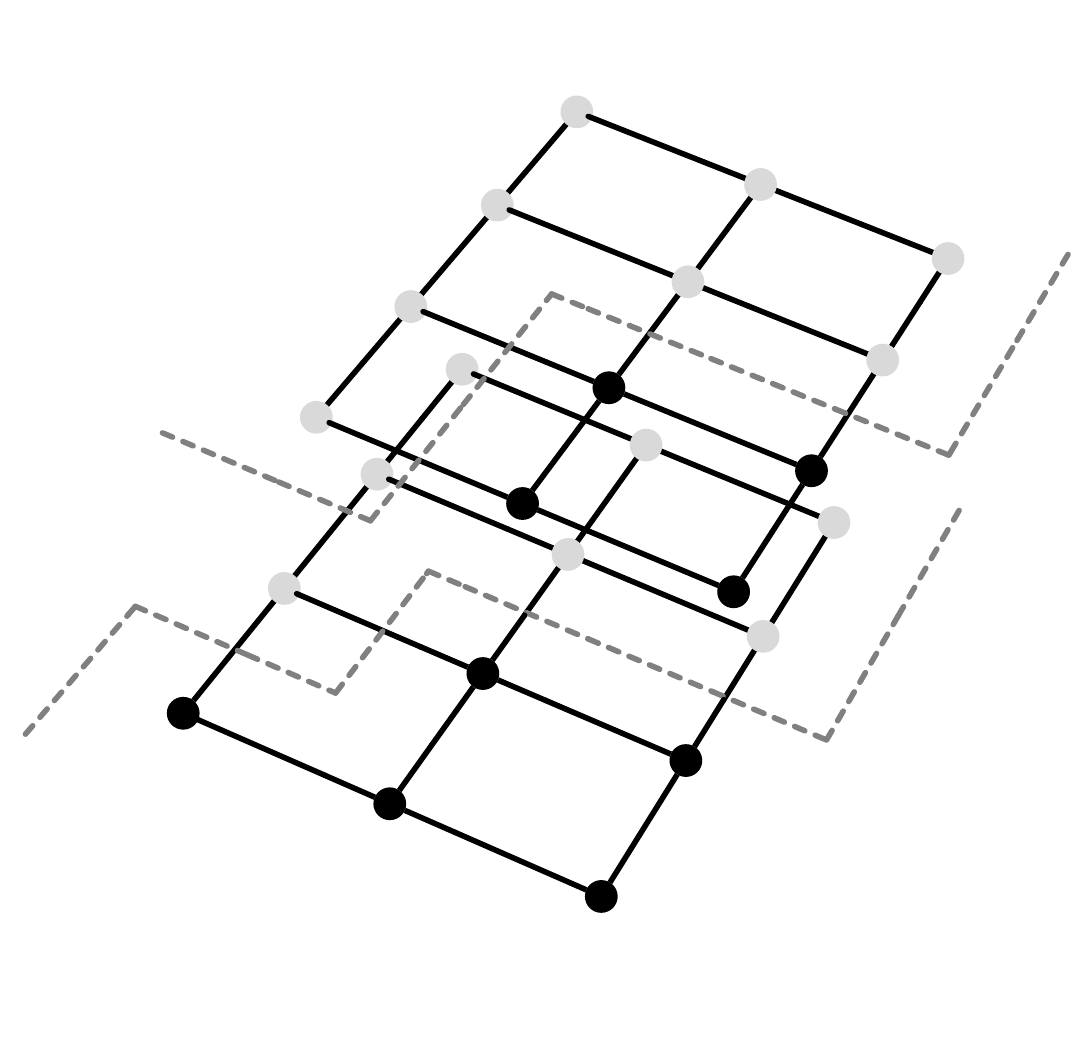}\end{gathered} \\
\end{array}$
\end{center}

\caption{On the left is $[2]\times[3]\times[4]$ drawn as an \rcposet{} of height 2.  When there are two elements with the same position, the second element is raised; the position is indicated by a dotted arrow down.  Covering relations are drawn with solid black lines, and are projected down as solid gray lines.  On the right are the order ideal and boundary paths corresponding to the rightmost plane partition in
Figure~\ref{ex:pppro} (covering relations between layers are suppressed).}
\label{ex:ppex}
\end{figure}

As usual, we immediately obtain the following corollary of Theorem~\ref{thm:maintheorem}.

\begin{corollary}
\label{thm:pp}
There is an equivariant bijection between $J([\ell]\times[m]\times[n])$ under
$\row$ and $J([\ell]\times[m]\times[n])$ under $\pro$.
\end{corollary}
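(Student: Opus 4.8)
The plan is to recognize this as an immediate consequence of Theorem~\ref{thm:maintheorem}. Once we confirm that $[\ell]\times[m]\times[n]$, together with the position map $\pi(i,j,k)=(i-j+k,i+j+k)$ introduced just above, is a genuine \rcposet{}, the main theorem hands us an equivariant bijection between $J([\ell]\times[m]\times[n])$ under $\pro_{\nu}$ and under $\row_{\omega}$ for every permutation $\nu$ of the columns and every permutation $\omega$ of the rows. Specializing $\nu$ and $\omega$ to the orders defining $\pro$ and $\row$ then yields the corollary.

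So the only real work is checking the two \rcposet{} axioms for $\pi$. First I would verify that every position lies in $\Pi$: since $(i+j+k)-(i-j+k)=2j$ is even, the two coordinates of $\pi(i,j,k)$ always have the same parity, which is exactly membership in the integer span of $(2,0)$ and $(1,1)$. Second I would check the covering condition. The covering relations of $[\ell]\times[m]\times[n]$ are precisely those in which one coordinate drops by one, and a direct computation shows that decreasing $i$ or $k$ shifts $\pi$ by $(-1,-1)$ while decreasing $j$ shifts it by $(+1,-1)$. In either case the covered element sits at a diagonally adjacent position of the form $(a-1,b-1)$ or $(a+1,b-1)$, exactly as the definition demands.

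I do not expect a genuine obstacle, since Theorem~\ref{thm:maintheorem} does all the heavy lifting. The one point worth flagging is that this \rcposet{} has height greater than one: decreasing $i$ and decreasing $k$ both land on the same position, so distinct poset elements routinely share a position. This is precisely the situation the height parameter $h$ was introduced to accommodate, and the proof of Theorem~\ref{thm:maintheorem} used only the parity observation---that an element's row and column have equal parity because its position lies in $\Pi$---together with the toggle commutation relations, both of which are insensitive to height. Hence the main theorem applies verbatim and the corollary follows.
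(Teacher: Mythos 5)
Your proposal is correct and takes essentially the same route as the paper: the paper introduces the position map $\pi(i,j,k)=(i-j+k,i+j+k)$ in the preceding paragraph and then states that the corollary follows immediately from Theorem~\ref{thm:maintheorem}, leaving the verification that this makes $[\ell]\times[m]\times[n]$ an \rcposet{} implicit. Your explicit checks of the parity and covering conditions, and your remark that the height of the \rcposet{} plays no role in the main theorem, are exactly the details the paper suppresses.
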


Figure~\ref{ex:pppro} displays an orbit of $J([2]\times[3]\times[4])$ under promotion (drawn
using code written by J.\ S.\ Kim for Ti\textit{k}Z).

\begin{figure}[ht]
\begin{center}
$\left \{ \begin{gathered}\scalebox{0.35}{
\begin{tikzpicture}
\planepartition{{2,1,1,1},{2,1}}
\end{tikzpicture}
\begin{tikzpicture}
\planepartition{{2,2,2,2},{2,1,1,1},{2,1}}
\end{tikzpicture}
\begin{tikzpicture}
\planepartition{{2,2,2,1},{1,1,1},{1}}
\end{tikzpicture}
\begin{tikzpicture}
\planepartition{{2,2,1},{2,2}}
\end{tikzpicture}
\begin{tikzpicture}
\planepartition{{2,2,2,2},{2,2,1},{2,2}}
\end{tikzpicture}
\begin{tikzpicture}
\planepartition{{2,2,2,1},{2,1,1,1},{2,1,1}}
\end{tikzpicture}
\begin{tikzpicture}
\planepartition{{2,2,1},{1,1,1},{1,1}}
\end{tikzpicture}
\begin{tikzpicture}
\planepartition{{2,2},{2,2},{1}}
\end{tikzpicture}
}\end{gathered} \right \}.$
\end{center}
\caption{An orbit of $J([2]\times[3]\times[4])$ under promotion.}
\label{ex:pppro}
\end{figure}
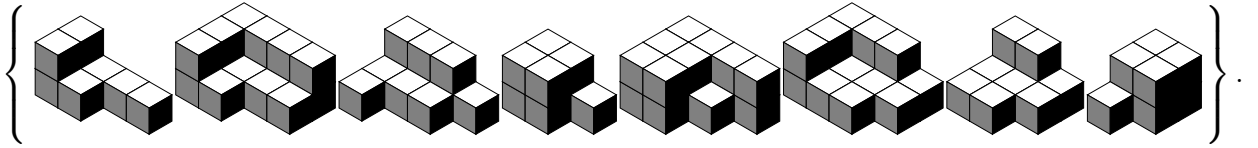

We now extend the boundary paths of Definition~\ref{def:boundarypath} to $J([\ell]\times[m]\times[n])$.

\begin{definition} \rm
\label{def:boundmx}
Let $I\in J([\ell]\times[m]\times[n])$.  We define its \emph{boundary path matrix} $B(I)=\{b_{i,j}\}$ to be the $\ell\times (m+n+\ell-1)$ matrix with row $i$ containing the boundary path of layer $\{i\}\times[m]\times[n]$, preceded by $i-1$ zeros and succeeded by $\ell-i$ zeros.
\end{definition}

Note that the rows of the boundary path matrix each sum to $n$.  Because of the covering relations of $[\ell]\times[m]\times[n]$, they also satisfy the condition:
\[
\mbox{if } \sum_{j=1}^k b_{i,j} = \sum_{j=1}^k b_{i+1,j}, \mbox{ then }
b_{i+1,j+1} \neq 1.
\]

From our characterization of $\pro$ as an action on the columns of an \rcposet{}, it is clear that promotion traces from left to right through the columns of the boundary path matrix, swapping each pair of entries in adjacent columns and the same row that result in a matrix still satisfying the condition above.  Figure~\ref{ex:ppprob} translates the order ideals of Figure~\ref{ex:pppro} to boundary path matrices.

\begin{figure}[ht]
\begin{center}
$\left \{ \begin{gathered} \scalebox{0.8}{$\left(\begin{array}{cccccccc}
0&1&1&0&1&1&0&0\\
0&0&1&0&0&1&1&1
\end{array}\right)
\left(\begin{array}{cccccccc}
1&1&0&1&1&0&0&0\\
0&1&0&0&1&1&1&0
\end{array}\right)
\left(\begin{array}{cccccccc}
1&0&1&1&0&1&0&0\\
0&0&0&1&1&1&0&1
\end{array}\right)
\left(\begin{array}{cccccccc}
0&1&1&0&1&0&1&0\\
0&0&1&1&0&0&1&1
\end{array}\right) $}\end{gathered} \right.$
\end{center}

\begin{center}
$\left. \begin{gathered} \scalebox{0.8}{$
\left(\begin{array}{cccccccc}
1&1&0&1&0&1&0&0\\
0&1&1&0&0&1&1&0
\end{array}\right)
\left(\begin{array}{cccccccc}
1&1&1&0&1&0&0&0\\
0&1&0&0&1&1&0&1
\end{array}\right)
\left(\begin{array}{cccccccc}
1&1&0&1&0&0&1&0\\
0&0&0&1&1&0&1&1
\end{array}\right)
\left(\begin{array}{cccccccc}
1&0&1&0&0&1&1&0\\
0&0&1&1&0&0&1&1
\end{array}\right)$} \end{gathered} \right \} .$
\end{center}
\caption{The boundary path matrices of the order ideals in
Figure~\ref{ex:pppro}.}
\label{ex:ppprob}
\end{figure}

Using boundary path matrices, we can easily determine a factor of the order of rowmotion on plane partitions of general height.

\begin{theorem}
\label{thm:genpporder}
$J([\ell]\times[m]\times[n])$ under $\row$ has order divisible by $m+n+\ell-1$.
\end{theorem}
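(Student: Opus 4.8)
The plan is to transfer the statement to promotion and then exhibit a single orbit of the required length. By Corollary~\ref{thm:pp} the order of $\row$ on $J([\ell]\times[m]\times[n])$ equals the order of $\pro$, so it suffices to prove that $L:=m+n+\ell-1$ divides the order of $\pro$. Since the order of any permutation is the least common multiple of its cycle lengths, it is enough to produce one $\pro$-orbit whose length is divisible by $L$. (Note that there is no hope of a uniform statistic forcing \emph{every} orbit to have length divisible by $L$, since the empty and full ideals are fixed points; so we genuinely want to construct a good orbit.) The orbit displayed in Figures~\ref{ex:pppro} and~\ref{ex:ppprob} already has length exactly $L=8$ for $(\ell,m,n)=(2,3,4)$, which fixes the target of the construction.

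I would work entirely with the boundary path matrix encoding $B(I)$ of Definition~\ref{def:boundmx}, on which $\pro$ acts by the left-to-right column scan described just before Figure~\ref{ex:ppprob}. This matrix has exactly $L=m+n+\ell-1$ columns, and the scan makes $\pro$ behave, informally, like a shift that pushes the leftmost column off and feeds a new column in on the right, subject to the compatibility condition that $\sum_{j\le k}b_{i,j}=\sum_{j\le k}b_{i+1,j}$ forces $b_{i+1,k+1}\neq 1$. The concrete goal is then to choose a starting ideal $I_0$ whose boundary path matrix is \emph{primitive} for this shift, i.e.\ is not fixed by any $\pro^{t}$ with $0<t<L$, so that its orbit has length exactly $L$. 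A natural candidate is the ``staircase'' matrix with row $i$ equal to $0^{i-1}1^{n}0^{m}0^{\ell-i}$, which one checks directly is a valid boundary path matrix. For $\ell=2$ one may instead route through Theorem~\ref{thm:ncpp}, under which $\pro$ becomes honest rotation of noncrossing partitions of $[m+n+1]=[L]$; there a partition with a unique nonsingleton block, such as $\{\{1,2\},\{3\},\ldots,\{L\}\}$, visibly has trivial rotational stabilizer and hence a length-$L$ orbit.

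The main obstacle is that $\pro$ is \emph{not} literally a cyclic rotation of the $L$ columns: the compatibility condition couples the rows, so although a single promotion step often acts as a clean left-rotation of the columns (as it does on the first matrix of the displayed orbit), the boundary ``wraparound'' intervenes as soon as the leading column would collide with a forced padding zero, and thereafter consecutive matrices are no longer column-rotations of one another. Consequently the period of an orbit cannot simply be read off, and the real content is to verify that the chosen $I_0$ does not close up before $L$ steps. I expect this to be the heart of the argument, handled either by tracking a distinguished feature---for instance the column at which the scan currently ``wraps''---and showing it advances through all $L$ columns before repeating, or, most cleanly in the $\ell=2$ case, by invoking the rotation model of Theorem~\ref{thm:ncpp}. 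Once a length-$L$ orbit is in hand, divisibility of the order of $\pro$, and hence of $\row$, by $m+n+\ell-1$ is immediate.
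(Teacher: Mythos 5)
Your overall strategy---pass to $\pro$ via Corollary~\ref{thm:pp} and exhibit a single orbit of length exactly $L=m+n+\ell-1$ in the boundary path matrix model---is the same as the paper's, but two things go wrong. First, your parenthetical claim that the empty and full ideals are fixed points is false: $\row$ sends the empty ideal to the ideal generated by all the minimal elements and sends the full ideal to the empty ideal, and $\pro$ likewise moves both. Indeed, the paper's proof uses precisely the orbit through the empty and full ideals, showing that $\pro^{m}$ of the empty ideal is the full ideal and $\pro^{n+\ell-1}$ of the full ideal is the empty ideal. Note also that your ``staircase'' matrix with row $i$ equal to $0^{i-1}1^{n}0^{m+\ell-i}$ \emph{is} the boundary path matrix of the full order ideal (each layer's boundary path $1^{n}0^{m}$ encodes the full ideal of that layer), so you are proposing to start the orbit at an ideal you have just declared to be a fixed point.

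Second, and more seriously, the heart of the argument---verifying that the chosen orbit does not close up in fewer than $L$ steps for general $\ell$---is exactly the part you defer. Your two suggested routes remain a plan: the $\ell=2$ reduction to rotation of noncrossing partitions via Theorem~\ref{thm:ncpp} does work, but it says nothing about $\ell>2$, and ``tracking a distinguished feature'' is not carried out. The paper closes this gap by explicitly computing the trajectory of the empty ideal: its boundary path matrix has row $i$ equal to $0^{m+i-1}1^{n}0^{\ell-i}$; the first $m$ applications of $\pro$ each cycle a leading all-zero column to the end, strictly enlarging the ideal until the full ideal is reached; and each of the next $n+\ell-1$ applications moves the nonzero column just left of the first all-zero column past the block of all-zero columns, returning to the empty ideal only at step $L$. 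Since the $L$ ideals encountered are visibly distinct, that orbit has length exactly $L$, which is what forces $L$ to divide the order of $\pro$ and hence of $\row$. Without some such explicit computation (or a primitivity argument you actually verify), your proposal does not yet prove the theorem for $\ell>2$.
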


\begin{proof}
By Corollary~\ref{thm:pp}, $\row$ and $\pro$ have the same order.  We show that $\pro^{m+n+\ell-1}$ applied to the empty order ideal is the identity.

From left to right, the $i$th row of the boundary path matrix of the empty order ideal has $m+i-1$ zeros, $n$ ones, and then $\ell-i$ zeros.  From the definition, applying $\pro^m$ to the empty order ideal cycles the first $m$ all-zero columns to the end.  This corresponds to the full order ideal containing all elements of $[\ell]\times[m]\times[n]$.  Each subsequent application of $\pro$ fixes every nonzero column except the one directly to the left of the first all-zero column.  Promotion then sends this column to the right of the all-zero columns.  Thus, $\pro^{n+\ell-1}$ takes the full order ideal back to the empty order ideal.
\end{proof}

In~\cite{cameron1995orbits}, P.\ Cameron and D.\ Fon-der-Flaass proved that when $m+n+\ell-1=p$ is prime with $n\geq (\ell-1)(m-1)$, $p$ divides the length of every orbit of $J([\ell]\times[m]\times[n])$ under rowmotion.  They further conjectured that the restriction $n\geq (\ell-1)(m-1)$ could be removed.
Having already dealt with the $[1]\times[m]\times[n]$ case in 
Section~\ref{sec:nxk},
we now examine more closely the case $\ell=2$.  
In the same paper,
P.\ Cameron and D.\ Fon-der-Flaass prove the following theorem
for $[2]\times[m]\times[n]$.

\begin{theorem}[P.\ Cameron, D.\ Fon-der-Flaass]
\label{thm:pporder}
	The order of $\row$ on $J([2]\times[m]\times[n])$ is $m+n+1$.
\end{theorem}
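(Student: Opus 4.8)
The plan is to compute the order of $\row$ by pinching it between two bounds and reading both off from boundary path matrices. By Corollary~\ref{thm:pp}, $\row$ and $\pro$ have the same order on $J([2]\times[m]\times[n])$, so I would work throughout with $\pro$. Theorem~\ref{thm:genpporder}, specialized to $\ell=2$, already supplies the lower bound: the common order is divisible by $m+n+1$, since the orbit of the empty order ideal has length exactly $m+n+1$ (the all-zero columns cycle around, carrying $\emptyset$ to the full ideal after $m$ steps and back after another $n+1$). Consequently the entire problem reduces to the complementary upper bound $\pro^{\,m+n+1}=\mathrm{id}$ on all of $J([2]\times[m]\times[n])$: divisibility by $m+n+1$ together with $\pro^{\,m+n+1}=\mathrm{id}$ forces the order to equal $m+n+1$ exactly, which is Theorem~\ref{thm:pporder}.

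To prove $\pro^{\,m+n+1}=\mathrm{id}$, I would argue entirely at the level of the boundary path matrices of Definition~\ref{def:boundmx}. Each order ideal is encoded by a $2\times(m+n+1)$ binary matrix whose rows each sum to $n$ and which obeys the compatibility condition recorded after that definition, and the valid matrices are exactly the image of $J([2]\times[m]\times[n])$. Recall that $\pro$ acts on these matrices by sweeping the columns from left to right and, at each adjacent pair of columns, swapping the two entries in a fixed row whenever the swap keeps the matrix valid. The goal is to show that every orbit of this action has length dividing $m+n+1$. The clean way to see this is to exhibit the hidden cyclic $C_{m+n+1}$-symmetry: I would produce an equivariant bijection between these matrices under $\pro$ and the noncrossing partitions of $[m+n+1]$ into $m+1$ blocks under rotation (this is exactly the companion result Theorem~\ref{thm:ncpp} of this subsection). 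Since a rotation of $[m+n+1]$ has order $m+n+1$, every orbit length divides $m+n+1$, and $\pro^{\,m+n+1}=\mathrm{id}$ follows at once.

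The heart of the argument, and the step I expect to be the main obstacle, is establishing that $C_{m+n+1}$-action, whether through the noncrossing-partition bijection or directly on the matrices. Two difficulties must be confronted. First, the two rows are coupled: the compatibility condition ties together the partial sums of row $1$ and row $2$, so one must verify that the constrained swaps in the two rows stay synchronized and that validity is preserved at every intermediate stage of a single sweep. Second, one must pin the period to \emph{exactly} $m+n+1$ rather than a proper multiple; the per-step behavior is not a naive uniform shift of all columns (as the full-ideal analysis in the proof of Theorem~\ref{thm:genpporder} shows, a single $\pro$ transports only the column just left of the leading all-zero block across that block). I would resolve this by tracking the all-zero columns as markers and showing, by the same local analysis used for $\emptyset$ extended to arbitrary valid matrices, that the induced motion closes up after precisely $m+n+1$ steps on every order ideal. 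Once the cyclic structure is in hand, the bound $\pro^{\,m+n+1}=\mathrm{id}$ is immediate, and combining it with the divisibility from Theorem~\ref{thm:genpporder} yields that the order of $\row$ on $J([2]\times[m]\times[n])$ is exactly $m+n+1$, recovering the theorem of P.\ Cameron and D.\ Fon-der-Flaass~\cite{cameron1995orbits}.
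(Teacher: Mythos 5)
Your proposal is correct and follows essentially the same route as the paper: reduce to $\pro$ via Corollary~\ref{thm:pp}, encode order ideals as boundary path matrices, and obtain the upper bound $\pro^{\,m+n+1}=\mathrm{id}$ from the equivariant bijection to noncrossing partitions of $[m+n+1]$ into $m+1$ blocks under rotation (Theorem~\ref{thm:ncpp}), which is exactly how the paper "simplifies" the Cameron--Fon-der-Flaass argument. The only cosmetic difference is that you make the exactness step explicit by citing the divisibility from Theorem~\ref{thm:genpporder}, whereas the paper leaves that half implicit in the phrase "from which [it] is easy to see the order must be $m+n+1$."
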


They proved this theorem by constructing a bijection between entire orbits of $J([2]\times[m]\times[n])$ under a conjugate of rowmotion and orbits of certain words under an action $\psi$ whose order is more easily analyzed.  By interpreting plane partitions as \rcposet{}s and encoding them as boundary path matrices, we will simplify this proof by giving a bijection from plane partitions under promotion to these words under $\psi$.   Finally, we give a simple equivariant bijection from the words under $\psi$ to noncrossing partitions of $[n+m+1]$ under rotation, from which is easy to see the order must be $m+n+1$.

A word containing parentheses is called \textit{balanced} if the number of left parentheses is always greater than or equal to the number of right parentheses.

\begin{definition}[P.\ Cameron, D.\ Fon-der-Flaass] \rm
\label{def:bracketwords}
Let $\beta_{m,n}$ be all balanced words of length $m+n+1$ on the alphabet $\left\{(,),\bullet,\fbox{$)($}\right\}$ with $m$ left parentheses and $m$ right parentheses (including those in a \fbox{$)($} symbol).

Define an action $\psi$ on $\beta_{m,n}$ as follows:
\begin{enumerate}
	\item $\psi\left[\bullet A_1 \right] = A_1 \bullet$,
	\item $\psi\left[(A_1) A_2\right] = A_1(A_2)$,
	\item $\psi\left[(A_1 \fbox{$)($} A_2 \fbox{$)($} \ldots \fbox{$)($} A_k
)A_{k+1} \right] = A_1 (A_2 \fbox{$)($} \ldots \fbox{$)($} A_{k} \fbox{$)($}
A_{k+1})$,
\end{enumerate}
where each of $A_1,A_2,\ldots,A_{k+1}$ is a balanced subword.
\end{definition}

\begin{theorem}[P.\ Cameron, D.\ Fon-der-Flaass]
\label{thm:bracket}
	There is an equivariant bijection between $J([2]\times[m]\times[n])$ under $\row$ and $\beta_{m,n}$ under $\psi$.
\end{theorem}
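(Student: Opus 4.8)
The plan is to transport everything to the promotion side, where the action is combinatorially explicit, and to read order ideals off their boundary path matrices. By Corollary~\ref{thm:pp} the actions $\row$ and $\pro$ on $J([2]\times[m]\times[n])$ are already equivariantly identified, so it suffices to exhibit an equivariant bijection between $J([2]\times[m]\times[n])$ under $\pro$ and $\beta_{m,n}$ under $\psi$. I would encode each order ideal $I$ by its boundary path matrix $B(I)=\{b_{i,j}\}$ of Definition~\ref{def:boundmx}: a $2\times(m+n+1)$ binary matrix whose rows each sum to $n$ and satisfy the nesting condition stated just before Theorem~\ref{thm:genpporder}. The key structural input is the explicit description of promotion recalled there: $\pro$ sweeps left to right across the columns of $B(I)$, exchanging the two entries of a row in adjacent columns exactly when the exchange preserves the nesting condition.

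Next I would build a bijection $\Phi$ from these matrices to words of length $m+n+1$ by scanning the columns. A column that advances only the first row becomes a left parenthesis and a column that advances only the second row becomes a right parenthesis, so that the running surplus of left over right parentheses tracks the height $S_k=\sum_{j\le k}(b_{1,j}-b_{2,j})$; the nesting condition is the statement $S_k\ge 0$, and this is what underlies balancedness of the image. The columns that advance both rows or neither row are the neutral ones, and these must be distributed between the symbols $\bullet$ and \fbox{$)($} so that the total number of left and of right parentheses is $m$, as required by $\beta_{m,n}$. Fixing this distribution is the one genuinely delicate part of the set-up: it has to be pinned down using the heights $S_k$ and the nesting condition (which, for instance, prevents a doubly advancing column at height $0$), and arranged so that $\Phi$ both lands in $\beta_{m,n}$ and is invertible. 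Granting a correct such dictionary, $\Phi$ is a bijection $J([2]\times[m]\times[n])\to\beta_{m,n}$.

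It then remains to prove equivariance, $\Phi\circ\pro=\psi\circ\Phi$. Here I would match the left-to-right column sweep of $\pro$ term by term against the three rewriting rules of Definition~\ref{def:bracketwords}: the swaps that carry a $\bullet$ or a completed matched block $(A_1)$ past an adjacent balanced subword should reproduce rules~(1) and~(2), and the swaps that move a column past a nested run of \fbox{$)($} symbols should reproduce the general rule~(3). Once equivariance is in hand, Theorem~\ref{thm:bracket} follows; as a byproduct the order of $\row$ equals the order of $\psi$, which with Theorem~\ref{thm:genpporder} recovers the value $m+n+1$.

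The main obstacle is precisely this equivariance, and inside it the interplay between the purely local column swaps of promotion and the global, height-dependent choice that distributes the neutral columns between $\bullet$ and \fbox{$)($}. Because the symbol attached to a neutral column is not a function of that column alone, I must show that a full sweep of $\pro$ permutes these columns in exactly the pattern that the bracket matching of rule~(3) prescribes, and that every intermediate swap respects the nesting condition in a way the rewriting rules already account for. Setting up the neutral-column dictionary so that it is at once a clean bijection onto $\beta_{m,n}$ and manifestly compatible with the sweep is where the real work lies; by contrast the reduction to $\pro$ and the verification that $\Phi$ is a set bijection are comparatively routine.
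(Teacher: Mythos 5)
Your overall strategy is the paper's: pass to $\pro$ via Corollary~\ref{thm:pp}, encode order ideals as boundary path matrices, translate columns into the alphabet of $\beta_{m,n}$, and verify equivariance by matching the left-to-right column sweep of promotion against the three rewriting rules of $\psi$. Your observation that the running surplus $S_k=\sum_{j\le k}(b_{1,j}-b_{2,j})$ is exactly the parenthesis height, and that the boundary path matrix condition is equivalent to balancedness, is also correct and is precisely the remark made in the paper.

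The genuine gap is in the dictionary itself, which you leave unspecified and, more importantly, mischaracterize. There is no class of ``neutral columns'' that must be distributed between $\bullet$ and \fbox{$)($} by a global, height-dependent rule: the columns $(0,0)^\intercal$ and $(1,1)^\intercal$ are distinct column types, and the correct correspondence is purely local --- $(1,0)^\intercal\leftrightarrow\mbox{`$($'}$, $(0,1)^\intercal\leftrightarrow\mbox{`$)$'}$, $(1,1)^\intercal\leftrightarrow\fbox{$)($}$, $(0,0)^\intercal\leftrightarrow\bullet$. With this assignment the map is trivially a bijection onto the words satisfying the balance condition, and the fact you cite (that the nesting condition forbids a doubly advancing column at height $0$) is simply the statement that \fbox{$)($} cannot occur at height $0$ in a balanced word; it is a consequence of the dictionary, not an input to choosing it. Searching for a height-dependent splitting rule would lead you away from the proof rather than toward it. What actually remains after fixing the local dictionary is exactly the three-case equivariance check you outline but do not perform: a leading $(0,0)^\intercal$ sweeps to the end (rule 1); a leading $(1,0)^\intercal$ first blocked by $(0,1)^\intercal$ stays put while the $(0,1)^\intercal$ moves to the end (rule 2); and a leading $(1,0)^\intercal$ first blocked by $(1,1)^\intercal$ triggers the cascade of $(1,1)^\intercal$ columns ending at the first $(0,1)^\intercal$ at the appropriate height (rule 3). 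Until those cases are written out, the proof is a plan rather than an argument.
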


Recall that the generating function for the number of boxes contained in plane partitions inside an $[\ell]\times[m]\times[n]$ box is given by a $q$-ification of the MacMahon box formula~\cite{Macmahon}.

\[M_{\ell,m,n}(q)= \prod_{1\leq i\leq \ell, 1\leq j\leq m, 1\leq k\leq n} \frac{[i+j+k-1]_q}{[i+j+k-2]_q} \]

In general, $(J([\ell]\times[m]\times[n]), M_{\ell,m,n}(q), C_{\ell+m+n-1})$ does not exhibit the CSP, where the cyclic group acts by rowmotion.  For $[3]\times[3]\times[3]$, one can check that the polynomial fails to exhibit the CSP, while for $[4]\times[4]\times[4]$, K.\ Dilks computed that there exist orbits of size $33=3 \cdot (4+4+4-1)$.

V.\ Reiner conjectured that $(J([2]\times[m]\times[n]),M_{2,m,n}(q),C_{m+n+1})$ exhibited the CSP.  D.\ B Rush and X.\ Shi recently proved this using P.\ Cameron and D.\ Fon-der-Flaass's equivariant bijection to $\beta_{m,n}$~\cite{RushShiREU}.  Their theorem, which we obtain as a corollary, was the inspiration for our bijection to noncrossing partitions in Theorem~\ref{thm:ncpp}.

\begin{theorem}
\label{thm:ncpp}
	There is an equivariant bijection between $J([2]\times[m]\times[n])$ under $\row$ and noncrossing partitions of $[n+m+1]$ into $m+1$ blocks under rotation.
\end{theorem}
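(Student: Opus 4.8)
The plan is to produce the equivariant bijection by chaining together the combinatorial encodings already set up in this section, routing through the Cameron--Fon-der-Flaass bracket words $\beta_{m,n}$. By Theorem~\ref{thm:bracket}, $J([2]\times[m]\times[n])$ under $\row$ is in equivariant bijection with $\beta_{m,n}$ under $\psi$, so it suffices to give an equivariant bijection between $\beta_{m,n}$ under $\psi$ and noncrossing partitions of $[n+m+1]$ into $m+1$ blocks under rotation. Thus the whole theorem reduces to understanding the single action $\psi$ on balanced words over the alphabet $\{(,),\bullet,\fbox{$)($}\}$ and matching it combinatorially to rotation of noncrossing partitions.

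First I would set up the static bijection between $\beta_{m,n}$ and noncrossing partitions of $[n+m+1]$ into exactly $m+1$ blocks. The natural idea is to read a word $w\in\beta_{m,n}$ of length $m+n+1$ positionally: each of the $m+n+1$ letters corresponds to one of the $m+n+1$ points being partitioned, and the parenthesization of the word (with the $\fbox{$)($}$ symbol counting as a nested close-then-open) dictates which points are grouped into a common block. Concretely, matched pairs of parentheses determine the nesting structure of a noncrossing partition in the standard bijection between balanced parenthesizations and noncrossing set partitions; the $\bullet$ symbols mark the $n$ singleton-starting positions, and the $m$ open/$m$ close parentheses carve out the remaining block structure. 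I would verify that the block count comes out to exactly $m+1$ by a counting argument: the $m$ closing parentheses produce $m$ ``merge'' events, and together with the outer block this yields $m+1$ blocks, so the map lands in the correct target set, and I would check it is a bijection by exhibiting the inverse (reading the nesting structure of a noncrossing partition back off as a balanced word, with $\fbox{$)($}$ recording adjacent close/open boundaries).

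The heart of the proof---and the step I expect to be the main obstacle---is showing that this bijection is equivariant, i.e.\ that $\psi$ corresponds precisely to rotation. The action $\psi$ is defined by three rather intricate rewriting rules (cases (1)--(3) of Definition~\ref{def:bracketwords}), and I would analyze each case to show it implements a single clockwise rotation of the $m+n+1$ points in the noncrossing-partition picture. Case (1), $\bullet A_1\mapsto A_1\bullet$, cleanly sends a singleton at the front to the back, which is exactly what rotation does to an isolated point. Cases (2) and (3), which move an outermost matched parenthesis (or a chain of $\fbox{$)($}$-linked subwords) to re-bracket the word, must be shown to correspond to cyclically moving the first point into its block's position after rotation while preserving noncrossingness. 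The delicate part is the $\fbox{$)($}$ bookkeeping in case (3): I would track how a block whose first element is at position $1$ gets reattached when that element rotates to the end, confirming that the noncrossing structure is exactly shifted by one. Once equivariance of this map is established, the order statement follows immediately: rotation on partitions of $[n+m+1]$ has order dividing $n+m+1$, and combined with Theorem~\ref{thm:genpporder} giving divisibility by $m+n+1$ in the other direction, the order of $\row$ on $J([2]\times[m]\times[n])$ is exactly $m+n+1$, recovering Theorem~\ref{thm:pporder} as a corollary.
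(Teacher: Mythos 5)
Your proposal is correct in outline but routes the argument differently from the paper. You take Theorem~\ref{thm:bracket} (the Cameron--Fon-der-Flaass equivariant bijection to $\beta_{m,n}$ under $\psi$) as a black box and concentrate all the work on the second leg: matching $\psi$ to rotation of noncrossing partitions via the standard parenthesis-pairing, with a case analysis of the three rewriting rules. The paper does nearly the opposite: its stated purpose in this section is to \emph{simplify the proof of} Theorem~\ref{thm:bracket}, so it first passes from $\row$ to $\pro$ via Corollary~\ref{thm:pp}, encodes order ideals as boundary path matrices, and shows by a column-by-column analysis that promotion on the columns of the matrix realizes exactly the three rules of $\psi$; the final step from $\beta_{m,n}$ to noncrossing partitions (matched parentheses form blocks, $\bullet$'s are singletons, block count read off from the $m+1$ zeros in the bottom row) is then dispatched in a few lines with equivariance declared clear. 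Your route is more economical if one is willing to cite Cameron--Fon-der-Flaass, and you rightly identify that the equivariance of $\psi$ with rotation---especially rule (3) with its $\fbox{$)($}$ chains---is the step that genuinely needs checking rather than being ``clear''; the paper's route buys a self-contained reproof of Theorem~\ref{thm:bracket} and the structural insight that $\psi$ \emph{is} column promotion, which is what powers the rest of the section. One small correction: your block count is garbled. The number of blocks equals the number of $\bullet$'s plus the number of plain left parentheses (each block is a singleton or a chain headed by a plain ``$($''), which equals the number of zeros in the bottom row of the boundary path matrix, namely $m+1$; equivalently, $m+n+1$ points minus $n$ matched pairs gives $m+1$ blocks. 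Your statement that ``$m$ merge events together with the outer block yield $m+1$ blocks'' does not compute ($m$ merges on $m+n+1$ points would leave $n+1$ blocks), and the number of $\bullet$'s is not $n$ in general but varies with the number of $\fbox{$)($}$ symbols. (The paper's own Definition~\ref{def:bracketwords} carries the same $m$-versus-$n$ ambiguity in its parenthesis count, so you should fix the convention explicitly before doing this count.)
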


\begin{proof}
By Corollary~\ref{thm:pp}, we may consider $J([2]\times[m]\times[n])$ under $\pro$.

We first convert a boundary path matrix to a balanced word in $\beta_{m,n}$
using the correspondence below on columns of the boundary path matrix.

\[\left(\begin{array}{c}
1\\
0
\end{array}\right) \leftrightarrow \mbox{`$($'} \qquad
\left(\begin{array}{c}
0\\
1
\end{array}\right) \leftrightarrow \mbox{`$)$'} \qquad
\left(\begin{array}{c}
1\\
1
\end{array}\right) \leftrightarrow \fbox{$)($} \qquad
\left(\begin{array}{c}
0\\
0
\end{array}\right) \leftrightarrow \bullet\]

Note that the boundary path matrix condition given after Definition~\ref{def:boundmx} is equivalent to saying that the resulting words are balanced.  Figure~\ref{ex:ppprowords} translates the boundary path matrices of Figure~\ref{ex:ppprob} to balanced words.

We show that this bijection is equivariant, using the definition of $\psi$ in Definition~\ref{def:bracketwords}.

The first rule, $\psi\left[\bullet A\right] = A \bullet$, corresponds to the case when the first column of the boundary path matrix is $(0,0)^\intercal$.  This column can swap with all other columns without violating the boundary path matrix condition, and so it is moved to the end of the word under promotion.

Consider when the first column is $(1,0)^\intercal$.  This column can swap with $(0,0)^\intercal$ and $(1,0)^\intercal$ without violating the boundary path matrix condition, but it cannot swap with $(0,1)^\intercal$ or $(1,1)^\intercal$.

The second rule, $\psi\left[(A_1) A_2\right] = A_1(A_2)$, corresponds to when the first column is $(1,0)^\intercal$ and the first column it encounters that it cannot swap with is $(0,1)^\intercal$.  In this case, the $(1,0)^\intercal$ remains fixed and the $(0,1)^\intercal$ is free to move to the end of the word.

The third rule, $\psi\left[(A_1 \fbox{$)($} A_2 \fbox{$)($} \ldots \fbox{$)($} A_{k} )A_{k+1} \right] = A_1 (A_2 \fbox{$)($} \ldots \fbox{$)($} A_{k} \fbox{$)($} A_{k+1})$, corresponds to when the first column $(1,0)^\intercal$ encounters $(1,1)^\intercal$ first.  Then the $(1,0)^\intercal$ remains and the $(1,1)^\intercal$ can swap to the right without violating the boundary path matrix condition until it reaches the first $(0,1)^\intercal$ such that the columns to the left have the same number of $1$s in the top and bottom rows.  This $(0,1)^\intercal$ then continues to the end of the word.

We now give an equivariant bijection from $\beta_{m,n}$ under $\psi$ to noncrossing partitions of $[n+m+1]$ into $m+1$ blocks under rotation.  For $i<j$, if `$($' in position $i$ is paired with `$)$' in position $j$---including brackets from the symbol \fbox{$)($}---then $i$ and $j$ are in a block together.  The resulting noncrossing partition will have exactly $m+1$ blocks because there are $m+1$ $0$'s in the bottom row of the boundary path matrix: each $(0,0)^\intercal$ column is replaced by a $\bullet$, which corresponds to a singleton block, and each $(1,0)^\intercal$ column becomes a `$($', which corresponds to the first element in a block.  For an example, see Figure~\ref{ex:ppprowords}.  It is clear that this bijection is equivariant.
\end{proof}

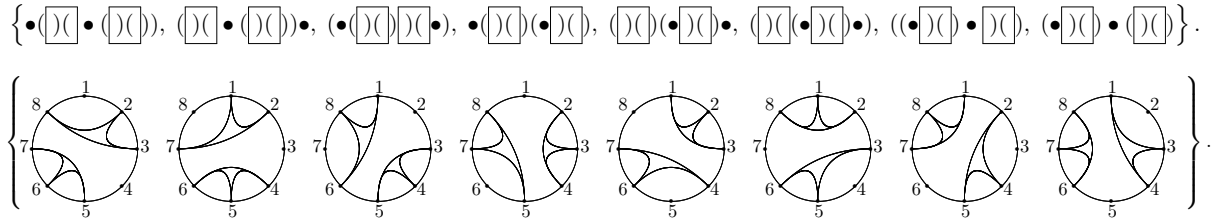
\begin{figure}[ht]
\begin{center}

\scalebox{0.8}{$\left \{ \bullet ( \fbox{$)($} \bullet ( \fbox{$)($} ) ), \;
( \fbox{$)($} \bullet ( \fbox{$)($} ) ) \bullet, \;
( \bullet ( \fbox{$)($} ) \fbox{$)($} \bullet ), \;
\bullet ( \fbox{$)($} ) ( \bullet \fbox{$)($} ), \; ( \fbox{$)($} ) ( \bullet \fbox{$)($} ) \bullet, \;
( \fbox{$)($} ( \bullet \fbox{$)($} ) \bullet ), \;
( ( \bullet \fbox{$)($} ) \bullet \fbox{$)($} ), \;
( \bullet \fbox{$)($} ) \bullet ( \fbox{$)($} ) \right \}.$}

\vspace{10pt}

\scalebox{0.7}{
$\left \{ \; \begin{gathered} \setlength{\unitlength}{2mm}
\begin{picture}(12,12)

\bigcircle{5}{5}{5}

\put(5,10){\circle*{0.3}}
\put(8.53,8.53){\circle*{0.3}}
\put(10,5){\circle*{0.3}}
\put(8.53,1.46){\circle*{0.3}}
\put(5,0){\circle*{0.3}}
\put(1.46,1.46){\circle*{0.3}}
\put(0,5){\circle*{0.3}}
\put(1.46,8.53){\circle*{0.3}}

\qbezier[500](8.53,8.53)(5,5)(10,5)
\qbezier[500](10,5)(5,5)(1.46,8.53)
\qbezier[500](5,0)(5,5)(1.46,1.46)
\qbezier[500](1.46,1.46)(5,5)(0,5)
\qbezier[500](8.53,8.53)(5,5)(1.46,8.53)
\qbezier[500](5,0)(5,5)(0,5)

\put(4.7,10.3){\small{1}}
\put(8.7,8.7){\small{2}}
\put(10.3,4.7){\small{3}}
\put(8.7,0.6){\small{4}}
\put(4.7,-1.5){\small{5}}
\put(0,0.6){\small{6}}
\put(-1.2,4.7){\small{7}}
\put(0,8.7){\small{8}}

\end{picture} \quad
\setlength{\unitlength}{2mm}
\begin{picture}(12,12)

\bigcircle{5}{5}{5}

\put(5,10){\circle*{0.3}}
\put(8.53,8.53){\circle*{0.3}}
\put(10,5){\circle*{0.3}}
\put(8.53,1.46){\circle*{0.3}}
\put(5,0){\circle*{0.3}}
\put(1.46,1.46){\circle*{0.3}}
\put(0,5){\circle*{0.3}}
\put(1.46,8.53){\circle*{0.3}}

\qbezier[500](5,10)(5,5)(8.53,8.53)
\qbezier[500](8.53,8.53)(5,5)(0,5)
\qbezier[500](8.53,1.46)(5,5)(5,0)
\qbezier[500](5,0)(5,5)(1.46,1.46)
\qbezier[500](5,10)(5,5)(0,5)
\qbezier[500](8.53,1.46)(5,5)(1.46,1.46)

\put(4.7,10.3){\small{1}}
\put(8.7,8.7){\small{2}}
\put(10.3,4.7){\small{3}}
\put(8.7,0.6){\small{4}}
\put(4.7,-1.5){\small{5}}
\put(0,0.6){\small{6}}
\put(-1.2,4.7){\small{7}}
\put(0,8.7){\small{8}}

\end{picture} \quad
\setlength{\unitlength}{2mm}
\begin{picture}(12,12)


\bigcircle{5}{5}{5}

\put(5,10){\circle*{0.3}}
\put(8.53,8.53){\circle*{0.3}}
\put(10,5){\circle*{0.3}}
\put(8.53,1.46){\circle*{0.3}}
\put(5,0){\circle*{0.3}}
\put(1.46,1.46){\circle*{0.3}}
\put(0,5){\circle*{0.3}}
\put(1.46,8.53){\circle*{0.3}}

\qbezier[500](1.46,8.53)(5,5)(5,10)
\qbezier[500](5,10)(5,5)(1.46,1.46)
\qbezier[500](10,5)(5,5)(8.53,1.46)
\qbezier[500](8.53,1.46)(5,5)(5,0)
\qbezier[500](1.46,8.53)(5,5)(1.46,1.46)
\qbezier[500](10,5)(5,5)(5,0)

\put(4.7,10.3){\small{1}}
\put(8.7,8.7){\small{2}}
\put(10.3,4.7){\small{3}}
\put(8.7,0.6){\small{4}}
\put(4.7,-1.5){\small{5}}
\put(0,0.6){\small{6}}
\put(-1.2,4.7){\small{7}}
\put(0,8.7){\small{8}}

\end{picture} \quad
\setlength{\unitlength}{2mm}
\begin{picture}(12,12)


\bigcircle{5}{5}{5}

\put(5,10){\circle*{0.3}}
\put(8.53,8.53){\circle*{0.3}}
\put(10,5){\circle*{0.3}}
\put(8.53,1.46){\circle*{0.3}}
\put(5,0){\circle*{0.3}}
\put(1.46,1.46){\circle*{0.3}}
\put(0,5){\circle*{0.3}}
\put(1.46,8.53){\circle*{0.3}}

\qbezier[500](1.46,8.53)(5,5)(5,0)
\qbezier[500](5,0)(5,5)(0,5)
\qbezier[500](10,5)(5,5)(8.53,1.46)
\qbezier[500](10,5)(5,5)(8.53,8.53)
\qbezier[500](8.53,1.46)(5,5)(8.53,8.53)
\qbezier[500](1.46,8.53)(5,5)(0,5)

\put(4.7,10.3){\small{1}}
\put(8.7,8.7){\small{2}}
\put(10.3,4.7){\small{3}}
\put(8.7,0.6){\small{4}}
\put(4.7,-1.5){\small{5}}
\put(0,0.6){\small{6}}
\put(-1.2,4.7){\small{7}}
\put(0,8.7){\small{8}}

\end{picture} \quad
\setlength{\unitlength}{2mm}
\begin{picture}(12,12)


\bigcircle{5}{5}{5}

\put(5,10){\circle*{0.3}}
\put(8.53,8.53){\circle*{0.3}}
\put(10,5){\circle*{0.3}}
\put(8.53,1.46){\circle*{0.3}}
\put(5,0){\circle*{0.3}}
\put(1.46,1.46){\circle*{0.3}}
\put(0,5){\circle*{0.3}}
\put(1.46,8.53){\circle*{0.3}}

\qbezier[500](8.53,8.53)(5,5)(5,10)
\qbezier[500](8.53,1.46)(5,5)(1.46,1.46)
\qbezier[500](10,5)(5,5)(5,10)
\qbezier[500](8.53,1.46)(5,5)(0,5)
\qbezier[500](0,5)(5,5)(1.46,1.46)
\qbezier[500](10,5)(5,5)(8.53,8.53)

\put(4.7,10.3){\small{1}}
\put(8.7,8.7){\small{2}} 
\put(10.3,4.7){\small{3}}
\put(8.7,0.6){\small{4}}
\put(4.7,-1.5){\small{5}}
\put(0,0.6){\small{6}}
\put(-1.2,4.7){\small{7}} 
\put(0,8.7){\small{8}}

\end{picture} \quad
\setlength{\unitlength}{2mm}
\begin{picture}(12,12)


\bigcircle{5}{5}{5}

\put(5,10){\circle*{0.3}}
\put(8.53,8.53){\circle*{0.3}}
\put(10,5){\circle*{0.3}}
\put(8.53,1.46){\circle*{0.3}}
\put(5,0){\circle*{0.3}}
\put(1.46,1.46){\circle*{0.3}}
\put(0,5){\circle*{0.3}}
\put(1.46,8.53){\circle*{0.3}}

\qbezier[500](8.53,8.53)(5,5)(5,10)
\qbezier[500](5,0)(5,5)(1.46,1.46)
\qbezier[500](1.46,8.53)(5,5)(5,10)
\qbezier[500](5,0)(5,5)(10,5)
\qbezier[500](10,5)(5,5)(1.46,1.46)
\qbezier[500](1.46,8.53)(5,5)(8.53,8.53)

\put(4.7,10.3){\small{1}}
\put(8.7,8.7){\small{2}} 
\put(10.3,4.7){\small{3}}
\put(8.7,0.6){\small{4}}
\put(4.7,-1.5){\small{5}}
\put(0,0.6){\small{6}}
\put(-1.2,4.7){\small{7}} 
\put(0,8.7){\small{8}}

\end{picture} \quad
\setlength{\unitlength}{2mm}
\begin{picture}(12,12)


\bigcircle{5}{5}{5}

\put(5,10){\circle*{0.3}}
\put(8.53,8.53){\circle*{0.3}}
\put(10,5){\circle*{0.3}}
\put(8.53,1.46){\circle*{0.3}}
\put(5,0){\circle*{0.3}}
\put(1.46,1.46){\circle*{0.3}}
\put(0,5){\circle*{0.3}}
\put(1.46,8.53){\circle*{0.3}}

\qbezier[500](0,5)(5,5)(5,10)
\qbezier[500](5,0)(5,5)(8.53,1.46)
\qbezier[500](1.46,8.53)(5,5)(5,10)
\qbezier[500](5,0)(5,5)(8.53,8.53)
\qbezier[500](8.53,1.46)(5,5)(8.53,8.53)
\qbezier[500](1.46,8.53)(5,5)(0,5)

\put(4.7,10.3){\small{1}}
\put(8.7,8.7){\small{2}} 
\put(10.3,4.7){\small{3}}
\put(8.7,0.6){\small{4}}
\put(4.7,-1.5){\small{5}}
\put(0,0.6){\small{6}}
\put(-1.2,4.7){\small{7}} 
\put(0,8.7){\small{8}}

\end{picture} \quad
\setlength{\unitlength}{2mm}
\begin{picture}(12,12)


\bigcircle{5}{5}{5}

\put(5,10){\circle*{0.3}}
\put(8.53,8.53){\circle*{0.3}}
\put(10,5){\circle*{0.3}}
\put(8.53,1.46){\circle*{0.3}}
\put(5,0){\circle*{0.3}}
\put(1.46,1.46){\circle*{0.3}}
\put(0,5){\circle*{0.3}}
\put(1.46,8.53){\circle*{0.3}}

\qbezier[500](0,5)(5,5)(1.46,1.46)
\qbezier[500](10,5)(5,5)(8.53,1.46)
\qbezier[500](1.46,8.53)(5,5)(1.46,1.46)
\qbezier[500](10,5)(5,5)(5,10)
\qbezier[500](8.53,1.46)(5,5)(5,10)
\qbezier[500](1.46,8.53)(5,5)(0,5)

\put(4.7,10.3){\small{1}}
\put(8.7,8.7){\small{2}} 
\put(10.3,4.7){\small{3}}
\put(8.7,0.6){\small{4}}
\put(4.7,-1.5){\small{5}}
\put(0,0.6){\small{6}}
\put(-1.2,4.7){\small{7}} 
\put(0,8.7){\small{8}}

\end{picture} \end{gathered} \right \}.$}
\end{center}
\caption{The balanced words coming from the boundary path matrices in Figure~\ref{ex:ppprob} and the corresponding noncrossing partitions.}
\label{ex:ppprowords}
\end{figure}

\begin{corollary}[D.\ Rush, X.\ Shi]
\label{cor:vicconj}
	Let $C_{m+n+1}$ act on $J([2]\times[m]\times[n])$ by $\row$.  Then $(J([2]\times[m]\times[n]), M_{2,n,m}(q), C_{m+n+1})$ exhibits the CSP.
\end{corollary}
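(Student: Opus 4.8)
The plan is to transport the statement across the equivariant bijection of Theorem~\ref{thm:ncpp} and then reduce to a known cyclic sieving phenomenon for noncrossing partitions. The CSP is preserved by equivariant bijections: if $(X,X(q),C_N)$ exhibits the CSP and $Y$ carries a $C_N$-action equivariantly isomorphic to the one on $X$, then $(Y,X(q),C_N)$ exhibits the CSP with the same polynomial. Since $M_{2,n,m}(q)=M_{2,m,n}(q)$ (the MacMahon box generating function depends only on $i+j+k$ and is therefore symmetric in its three arguments), Theorem~\ref{thm:ncpp} reduces the claim to showing that the noncrossing partitions of $[m+n+1]$ into $m+1$ blocks, under rotation by $C_{m+n+1}$, exhibit the CSP with polynomial $M_{2,m,n}(q)$.

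First I would invoke the refinement by block count of the Reiner--Stanton--White cyclic sieving phenomenon for noncrossing partitions~\cite{reiner2004cyclic}: for $C_N$ acting by rotation on the set $\mathrm{NC}(N,k)$ of noncrossing partitions of $[N]$ into $k$ blocks, the triple $(\mathrm{NC}(N,k),\mathrm{Nar}_q(N,k),C_N)$ exhibits the CSP, where the $q$-Narayana number is $\mathrm{Nar}_q(N,k)=\frac{1}{[N]_q}\binom{N}{k}_q\binom{N}{k-1}_q$. The direction of rotation is immaterial, since an element of $C_N$ and its inverse fix equally many partitions and $\mathrm{Nar}_q$ has real coefficients, so it takes equal values at $\omega$ and $\overline{\omega}=\omega^{-1}$.

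It then remains only to identify the two generating functions: I would prove the exact polynomial identity $M_{2,m,n}(q)=\mathrm{Nar}_q(m+n+1,m+1)$. Telescoping the MacMahon product first over the index $i\in\{1,2\}$, which turns each factor into $\frac{[j+k+1]_q}{[j+k-1]_q}$, and then over the index $k$ from $1$ to $n$, collapses it to $M_{2,m,n}(q)=\prod_{j=1}^m\frac{[j+n]_q\,[j+n+1]_q}{[j]_q\,[j+1]_q}=\frac{[m+n]!_q\,[m+n+1]!_q}{[m]!_q\,[m+1]!_q\,[n]!_q\,[n+1]!_q}$. Expanding $\mathrm{Nar}_q(m+n+1,m+1)=\frac{1}{[m+n+1]_q}\binom{m+n+1}{m+1}_q\binom{m+n+1}{m}_q$ into $q$-factorials and using $[m+n+1]!_q/[m+n+1]_q=[m+n]!_q$ yields the identical expression. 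Thus the two polynomials agree as polynomials, not merely at roots of unity, and the CSP of the previous paragraph transfers verbatim.

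The main obstacle I anticipate is not the $q$-factorial bookkeeping, which is a routine telescoping, but confirming that the cited noncrossing-partition CSP is available in precisely the form needed---in particular that the rotation appearing in Theorem~\ref{thm:ncpp} matches the rotation of~\cite{reiner2004cyclic}, and that the $q$-Narayana normalization is the one giving a genuine polynomial with nonnegative integer coefficients. Once that statement is secured, the polynomial identity above does all the remaining work and the corollary follows immediately.
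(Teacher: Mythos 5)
Your proposal is correct and follows essentially the same route as the paper: transport the action across the equivariant bijection of Theorem~\ref{thm:ncpp} to noncrossing partitions of $[m+n+1]$ into $m+1$ blocks under rotation, then invoke the Reiner--Stanton--White cyclic sieving phenomenon for that set. The paper's proof is a one-liner that leaves the identification $M_{2,m,n}(q)=\frac{1}{[m+n+1]_q}\binom{m+n+1}{m+1}_q\binom{m+n+1}{m}_q$ implicit, whereas you verify it explicitly (and correctly) by telescoping the MacMahon product; this is a welcome completion of the argument rather than a different approach.
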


\begin{proof}
	$J([2]\times[m]\times[n])$ under $\row$ is in equivariant bijection with noncrossing partitions of $[n+m+1]$ into $m+1$ blocks under rotation, which is known to exhibit the CSP~\cite{reiner2004cyclic}.
\end{proof}

We suspect that this bijection can be extended to $[\ell]\times[m]\times[n]$ for $\ell>2$; such a bijection would send an element of $J([\ell]\times[m]\times[n])$ to some noncrossing combinatorial object with $\ell+m+n-1$ external vertices, such that promotion translates to rotation of those vertices.  

\section{ASMs and TSCCPPs}
\label{sec:asmtsscpp}

We apply our methods to the alternating sign matrix and totally symmetric self-complementary plane partition posets, both of which are \rcposet{}s of height greater than one.  Given our previous results, it is natural to consider these posets as they both consist of layers of type $A$ positive root posets.  In particular, we give an equivariant bijection between ASMs under rowmotion and ASMs under B.\ Wieland's gyration, and we define two actions with related orders on ASMs and TSSCPPs.

\subsection{The ASM Poset}
\label{sec:asmp}

\begin{definition} \rm
\label{def:asm}
	An \emph{alternating sign matrix} (ASM) of order $n$ is an $n\times n$ matrix with entries $0$, $1$, or $-1$ whose rows and columns sum to 1 and whose nonzero entries in each row and column alternate in sign.
\end{definition}

Figure~\ref{ex:asm33} gives the $3\times 3$ ASMs.

\begin{figure}[ht]
\begin{center}
\scalebox{0.9}{
$
\left(
\begin{array}{rrr}
1 & 0 & 0 \\
0 & 1 & 0\\
0 & 0 & 1
\end{array} \right)
\left(
\begin{array}{rrr}
1 & 0 & 0 \\
0 & 0 & 1\\
0 & 1 & 0
\end{array} \right)
\left(
\begin{array}{rrr}
0 & 1 & 0 \\
1 & 0 & 0\\
0 & 0 & 1
\end{array} \right)
\left(
\begin{array}{rrr}
0 & 1 & 0 \\
0 & 0 & 1\\
1 & 0 & 0
\end{array} \right)
\left(
\begin{array}{rrr}
0 & 0 & 1 \\
1 & 0 & 0\\
0 & 1 & 0
\end{array} \right)
\left(
\begin{array}{rrr}
0 & 0 & 1 \\
0 & 1 & 0\\
1 & 0 & 0
\end{array} \right)
\left(
\begin{array}{rrr}
0 & 1 & 0 \\
1 & -1 & 1\\
0 & 1 & 0
\end{array} \right)
$}
\end{center}
\caption{The seven $3\times 3$ ASMs.}
\label{ex:asm33}
\end{figure}

ASMs have been objects of much study over the nearly three decades since W.\ Mills, D.\ Robbins, and H.\ Rumsey conjectured~\cite{MRRASMDPP} that the total number of $n \times n$ ASMs is

\begin{equation}
\label{eq:product}
\prod_{j=0}^{n-1} \frac{(3j+1)!}{(n+j)!}.
\end{equation}

This conjecture was proved 13 years later, independently---and by vastly different methods---by D.~Zeilberger~\cite{ZEILASM} and G.\ Kuperberg~\cite{KUP_ASM_CONJ}, and many new developments and directions have emerged since.  Nevertheless, an outstanding open problem is to find an explicit bijection between $n \times n$ ASMs and either of two sets of combinatorial objects known to be equinumerous with them: totally symmetric self-complementary plane partitions (TSSCPPs) inside a $2n\times 2n\times 2n$ box and descending plane partitions with largest part at most $n$.

We begin by recalling the poset interpretation of ASMs, first introduced by A.\ Lascoux and M.-P.\ Sch{\"u}tzenberger in~\cite{TREILLIS}. This poset is usually defined using monotone triangles, but we choose to define it equivalently using height functions because of their connection with gyration in Section~\ref{sec:gy}.  For many more interpretations of ASMs, see~\cite{propp2002many}.

\begin{definition} \rm
\label{def:heightfcn}
	A \emph{height function} of order $n$ is an $(n+1)\times (n+1)$ matrix $(h_{i,j})_{0\leq i,j \leq n}$ with $h_{0,k} = h_{k,0} = k$ and $h_{n,k} = h_{k,n} = n-k$ for $0 \leq k \leq n$, and such that adjacent entries in any row or column differ by $1$.
\end{definition}

The height functions of order $3$ are given in Figure~\ref{ex:heightfuncs}.

\begin{figure}[htb]
\begin{center}
$$\left(
\begin{array}{cccc}
 0 & 1 & 2 & 3 \\
 1 & 0 & 1 & 2 \\
 2 & 1 & 0 & 1 \\
 3 & 2 & 1 & 0
\end{array}
\right) \left(
\begin{array}{cccc}
 0 & 1 & 2 & 3 \\
 1 & 2 & 1 & 2 \\
 2 & 1 & 0 & 1 \\
 3 & 2 & 1 & 0
\end{array}
\right) \left(
\begin{array}{cccc}
 0 & 1 & 2 & 3 \\
 1 & 0 & 1 & 2 \\
 2 & 1 & 2 & 1 \\
 3 & 2 & 1 & 0
\end{array}
\right) 
 \left(
\begin{array}{cccc}
 0 & 1 & 2 & 3 \\
 1 & 2 & 1 & 2 \\
 2 & 3 & 2 & 1 \\
 3 & 2 & 1 & 0
\end{array}
\right)$$ 
$$\left(
\begin{array}{cccc}
 0 & 1 & 2 & 3 \\
 1 & 2 & 3 & 2 \\
 2 & 1 & 2 & 1 \\
 3 & 2 & 1 & 0
\end{array}
\right) \left(
\begin{array}{cccc}
 0 & 1 & 2 & 3 \\
 1 & 2 & 3 & 2 \\
 2 & 3 & 2 & 1 \\
 3 & 2 & 1 & 0
\end{array}
\right) 
\left(
\begin{array}{cccc}
 0 & 1 & 2 & 3 \\
 1 & 2 & 1 & 2 \\
 2 & 1 & 2 & 1 \\
 3 & 2 & 1 & 0
\end{array}
\right)$$
\end{center}
\caption{The seven height functions of order $3$.}
\label{ex:heightfuncs}
\end{figure}

\begin{proposition}[\cite{ELKP_DOMINO1}]
\label{prop:asmhf}
A bijection between $n\times n$ ASMs and height functions of order $n$ is given by mapping an ASM $(a_{ij})_{1\leq i,j \leq n}$ to the height function $\left(i+j-2\left(\sum_{i'=1}^i\sum_{j'=1}^j a_{i'j'}\right) \right)_{0\leq i,j\leq n}$.
\end{proposition}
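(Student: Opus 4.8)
The plan is to exhibit an explicit two-sided inverse and verify that each direction lands in the correct set. Throughout I would write $S_{i,j} = \sum_{i'=1}^i \sum_{j'=1}^j a_{i'j'}$ with the convention $S_{0,j}=S_{i,0}=0$, so that the proposed map is simply $h_{i,j} = i+j-2S_{i,j}$.

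First I would check that the map sends ASMs to height functions. The boundary conditions are immediate: $S_{0,k}=S_{k,0}=0$ gives $h_{0,k}=h_{k,0}=k$, while the column-sum (resp.\ row-sum) axiom gives $S_{n,k}=S_{k,n}=k$, hence $h_{n,k}=h_{k,n}=n-k$. For adjacency I would compute $h_{i,j}-h_{i,j-1}=1-2\sum_{i'=1}^i a_{i',j}$ and $h_{i,j}-h_{i-1,j}=1-2\sum_{j'=1}^j a_{i,j'}$, so it suffices to show every partial column sum $\sum_{i'=1}^i a_{i',j}$ and every partial row sum lies in $\{0,1\}$. This is exactly where the alternating-sign axiom enters: along any column the nonzero entries alternate in sign and sum to $1$, so they begin and end with $+1$, forcing each partial sum into $\{0,1\}$; the same holds for rows. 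Each adjacent difference is then $\pm1$, as required.

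Next I would construct the inverse. Solving $h_{i,j}=i+j-2S_{i,j}$ for $S_{i,j}$ and taking the mixed second difference yields the candidate $a_{i,j} = \tfrac12\bigl(h_{i-1,j}+h_{i,j-1}-h_{i,j}-h_{i-1,j-1}\bigr)$. To see this is a valid ASM I would argue cell-by-cell: setting $u=h_{i-1,j}-h_{i-1,j-1}$ and $v=h_{i,j}-h_{i,j-1}$, both in $\{+1,-1\}$ by the height-function adjacency condition, one has $a_{i,j}=\tfrac12(u-v)\in\{0,1,-1\}$. The row and column sums equal $1$ by telescoping, since $\sum_{j} a_{i,j} = (S_{i,n}-S_{i,0})-(S_{i-1,n}-S_{i-1,0})$ and the boundary values $h_{i,0}=i$, $h_{i,n}=n-i$ give $S_{i,0}=0$ and $S_{i,n}=i$, whence the sum is $i-(i-1)=1$. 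Finally, for the alternating-sign condition I would run the earlier argument in reverse: the partial column sums $c_{i,j}=\tfrac12\bigl(1-(h_{i,j}-h_{i,j-1})\bigr)$ lie in $\{0,1\}$, run from $c_{0,j}=0$ to $c_{n,j}=1$, and satisfy $a_{i,j}=c_{i,j}-c_{i-1,j}$; since a $\{0,1\}$-valued sequence cannot take two consecutive $+1$ or two consecutive $-1$ differences, the nonzero entries of column $j$ alternate in sign, and likewise for rows.

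Lastly I would confirm that the two maps are mutually inverse, which is a formal cumulative-sum/differencing identity: the double partial sum of the extracted $a_{i,j}$ recovers $S_{i,j}-S_{i,0}-S_{0,j}+S_{0,0}=S_{i,j}$ (using $S_{i,0}=S_{0,j}=0$), so reconstituting $h$ returns the original height function, while differencing the cumulative sums of a genuine ASM returns the ASM. The main obstacle—really the conceptual heart of the statement—is the bidirectional dictionary between the global alternating-sign axiom and the purely local statement that all partial row/column sums equal $0$ or $1$ (equivalently, that every edge-difference of $h$ is $\pm1$). Once that equivalence is established, well-definedness of both directions and the inverse identities reduce to routine telescoping.
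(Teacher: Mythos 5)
Your proof is correct and complete. Note that the paper itself gives no proof of this proposition; it is quoted from the cited reference (the Elkies--Kuperberg--Larsen--Propp domino-tilings paper), so there is no in-paper argument to compare against. Your verification is the standard one: the key observation that the alternating-sign-plus-unit-sum axiom is equivalent to all partial row and column sums lying in $\{0,1\}$ (equivalently, all edge-differences of $h$ being $\pm 1$) is exactly the right pivot, and the boundary conditions, the second-difference formula for the inverse, and the telescoping checks are all handled correctly.
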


Height functions of order $n$ have a partial ordering given by componentwise comparison of entries. This poset is a distributive lattice, and turns out to be the MacNeille completion of the Bruhat order on the symmetric group~\cite{TREILLIS}.  We denote the poset of join irreducibles as $\aposet_n$, so that $J(\aposet_n)$ is in bijection with the set of $n \times n$ ASMs. See~\cite{StrikerPoset} for further discussion.

For convenience---and in analogy with the construction of TSSCPPs in Section~\ref{sec:tsscpp}---we construct $\aposet_n$ directly as a layering of successively smaller root posets $\Phi^+(A_i)$.

\begin{definition} \rm
\label{def:asmposet}
	Define $\aposet_n$ to be the poset with the elements from $\Phi^+(A_1), \Phi^+(A_2) \ldots, \Phi^+(A_{n-1})$ and their covering relations, along with the additional covering relations that the root $e_j-e_k$ in $\Phi^+(A_i)$ covers the roots $e_j-e_k$ and $e_{j+1}-e_{k+1}$ in $\Phi^+(A_{i+1})$.
\end{definition}

\begin{proposition}
\label{prop:asmconstruction}
	$J(\aposet_n)$ is in bijection with the set of $n \times n$ ASMs.
\end{proposition}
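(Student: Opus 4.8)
The plan is to deduce the statement from the fundamental theorem of finite distributive lattices together with Proposition~\ref{prop:asmhf}. Recall that the height functions of order $n$, ordered by componentwise comparison, form a finite distributive lattice, which I will denote $L_n$; by Proposition~\ref{prop:asmhf} the elements of $L_n$ are in bijection with $n\times n$ ASMs. Birkhoff's theorem supplies a canonical lattice isomorphism $L_n\cong J(\mathrm{Irr}(L_n))$, where $\mathrm{Irr}(L_n)$ is the poset of join-irreducible elements of $L_n$. Since Definition~\ref{def:asmposet} provides a concrete candidate poset $\aposet_n$, it suffices to produce an isomorphism of posets $\aposet_n\cong\mathrm{Irr}(L_n)$: this upgrades to a lattice isomorphism $J(\aposet_n)\cong L_n$, and hence to the desired bijection with ASMs. (One could instead build an explicit set bijection between $J(\aposet_n)$ and height functions directly, but routing through $\mathrm{Irr}(L_n)$ lets me reuse the known lattice structure of $L_n$ recorded in~\cite{TREILLIS, StrikerPoset}.)

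First I would describe $\mathrm{Irr}(L_n)$ explicitly. Writing $h^{\min}_{i,j}=|i-j|$ for the minimum of $L_n$, one checks that $h$ covers $h'$ in $L_n$ exactly when $h'$ is obtained from $h$ by lowering a single interior strict local maximum by $2$; equivalently, $L_n$ is graded by $\tfrac12\sum_{i,j}(h_{i,j}-h^{\min}_{i,j})$, and each cover raises precisely one interior cell. Hence $h$ is join-irreducible if and only if it has exactly one interior strict local maximum, so that each join-irreducible is a single ``tent'' determined by the position of its peak together with its height. The $n-1$ join-irreducibles of minimal rank are the tents of height $2$ over the diagonal cells $(i,i)$, which I would match with the simple roots $e_i-e_{i+1}$, i.e.\ the minimal elements of the bottom layer $\Phi^+(A_{n-1})$. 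In general I would assign to a join-irreducible, via the position of its peak and its height, a root $e_j-e_k$ in a layer $\Phi^+(A_i)$, and confirm that this is a bijection on underlying sets using $|\aposet_n|=\binom{n+1}{3}$.

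With the elements matched, the crux is to verify that the two partial orders agree. Concretely, I must show that the covering relations of $\mathrm{Irr}(L_n)$ are exactly those of Definition~\ref{def:asmposet}: the relations inside a fixed layer that make it a copy of $\Phi^+(A_i)$, and the interlayer relations asserting that $e_j-e_k$ in $\Phi^+(A_i)$ covers $e_j-e_k$ and $e_{j+1}-e_{k+1}$ in $\Phi^+(A_{i+1})$. In the lattice language this amounts to deciding, for two join-irreducibles, exactly when one tent lies pointwise below the other, and I expect the careful tracking of which smaller tents sit beneath a given peak to be the main obstacle. The recursive shape of Definition~\ref{def:asmposet}, in which $\aposet_n$ is obtained from $\aposet_{n-1}$ by adjoining $\Phi^+(A_{n-1})$ as a new bottom layer, mirrors the way a new local minimum becomes available along the diagonal as $n$ increases; I would therefore organize the comparison as an induction on $n$, checking that the adjoined bottom layer contributes precisely the join-irreducibles whose tents reach the new diagonal cells and that their relations to the older join-irreducibles are the claimed interlayer covers. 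Once this order-isomorphism is established, Birkhoff's theorem and Proposition~\ref{prop:asmhf} immediately give the bijection between $J(\aposet_n)$ and $n\times n$ ASMs.
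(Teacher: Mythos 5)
Your approach is exactly the one the paper intends: the paper introduces $\aposet_n$ as the poset of join irreducibles of the distributive lattice of height functions (citing \cite{TREILLIS} and \cite{StrikerPoset}) and states Proposition~\ref{prop:asmconstruction} without written proof, so the justification is precisely Birkhoff's theorem combined with Proposition~\ref{prop:asmhf}, as you propose. Your identification of the join-irreducibles as single-peak ``tents'' and the matching of their order with the layered root-poset description of Definition~\ref{def:asmposet} is the content the paper leaves to the references, and your count $|\aposet_n|=\binom{n+1}{3}$ and inductive verification of the interlayer covers would correctly fill in that omitted check.
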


\begin{figure}[htb]
\begin{center}
$\begin{array}{cc}
\begin{gathered}
\left(
\begin{array}{ccccc}
 0 & 1 & 2 & 3 & 4 \\
 1 & 2 & 1 & 2 & 3\\
 2 & 3 & 2 & 1 & 2\\
 3 & 2 & 1 & 0 & 1\\
4 & 3 & 2 & 1 & 0
\end{array}
\right) \end{gathered} &
\begin{gathered}\includegraphics[scale=0.5]{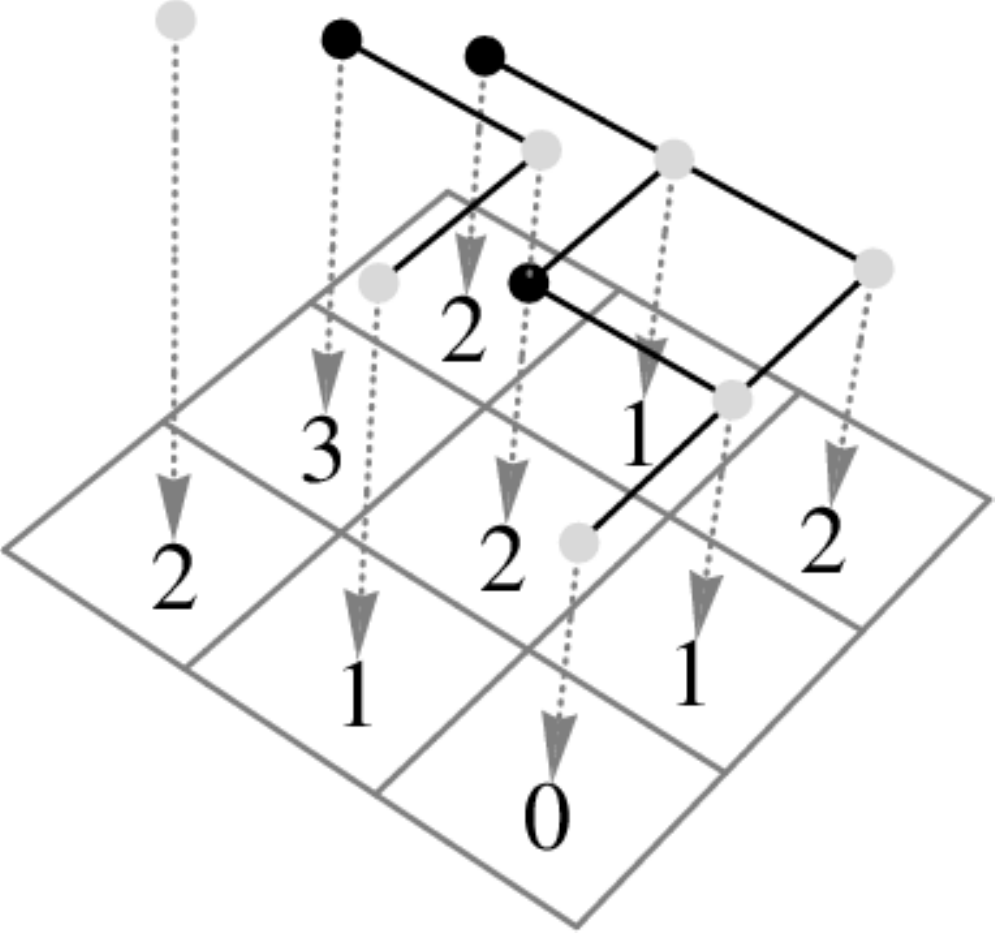}\end{gathered} \\
\end{array}$
\end{center}
\caption{On the left is a height function and on the right is the corresponding order ideal in $\aposet{}_4$.  The dotted arrows down indicate how the elements of the order ideal project down to the entries of the height function.}
\label{ex:posetA}
\end{figure}

See Figure~\ref{ex:posetA} for an example of the correspondence between height functions and order ideals.  Figure~\ref{ex:asm3} gives the order ideals of $\aposet_3$.  

\begin{figure}[htb]
\begin{center} $\left \{ \begin{gathered}
\scalebox{0.65}{
$\xymatrix @-1.2pc {
& \circ & & & \circ \\
\ar@{-}[ur] \ar@{-}[urrrr] \circ & & \ar@{-}[ul] \ar@{-}[urr] \circ & & }$
$\xymatrix @-1.2pc {
& \circ & & & \bullet \\
\ar@{-}[ur] \ar@{-}[urrrr] \bullet & & \ar@{-}[ul] \ar@{-}[urr] \bullet & & }$
$\xymatrix @-1.2pc {
& \bullet & & & \circ \\
\ar@{-}[ur] \ar@{-}[urrrr] \bullet & & \ar@{-}[ul] \ar@{-}[urr] \bullet & & }$
$\xymatrix @-1.2pc {
& \circ & & & \circ \\
\ar@{-}[ur] \ar@{-}[urrrr] \bullet & & \ar@{-}[ul] \ar@{-}[urr] \circ & & }$
$\xymatrix @-1.2pc {
& \circ & & & \circ \\
\ar@{-}[ur] \ar@{-}[urrrr] \circ & & \ar@{-}[ul] \ar@{-}[urr] \bullet & & }$
$\xymatrix @-1.2pc {
& \bullet & & & \bullet \\
\ar@{-}[ur] \ar@{-}[urrrr] \bullet & & \ar@{-}[ul] \ar@{-}[urr] \bullet & & }$
$\xymatrix @-1.2pc {
& \circ & & & \circ \\
\ar@{-}[ur] \ar@{-}[urrrr] \bullet & & \ar@{-}[ul] \ar@{-}[urr] \bullet & & }$} \end{gathered} \right \}.$
\end{center}
\caption{The seven order ideals in $J(\aposet_3)$.  They form a single orbit under superpromotion (Definition~\ref{def:super}).}
\label{ex:asm3}
\end{figure}
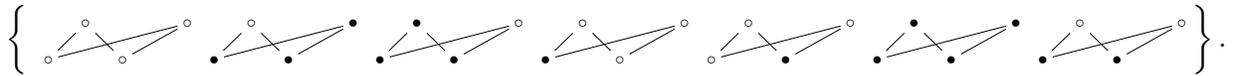

We can draw $\aposet_n$ as an \rcposet{} of height $h=n-1$ by sending $e_i-e_j$ in $\Phi^+(A_k)$ to the position $(i+j+n-k,i-j+n-k)$.  See Figure~\ref{ex:asm4}.

\begin{figure}[htb]
\begin{center}
$\begin{gathered}\includegraphics[scale=0.5]{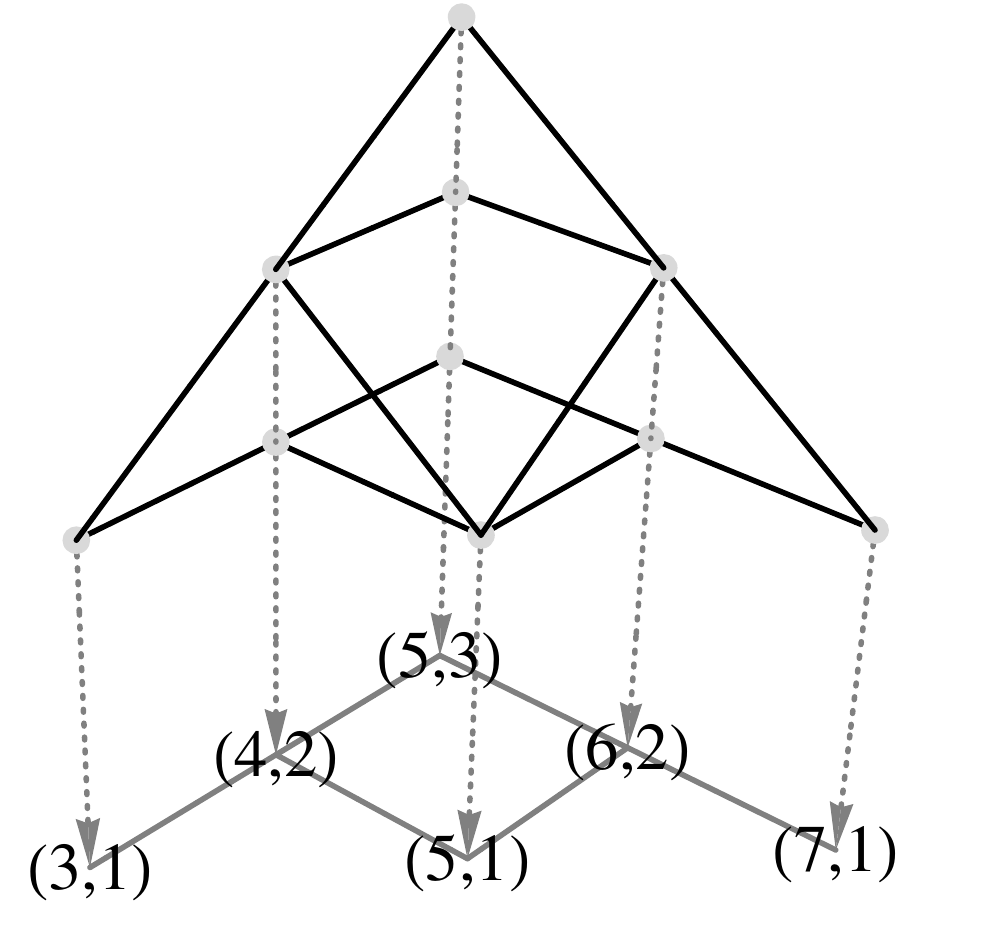}\end{gathered}$
\end{center}
\caption{$\aposet_4$ drawn as an \rcposet{} of height 3.  When there are multiple elements with the same position, subsequent elements are raised; the position is indicated by a dotted arrow down.  Covering relations are drawn with solid black lines, and are projected down as solid gray lines.}
\label{ex:asm4}
\end{figure}

Since $\aposet_n$ is an \rcposet{}, Theorem~\ref{thm:maintheorem} applies, giving an equivariant bijection between promotion and rowmotion on the ASM poset.

\begin{corollary}
	There is an equivariant bijection between $J(\aposet_n)$ under $\row$ and under $\pro$.
\end{corollary}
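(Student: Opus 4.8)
The plan is to recognize the corollary as a direct instance of Theorem~\ref{thm:maintheorem}: that theorem produces an equivariant bijection between $J(\mathcal{R})$ under $\pro_\nu$ and under $\row_\omega$ for \emph{any} rc-poset $\mathcal{R}$ and any $\omega,\nu$, and in particular between $J(\mathcal{R})$ under $\pro=\pro_{k\ldots21}$ and $J(\mathcal{R})$ under $\row=\row_{12\ldots n}$ (these last equalities being the definition of $\pro$ and the characterization of $\row$ from Section~\ref{sec:prorowtoggle}). Thus the entire content is to certify that $\aposet_n$, equipped with the position map $\pi$ sending the root $e_i-e_j$ in the layer $\Phi^+(A_k)$ to its prescribed point of $\Pi$, really is an rc-poset---i.e.\ that across every covering relation the second (row) coordinate drops by exactly $1$ while the first (column) coordinate changes by $\pm1$.

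First I would check the covering relations internal to a single layer $\Phi^+(A_k)$. These are exactly the root-poset coverings $e_{i-1}-e_j \gtrdot e_i-e_j$ and $e_i-e_{j+1}\gtrdot e_i-e_j$, and the displacement computation is identical to the one already verified for the height-one poset $\Phi^+(A_n)$ in Section~\ref{sec:rcposets}: each such covering moves the covered element one row down and one column to the left or right, so its displacement is $(1,-1)$ or $(-1,-1)$. Next I would handle the inter-layer coverings introduced in Definition~\ref{def:asmposet}, where $e_i-e_j$ in $\Phi^+(A_k)$ covers both $e_i-e_j$ and $e_{i+1}-e_{j+1}$ in $\Phi^+(A_{k+1})$; here one checks directly that the two covered elements sit one row below and in the two diagonally adjacent columns, so again the two displacements are $(1,-1)$ and $(-1,-1)$. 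Because coincident positions are stacked, with at most one element per layer sharing a position, the resulting height is $h=n-1$, consistently with Figure~\ref{ex:asm4}.

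Having verified the covering condition, $\aposet_n$ is an rc-poset and Theorem~\ref{thm:maintheorem} applies verbatim, giving the equivariant bijection between $J(\aposet_n)$ under $\row$ and under $\pro$. Since Proposition~\ref{prop:asmconstruction} identifies $J(\aposet_n)$ with the set of $n\times n$ ASMs, this is exactly the asserted bijection. I expect the only real obstacle to be the inter-layer displacement check: unlike the internal coverings, these relations mix two different root posets, so one must track how the layer shift built into $\pi$ interacts with the root coordinates and confirm that both covered elements genuinely land in diagonally adjacent positions one row below---once this is in hand, the rest is a formal appeal to the main theorem.
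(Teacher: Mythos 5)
Your proposal is correct and follows the paper's own route exactly: exhibit $\aposet_n$ as an rc-poset via the stated position map and invoke Theorem~\ref{thm:maintheorem}; the paper simply asserts the rc-poset structure (with the position map and Figure~\ref{ex:asm4}) where you spell out the displacement checks for the intra-layer and inter-layer coverings. The only caveat is that for those checks to yield displacements $(\pm 1,-1)$ the second coordinate of the position map must be $j-i+n-k$ rather than the printed $i-j+n-k$; with that orientation both verifications go through as you describe and the appeal to the main theorem is immediate.
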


Interestingly, a conjugate to rowmotion and promotion in the toggle group of $\aposet_n$ has already been studied.

\subsection{Rowmotion and Gyration}
\label{sec:gy}

\begin{definition} \rm
\label{def:fpl}
	Consider the grid $[n] \times [n]$.  A \emph{fully-packed loop configuration (FPL)} of order $n$ is a set of paths that begin and end only at every second outward-pointing edge, such that each of the $n^2$ vertices within the grid lies on exactly one path.
\end{definition}

Figure~\ref{ex:fplgyration} gives the FPLs of order $3$.

\begin{figure}[ht]
\begin{center}
\includegraphics[scale=0.75]{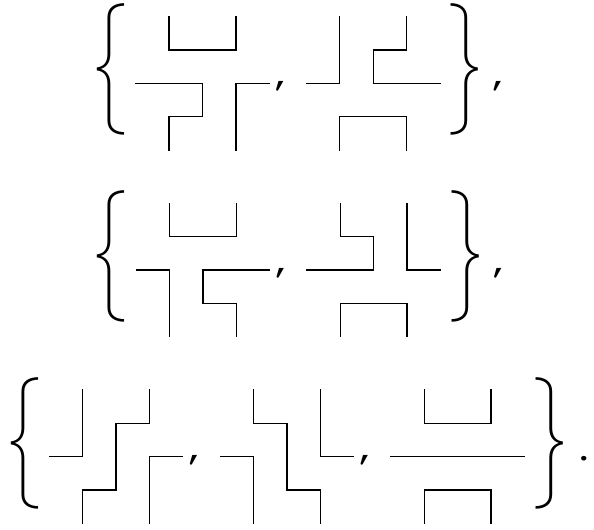}
\end{center}
\caption{The seven FPLs of order $3$.  They break into three orbits under gyration.}
\label{ex:fplgyration}
\end{figure}

We recall from  Proposition~\ref{prop:asmhf} that height functions of order $n$ are in bijection with $n\times n$ ASMs. Thus, FPLs can be seen to be in bijection with ASMs through height functions via the following.

\begin{proposition}[\cite{ELKP_DOMINO1}]
\label{prop:heightofpl}
	Height functions of order $n$ are in bijection with FPLs of order $n$.
\end{proposition}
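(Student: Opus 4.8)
The plan is to realize both sides as two readings of the same local combinatorial data, exploiting the geometric duality between the two grids. The $(n+1)\times(n+1)$ array of height values of Definition~\ref{def:heightfcn} should be viewed as sitting at the \emph{corners} of an $n\times n$ array of unit cells, while the vertices of the FPL grid of Definition~\ref{def:fpl} sit at the \emph{centers} of those cells. Under this placement each edge of the FPL grid crosses exactly one cell-edge, and a cell-edge joins two horizontally- or vertically-adjacent height corners, which by hypothesis differ by $1$. So every FPL edge is canonically associated to an ordered pair of adjacent heights. The forward map is then purely local: color the cell-edges in checkerboard fashion according to the parity of their position, and declare an FPL edge to be present exactly when the two flanking heights decrease in the direction singled out by that parity. (Equivalently, one may orient each cell-edge by the rule ``keep the larger height on a fixed side'' to obtain the arrows of the six-vertex model with domain-wall boundary, and then read off the FPL by the standard checkerboard rule on arrows; I would present whichever bookkeeping is cleanest.)

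First I would verify that the forward map always lands in the set of FPLs. This requires checking two things. At each cell center—an FPL vertex—the four incident FPL edges correspond to the four edges of that cell, hence to the four height corners read cyclically; since consecutive corners differ by $\pm1$, I would show the local rule forces exactly two of the four incident edges to be present, so every vertex of the grid lies on exactly one path. Second, I would check that the boundary prescription $h_{0,k}=h_{k,0}=k$ and $h_{n,k}=h_{k,n}=n-k$ produces precisely the monotone boundary behavior that makes paths enter and leave at every second outward-pointing edge. Both are finite local computations once the parity convention is fixed.

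For injectivity and surjectivity I would construct the inverse explicitly: from an FPL, invert the local rule to assign each cell-edge a $\pm1$ height difference, fix $h_{0,0}=0$, and recover $h$ by summing these differences along any lattice path of corners. \textbf{The main obstacle is showing this inverse is well defined}, that is, that the assigned differences are curl-free so the potential $h$ is single-valued. The four cell-edges meeting at an interior corner $C$ are crossed by exactly the four FPL edges bounding the unit square of FPL vertices centered at $C$, and the requirement that the four differences around $C$ sum to zero is equivalent to the statement that $C$ is flanked by two increases and two decreases. This is exactly where the FPL axiom ``each vertex lies on exactly one path'' is consumed: the two-in/two-out structure of a genuine loop configuration (no interior path endpoints) is precisely the vanishing of the discrete curl at every corner. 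Granting that consistency, the boundary data of the FPL pin the boundary corners of $h$ to the required values, the two constructions are visibly mutually inverse, and since everything is local and checkerboard-symmetric no global case analysis is needed beyond the single-corner consistency check.
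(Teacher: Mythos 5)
Your construction is the same as the paper's: orient the edges between adjacent height entries toward the larger value (the six-vertex arrows), then apply the even/odd checkerboard rule to decide which edges survive as FPL edges. The paper only sketches this one direction and cites the reference, whereas you additionally spell out the local two-in/two-out verification and the curl-free argument for the inverse; this is a more complete write-up of the same proof, not a different one.
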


\begin{proof}
	We sketch one direction of the bijection (see \cite{ELKP_DOMINO1}).

	Draw directed edges between adjacent entries in the height function matrix, pointing from the smaller value to the larger value.  Rotate each of these edges a quarter-turn counterclockwise about its midpoint and label each vertex even or odd according to the parity of the sum of its row and column indices.  Now delete all edges that exit odd vertices and enter even vertices, and unorient the remaining edges.
\end{proof}

\begin{definition} \rm
	Pairing up the boundary edges of each path reduces the FPL to a noncrossing matching on $2n$ vertices.  This matching is called the \emph{link pattern} of the FPL.
\end{definition}

In 2000, B.\ Wieland defined an action called \emph{gyration} on FPLs, which he proved rotated the corresponding link pattern~\cite{Wieland2000}.  This resolved the refinement by H.\ Cohn and J.\ Propp of a conjecture of C.\ Bosley and L.\ Fidkowski that the number of FPLs with a certain link pattern depends only on the link pattern up to rotation.  Similar actions had been studied by W.\ Mills, D.\ Robbins, and H.\ Rumsey in~\cite{MRRASMDPP}, though without the combinatorial significance of B.\ Wieland's result.

In 2010, L.\ Cantini and A.\ Sportiello generalized gyration in their proof of the Razumov-Stroganov conjecture that the number of FPLs with a given link pattern appears as the ground state components of the $O(1)$ loop model of statistical physics~\cite{RAZSTROGpf}.

\begin{definition} \rm
	Given an FPL, its \emph{gyration} is computed by first visiting all squares with lower left-hand corner $(i,j)$ for which $i+j$ is even, and then all squares for which $i+j$ is odd, swapping the edges around a square if the edges are parallel and otherwise leaving them fixed.
\end{definition}

Figure~\ref{ex:fplgyration} lists the FPLs by orbits under gyration.  We can define gyration directly on height functions.

\begin{proposition}
Gyration acts on height functions 
$(h_{i,j})_{0\leq i,j\leq n}$ 
by visiting all entries $h_{i,j}$, first those for which $i+j$ is even, and then those for which $i+j$ is odd,  
changing $h_{i,j}$ to its other possible value if each adjacent entry is equal and otherwise leaving it fixed.
\end{proposition}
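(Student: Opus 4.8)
The plan is to transport B.\ Wieland's gyration through the height-function/FPL bijection of Proposition~\ref{prop:heightofpl} and check that it becomes exactly the stated toggling rule. The crucial structural fact is that the recipe in the proof of Proposition~\ref{prop:heightofpl}---direct each grid edge from its smaller to its larger endpoint, rotate it a quarter-turn counterclockwise about its midpoint, and delete edges by the parity rule---is \emph{local}: the four FPL edges surrounding the square centered at a lattice point $(i,j)$ are obtained from the four grid edges incident to the height-function entry $h_{i,j}$, and hence depend only on $h_{i,j}$ together with its four orthogonal neighbors. First I would make this correspondence precise, verifying that the unit square acted on by gyration with a given lower-left corner matches (after the fixed half-integer shift introduced by the quarter-turn rotation) a unique entry $h_{i,j}$, and that this matching preserves the parity of $i+j$.

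The heart of the argument is a single local lemma. Since adjacent entries of a height function differ by $1$ (Definition~\ref{def:heightfcn}), an interior entry $h_{i,j}$ can be changed to its other admissible value precisely when its four neighbors are all equal, i.e.\ precisely when $h_{i,j}$ is a strict local maximum or minimum; otherwise its value is forced and it is not flippable. I would show that under the recipe above a local maximum sends all four incident grid edges inward and a local minimum sends them all outward, and that these are exactly the two configurations in which the surrounding FPL square carries a pair of \emph{parallel} edges---with the maximum and minimum giving the horizontal and vertical pairs respectively. A mixed neighborhood, by contrast, produces a non-parallel (corner) configuration. Thus flippability of $h_{i,j}$ corresponds exactly to the square being gyration-active, and---since flipping $h_{i,j}$ between its local-max and local-min values reverses all four incident edge directions---the flip corresponds precisely to gyration swapping the parallel pair to the perpendicular pair, while inactive squares and forced entries are left fixed on both sides.

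It then remains to match the two-phase order. Gyration first processes all squares with $i+j$ even and then all with $i+j$ odd; by the parity-preserving correspondence of the first step this is exactly the order in which the height-function rule visits its entries. Within a single parity class the operations are independent: two entries of the same parity are never orthogonally adjacent (neighbors differ by $1$ in a single coordinate and so have opposite parity), so flipping one entry only alters entries of the opposite parity and cannot affect the flippability of another entry in the same class. Hence all same-parity flips may be performed simultaneously, which is why the rule can be phrased as visiting \emph{all} entries of each parity. Combining the local lemma with this commutativity gives the equality of the two actions.

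I expect the main obstacle to be the local case analysis in the second paragraph: one must track the quarter-turn rotation and the ``delete edges exiting odd vertices and entering even vertices'' rule carefully enough to confirm that local-maximum and local-minimum neighborhoods yield exactly the two parallel edge-pairs of the surrounding square (and that mixed neighborhoods do not), and to pin down the precise index and parity bookkeeping relating a square's lower-left corner to the entry $h_{i,j}$. Everything else---the flippability characterization and the commutativity of same-parity flips---is a short consequence of the defining difference-by-one condition for height functions.
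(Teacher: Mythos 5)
The paper offers no proof of this proposition at all---it is stated as a direct reinterpretation of gyration through the height-function/FPL bijection of Proposition~\ref{prop:heightofpl}---so your proposal is supplying details the authors treat as immediate rather than diverging from an existing argument. Your strategy is the right one and is essentially correct: the four FPL edges of the unit square centered at $h_{i,j}$ are the quarter-turn rotations of the four grid edges incident to $h_{i,j}$; an interior entry is flippable iff all four neighbors agree, i.e.\ iff it is a strict local extremum; at an extremum all four directed edges circulate the same way around the square, and since the square's corners alternate in parity the even-to-odd deletion rule retains exactly one opposite (parallel) pair, with max and min selecting the two different pairs, so the height-function flip matches gyration's parallel-edge swap; and the parity bookkeeping plus the non-adjacency of same-parity entries handles the two-phase order and the simultaneity within each phase. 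The one inaccuracy is your claim that a mixed neighborhood always yields a ``corner'' configuration: a $3$--$1$ split of the neighbors produces a square with one or three of its edges present (and a saddle produces zero), not an adjacent pair. What actually matters---and what does hold---is only that a non-extremal entry never yields a parallel pair, so the square is gyration-inactive exactly when the entry is not flippable; you should state and check that weaker dichotomy rather than the corner claim. With that correction the argument goes through.
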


Using this definition of gyration on height functions, we may interpret gyration directly in terms of the toggle group of the poset $\aposet_n$.

\begin{proposition}
\label{pro:gy135}
	Gyration acts as $\row_{135 \ldots 246 \ldots}$ on $J(\aposet_n)$.
\end{proposition}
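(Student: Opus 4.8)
The plan is to show that the per-cell local move defining gyration on height functions (the proposition immediately preceding the statement) is literally the same as a single toggle in the stack of $\aposet_n$ lying over that cell, and then to assemble the row toggles $r_i$ into the two parity passes of gyration.

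First I would fix the dictionary. Under Proposition~\ref{prop:asmhf} and Definition~\ref{def:asmposet}, together with the rc-embedding $e_i-e_j \in \Phi^+(A_k) \mapsto (i+j+n-k,\,i-j+n-k)$, each interior entry $h_{i,j}$ of the height function corresponds to exactly one rc-position, and the poset elements occupying that position form a stack whose ``fullness'' records $(h_{i,j}-\min)/2$. I would check that this stack is a chain---consecutive stack elements, though not covering each other, are comparable through an intermediate root in the lower layer---so that any order ideal meets each stack in a bottom segment and $h_{i,j}$ is a strictly increasing function of the number of stack elements in the ideal.

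Next, the heart of the argument: a single toggle in the stack over $(i,j)$ changes $h_{i,j}$ by $\pm 2$, and I would verify that this is possible exactly when all four orthogonal neighbors of $h_{i,j}$ are equal. Because the neighbors of $h_{i,j}$ have the opposite parity and each differs from $h_{i,j}$ by $1$, every neighbor equals $h_{i,j}\pm1$; if they are not all equal then $h_{i,j}$ is forced and no stack element is addable or removable, whereas if they all equal some $v$ then $h_{i,j}\in\{v-1,v+1\}$ and exactly one stack element toggles, switching $h_{i,j}$ to the other value. The order-ideal toggleability condition (addable when everything below lies in $I$, removable when everything above lies outside) translates, via the covering relations of $\aposet_n$ projecting onto height-function adjacency, into precisely this ``all neighbors equal'' condition; in particular at most one element of any stack is ever toggleable, since two comparable non-covering stack elements are separated by an intermediate element that cannot be simultaneously in and out of $I$. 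Thus the stack-toggle is verbatim the move ``change $h_{i,j}$ to its other possible value if each adjacent entry is equal, otherwise leave it fixed.''

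Finally I would assemble the passes. A row toggle $r_i$ applies this per-cell move to every stack in rc-row $i$ at once: since rows of equal parity are pairwise non-adjacent, the relations $r_i^2=1$ and $(r_ir_j)^2=1$ for $|i-j|>1$ (Definition~\ref{def:rici}) make $r_1r_3r_5\cdots$ a single simultaneous flip of all toggleable cells in the odd rc-rows, and likewise $r_2r_4\cdots$ for the even rows; the moves within one pass do not interfere, because a cell's move reads only its neighbors, which lie in rows of the other parity. It then remains to check, directly from the position map, that rc-row parity matches the antidiagonal parity $i+j$ of the corresponding height-function cell, and that the order ``odd rows then even rows'' coincides with gyration's convention of visiting $i+j$ even before $i+j$ odd. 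With the parities aligned, $\row_{135\ldots246\ldots}$ is exactly the two parity passes of gyration, proving the claim. The main obstacle is the middle step: pinning down the dictionary so that a toggle in a height-$>1$ stack corresponds to a single $\pm2$ change of one entry, and proving that toggleability is equivalent to ``all four neighbors equal.'' Once this local equivalence is in hand, the parity bookkeeping is routine, though it must be carried out carefully to obtain the even-before-odd ordering rather than its reverse.
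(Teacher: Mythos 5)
Your proposal is correct and follows essentially the same route as the paper: identify interior height-function entries with (stacks of) poset elements, match the antidiagonal parity $i+j$ with the opposite rc-row parity, and read off gyration's two parity passes as $r_1r_3r_5\cdots$ followed by $r_2r_4\cdots$. The paper's proof is far terser---it simply asserts the parity correspondence---whereas you spell out the local equivalence between a single stack toggle and the ``change $h_{i,j}$ if all neighbors agree'' move, which the paper leaves implicit in the height-function/order-ideal dictionary.
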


\begin{proof}
	The interior height function entries on a diagonal with $i+j$ of fixed parity correspond to poset elements in rows of the opposite parity, so gyration moves through the poset toggling elements in odd rows first, then those in even rows.  This corresponds to $\row_{135 \ldots 246 \ldots}$ on $J(\aposet_n)$.
\end{proof}

Therefore, by Lemma \ref{lem:symconj}, we conclude that rowmotion and gyration are conjugate elements.

\begin{theorem}
	There is an equivariant bijection between $J(\aposet_n)$ under rowmotion and under gyration.
\end{theorem}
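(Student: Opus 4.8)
The plan is to deduce this immediately from the toggle-group machinery, exactly as the remark preceding the statement indicates: rowmotion and gyration are two products of the \emph{same} row-toggle generators of $\aposet_n$, differing only in the order of multiplication, so Lemma~\ref{lem:symconj} makes them conjugate and Lemma~\ref{lem:eq} converts that conjugacy into an equivariant bijection.

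First I would record that $\aposet_n$ is an \rcposet{} (as exhibited in Figure~\ref{ex:asm4}), so that the row toggles $r_1,\ldots,r_n$ of Definition~\ref{def:rici} satisfy the relations $r_i^2 = 1$ and $(r_i r_j)^2 = 1$ whenever $|i-j| > 1$. These are precisely the hypotheses of Lemma~\ref{lem:symconj} with $g_i = r_i$; the point is that, by the definition of an \rcposet{}, covering relations occur only between diagonally adjacent elements, so no two elements in the same row, nor in rows of index difference greater than one, share a covering relation. Next I would write both maps as products of these generators indexed by permutations in $\mathfrak{S}_n$: by the corollary following Definition~\ref{def:prorowtogg}, rowmotion is $\row = \row_{12\ldots n} = \prod_{i=1}^n r_i$, while by Proposition~\ref{pro:gy135}, gyration acts as $\row_{135\ldots 246\ldots} = \prod_i r_{\sigma(i)}$, where $\sigma$ lists the odd indices before the even ones. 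Applying Lemma~\ref{lem:symconj} then produces an element $g$ of the toggle group $T(\aposet_n)$ with $\text{gyration} = g\,\row\,g^{-1}$.

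Finally, taking $G = T(\aposet_n)$ acting on $X = J(\aposet_n)$, with $g_1 = \row$ and $g_2 = \text{gyration}$, Lemma~\ref{lem:eq} furnishes the desired equivariant bijection $I \mapsto gI$ between $J(\aposet_n)$ under rowmotion and $J(\aposet_n)$ under gyration. I expect no genuine obstacle here: the substantive content is entirely contained in Proposition~\ref{pro:gy135}, which has already been established, and the remainder is a direct invocation of Lemmas~\ref{lem:symconj} and~\ref{lem:eq}. If one wanted more than mere existence, the only additional work would be to extract the conjugating element $g$ explicitly from the constructive procedure in the proof of Lemma~\ref{lem:symconj}, in the spirit of Theorem~\ref{thm:explicit}; but since the statement asks only for an equivariant bijection, this is not needed.
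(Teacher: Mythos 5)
Your proposal is correct and follows exactly the paper's own route: the paper likewise deduces the theorem immediately from Proposition~\ref{pro:gy135} (gyration acts as $\row_{135\ldots246\ldots}$) together with Lemma~\ref{lem:symconj} applied to the row toggles, with Lemma~\ref{lem:eq} supplying the equivariant bijection. No differences of substance.
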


\subsection{ASM Superpromotion}
\label{sec:asmaction}

	Though gyration rotates FPL link patterns with order 2n, on the FPLs themselves, gyration has order greater than $2n$ for $n>4$. Though the order of gyration does not seem to adhere to a simple pattern, we can conclude that it is always divisible by $2n$, since for each $n$ there is a link pattern with order exactly $2n$.  For example, the order of gyration (and rowmotion/promotion) on $\aposet_n$ for some small values of $n$ is: $$\begin{array}{|c|c|c|c|c|c|c|c|} \hline
n & 1 & 2 & 3 & 4 & 5 & 6 & 7 \\ \hline
\text{Order of Gyration}  & 1 & 2 & 6 & 8 & 20 & 2520 & 3686760 \\ \hline
\end{array}.$$
	We define a new action on ASMs that has order divisible by $3n-2$.  We will compare this action with $\row$ on the TSSCPP poset, which by Theorem~\ref{thm:tsscpporder} also has order divisible by $3n-2$. 

\begin{definition} \rm
\label{def:super}
	Define \emph{ASM superpromotion} on $\aposet_n$ by $\spro = \prod_{i=1}^{n-1} \pro_i$ where each $\pro_i$ acts as promotion on $\Phi^+(A_i)$.
\end{definition}

Figure~\ref{ex:asm3} lists the single cycle of $J(\aposet_3)$ under
$\spro$.

\begin{theorem}
\label{thm:asmorder}
$J(\aposet_n)$ under $\spro$ has order divisible by $3n-2$.
\end{theorem}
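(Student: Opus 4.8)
The plan is to mirror the proof of Theorem~\ref{thm:genpporder}: rather than computing the full order of $\spro$, I will exhibit a single orbit whose length is exactly $3n-2$, since the existence of one cycle of length $3n-2$ already forces $3n-2$ to divide the order of $\spro$. The natural candidate is the orbit of the empty order ideal $\emptyset \in J(\aposet_n)$, which for $n=3$ is already all of $J(\aposet_3)$---the single cycle of length $7 = 3\cdot 3 - 2$ displayed in Figure~\ref{ex:asm3}.

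To make this tractable I would encode each order ideal of $\aposet_n$ by the boundary paths of its $n-1$ layers, recording them as the rows of a staircase-shaped array in the spirit of Definition~\ref{def:boundmx}: row $k$ holds the boundary path of the layer $\Phi^+(A_k)$, a binary word of length $2(k+1)$, offset so that the columns of the array agree with the columns of the \rcposet{} $\aposet_n$. Because $\pro_k$ is promotion on the height-one poset $\Phi^+(A_k)$, the analysis of Section~\ref{sec:prorowtoggle} (as in Theorem~\ref{prop:proback}) shows that it acts on row $k$ of the array as a single left-to-right sweep of the columns, with the admissibility of each toggle constrained by the inter-layer covering relations of Definition~\ref{def:asmposet}. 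Thus $\spro$ promotes the layers one at a time, and the entire action can be read off the array.

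I would then track the array of the empty ideal under repeated application of $\spro$. As in Theorem~\ref{thm:genpporder}, the empty ideal's array consists of the ``all the way down'' boundary paths, and the first several applications of $\spro$ cycle the all-zero columns of the frame through the array, carrying $\emptyset$ up to the full order ideal $\aposet_n$ and then back to $\emptyset$. The decisive bookkeeping is that the common frame into which the staircase of layers is fitted has width exactly $3n-2$ (the longest layer $\Phi^+(A_{n-1})$ contributes a boundary path of length $2n$, and the diagonal offsets forced by the inter-layer relations extend the frame to total width $3n-2$), and that each of the $3n-2$ intermediate ideals produced before the return to $\emptyset$ is distinct. Granting this, the orbit of $\emptyset$ has length exactly $3n-2$ and the theorem follows.

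The main obstacle is precisely the coupling between layers, which is absent in the plane-partition setting of Theorem~\ref{thm:genpporder}: there all layers are copies of the same poset $[m]\times[n]$ and promotion is one uniform column sweep, whereas here the layers $\Phi^+(A_1),\ldots,\Phi^+(A_{n-1})$ have different sizes, $\spro$ promotes them separately, and the covering relation sending $e_j-e_k$ in $\Phi^+(A_i)$ onto $e_j-e_k$ and $e_{j+1}-e_{k+1}$ in $\Phi^+(A_{i+1})$ couples the sweeps diagonally. Verifying that these coupled sweeps nevertheless carry the empty ideal through exactly $3n-2$ distinct stages---and, in particular, that the orbit does not close up prematurely after a proper divisor of $3n-2$---is where the real work lies.
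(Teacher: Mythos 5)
Your overall strategy coincides with the paper's: both arguments follow the orbit of the empty order ideal under $\spro$ and aim to show it has length exactly $3n-2$, which forces $3n-2$ to divide the order. (You are also right to insist on length \emph{exactly} $3n-2$ rather than merely $\spro^{3n-2}(\emptyset)=\emptyset$; an orbit length that only divides $3n-2$ would not yield the divisibility claim.) The problem is that your proposal stops precisely where the proof has to happen. You introduce a boundary-path-array encoding, assert that the frame has width $3n-2$, assert that $\spro$ cycles all-zero columns through this frame, and then write ``granting this'' and ``where the real work lies.'' None of these assertions is verified, and the theorem is exactly the claim that this bookkeeping works out.

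The deferred step is not routine, and the intuition you import from Theorem~\ref{thm:genpporder} does not transfer cleanly. There, every layer is a copy of $[m]\times[n]$, on whose boundary path promotion acts essentially as rotation of a binary word (Theorem~\ref{thm:square}), so migrating all-zero columns is the right picture. For a layer $\Phi^+(A_k)$, promotion acts on the boundary path as rotation of the associated \emph{noncrossing matching} (Theorem~\ref{thm:WhiteA}), not of the word itself: for instance, the empty ideal of a single $\Phi^+(A_2)$ lies in a $\pro$-orbit of length $2$, not $6$ (Figure~\ref{ex:max33}). So the single-layer dynamics are already different from the plane-partition case, and the entire content of the theorem is how the inter-layer covering relations of Definition~\ref{def:asmposet} conspire to produce period $3n-2$. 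The paper's proof handles this directly and concretely: it identifies $\spro^{n-1}(\emptyset)$ as the ideal whose restriction to $\Phi^+(A_k)$ consists of the $e_i-e_j$ with $i<j\leq 2k-n+1$, shows that $n$ further applications yield the full ideal, and that $n-1$ more applications, each deleting one row, return to $\emptyset$; the monotone growth and then shrinkage rules out an early return. To complete your argument you would need to produce an explicit description of the intermediate ideals of exactly this kind, at which point the array formalism is doing essentially no work.
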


\begin{proof}
	We show that $\spro^{3n-2}$ applied to the empty order ideal is the identity.

	Applying $\spro^{n-1}$ to the empty order ideal gives the order ideal such that the restriction to the $\Phi^+(A_{k})$ in $\aposet_n$ is the order ideal containing the elements $e_i-e_j$, for $i<j\leq 2k-n+1$.  Applying $\spro^{n}$ to this order ideal then gives the order ideal containing all elements of $\aposet_n$.  Finally, $\spro^{n-1}$ takes us back to the empty order ideal by removing one row from the order ideal at a time.
\end{proof}

While this action is of order $3n-2$ for $n \leq 6$, it has order $3 \cdot (3 \cdot 7-2) = 57$ for $n=7$ (see Figure~\ref{ex:asmpptable}).

The obvious $q$-analogue of~(\ref{eq:product}) is $\prod_{j=0}^{n-1} \frac{(3j+1)!_q}{(n+j)!_q},$ which is the generating function of descending plane partitions (DPPs) with largest part at most $n$~\cite{MRRASMDPP}.  In general, a corresponding statistic on ASMs or TSSCPPs is not known.  For permutation matrices, however, such a statistic is known, and there is a statistic-preserving bijection to a subclass of DPPs~\cite{STRIKERDPP}.  It is not hard to check that this polynomial cannot exhibit the CSP with superpromotion, though the first time it fails is at $n=6$.  CSPs for ASMs acted on by cyclic groups of small order have been shown for quarter-turns and half-turns in~\cite{REUpaper}.  For a related result on vertically symmetric ASMs, see~\cite{Williams2008}.

Much as FPLs demonstrate that the order of gyration is divisible by $2n$, our hope is that there exists a generalization of the map stated in Theorem~\ref{prop:proback} to take ASMs to some noncrossing combinatorial object on $3n-2$ external vertices.

\subsection{The TSSCPP Poset}
\label{sec:tsscpp}

For our purposes, we need only define the poset whose order ideals are in bijection with totally symmetric self-complementary plane partitions; see~\cite{StrikerPoset} for a definition of TSSCPPs and an explanation of how the partial order is obtained (this partial order is the same as the partial order on the \emph{magog} triangles of~\cite{ZEILASM}).

In analogy with the construction of the ASM poset in Section~\ref{sec:asmp}, we construct the TSSCPP poset $\tposet_n$ as a layering of successive root posets $\Phi^+(A_i)$. 

\begin{definition} \rm
\label{def:tsscppposet}
	Define $\tposet_n$ to be the poset with elements from $\Phi^+(A_1),\Phi^+(A_2), \ldots, \Phi^+(A_{n-1})$ and their covering relations, along with the additional covering relations that the root $e_j-e_k$ in $\Phi^+(A_i)$ is covered by the root $e_j-e_k$ in $\Phi^+(A_{i+1})$.
\end{definition}

\begin{proposition}
\label{prop:tsscppconstruction}
	$J(\tposet_n)$ is in bijection with the set of TSSCPPs inside a $2n\times 2n\times 2n$ box.
\end{proposition}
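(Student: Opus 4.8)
The plan is to mirror the treatment of the ASM poset in Proposition~\ref{prop:asmconstruction}: I will exhibit a bijection between $J(\tposet_n)$ and magog triangles of order $n$, which are known from \cite{ZEILASM} and \cite{StrikerPoset} to be in bijection with TSSCPPs inside a $2n \times 2n \times 2n$ box, and which carry exactly the componentwise partial order referred to after Definition~\ref{def:tsscppposet}. Since that order makes the magog triangles into a distributive lattice, it suffices (by Birkhoff's theorem) to show that $\tposet_n$ is isomorphic to the poset of join-irreducibles of this lattice; equivalently, I will describe the bijection $J(\tposet_n) \to \{\text{magog triangles}\}$ directly.

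First I would analyze the inter-layer structure of $\tposet_n$. For a fixed root $e_j - e_k$, its copies occur in the layers $\Phi^+(A_i)$ with $i \geq k-1$, and by Definition~\ref{def:tsscppposet} these copies form a chain, the copy in $\Phi^+(A_i)$ lying below the copy in $\Phi^+(A_{i+1})$. Hence an order ideal $I \in J(\tposet_n)$ restricts on each such chain to an initial segment, and is therefore completely recorded by the integers $t_{j,k}(I)$ counting how many copies of $e_j - e_k$ lie in $I$. I would take the triangular array $(t_{j,k}(I))$ as the image of $I$; this is the natural candidate for the associated magog triangle.

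The key step is to check that this map is a bijection onto magog triangles, i.e.\ that the down-closure condition on $I$ translates precisely into the defining monotonicity and boundedness constraints of a magog triangle. The intra-layer covering relations of $\tposet_n$ are the positive-root order on each $\Phi^+(A_i)$, and I would verify that requiring $I$ to be an order ideal with respect to these relations is equivalent to the monotonicity inequalities relating neighboring entries $t_{j,k}$; the range $k-1 \leq i \leq n-1$ of available layers supplies the upper bounds on the entries. This amounts to a local check at each covering relation that the correspondence is both order-preserving and order-reflecting, entirely parallel to the ASM case but with the simpler single-root inter-layer relation of Definition~\ref{def:tsscppposet} in place of the two-root relation of Definition~\ref{def:asmposet}.

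The main obstacle is bookkeeping: fixing the magog-triangle conventions from \cite{ZEILASM, StrikerPoset} (indexing of cells, direction of monotonicity, and exact bounds) so that the intra-layer root order and the layer range $k-1 \le i \le n-1$ line up on the nose with the magog constraints. Once the indexing is pinned down, the verification is a finite, local matching of cover relations, and no deeper input is needed beyond the already-established identification of magog triangles with TSSCPPs.
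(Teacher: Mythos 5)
You should first be aware that the paper offers no proof of this proposition: it is asserted with a pointer to \cite{StrikerPoset}, and the surrounding text (``this partial order is the same as the partial order on the \emph{magog} triangles of~\cite{ZEILASM}'') indicates that the intended justification is precisely the magog-triangle identification you describe. So your plan is not really a different route --- it reconstructs the argument the paper delegates to its reference, in parallel with Proposition~\ref{prop:asmconstruction}. Your structural observations are all correct: the copies of a fixed root $e_j-e_k$ live in the layers $\Phi^+(A_i)$ for $k-1\le i\le n-1$ and form a saturated chain under the inter-layer covers of Definition~\ref{def:tsscppposet}, so an order ideal $I$ is faithfully recorded by the counts $t_{j,k}(I)\in\{0,1,\dots,n-k+1\}$, and the remaining content is the local check at the intra-layer covers. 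If you carry that check out, the two elements covered by $e_j-e_k$ inside a layer (namely $e_j-e_{k-1}$ and $e_{j+1}-e_k$, present when $k>j+1$) translate down-closure of $I$ into the inequalities $t_{j,k-1}\ge t_{j,k}+1$ and $t_{j+1,k}\ge t_{j,k}$ whenever $t_{j,k}\ge 1$, together with the bounds $0\le t_{j,k}\le n-k+1$; after the reindexing you flag, these are exactly the defining constraints of a magog triangle (equivalently, of the fundamental domain of a TSSCPP in a $2n\times 2n\times 2n$ box). The only caveat is that as written your argument remains a plan --- the magog conventions and these inequalities still need to be pinned down explicitly --- but there is no gap in the idea, and the deferred steps are genuinely routine.
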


Figure~\ref{ex:tsscpp3} gives the order ideals of $\tposet_3$.

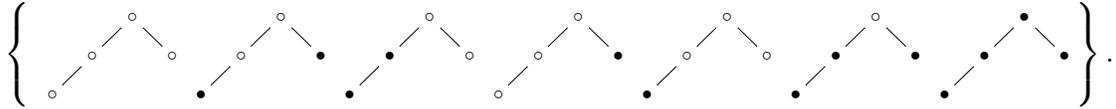
\begin{figure}[ht]
\begin{center}
$\left \{ \begin{gathered} \scalebox{0.7}{
$\xymatrix @-1.2pc {
& & \circ & \\
&\ar@{-}[ur] \circ & & \ar@{-}[ul] \circ \\
 \ar@{-}[ur] \circ & & &\\
}$
$\xymatrix @-1.2pc {
& & \circ & \\
&\ar@{-}[ur] \circ & & \ar@{-}[ul] \bullet \\
 \ar@{-}[ur] \bullet & & &\\
}$
$\xymatrix @-1.2pc {
& & \circ & \\
&\ar@{-}[ur] \bullet & & \ar@{-}[ul] \circ \\
\ar@{-}[ur] \bullet & & &\\
}$
$\xymatrix @-1.2pc {
& & \circ & \\
&\ar@{-}[ur] \circ & & \ar@{-}[ul] \bullet \\
\ar@{-}[ur] \circ & & & \\
}$
$\xymatrix @-1.2pc {
& & \circ & \\
&\ar@{-}[ur] \circ & & \ar@{-}[ul] \circ \\
 \ar@{-}[ur] \bullet & & & \\
}$
$\xymatrix @-1.2pc {
& & \circ & \\
& \ar@{-}[ur] \bullet & & \ar@{-}[ul] \bullet \\
 \ar@{-}[ur] \bullet & & & \\
}$
$\xymatrix @-1.2pc {
& & \bullet & \\
& \ar@{-}[ur] \bullet & & \ar@{-}[ul] \bullet \\
 \ar@{-}[ur] \bullet & & & \\
}$}\end{gathered} \right \}.$
\end{center}
\caption{There are seven order ideals in $J(\tposet_3)$.  They form a single orbit under rowmotion.}
\label{ex:tsscpp3}
\end{figure}

We can draw $\tposet_n$ as an \rcposet{} of height $h= \lfloor \frac{n}{2} \rfloor$ by sending $e_i-e_j$ in $\Phi^+(A_k)$ to the position $(i+j+k,j-i+k)$.  See Figure~\ref{ex:tsscpp4}.

\begin{figure}[ht]

\begin{center}
$\begin{gathered}\includegraphics[scale=0.5]{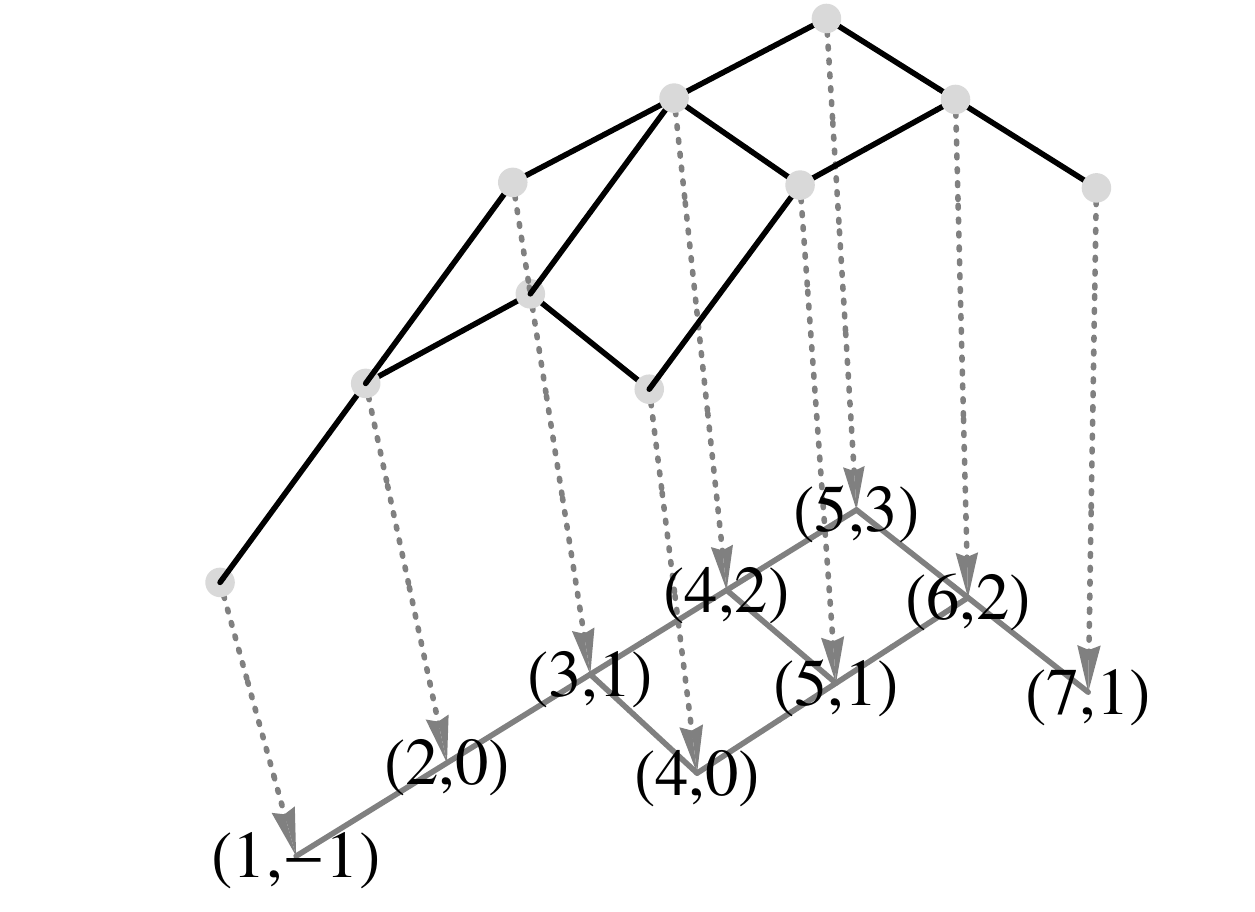}\end{gathered}$
\end{center}
\caption{$\tposet_4$ drawn as an \rcposet{} of height 2.  When there are two elements with the same position, the second element is raised; the position is indicated by a dotted arrow down.  Covering relations are drawn with solid black lines, and are projected down as solid gray lines.}
\label{ex:tsscpp4}
\end{figure}

Appealing once again to Theorem~\ref{thm:maintheorem}, we obtain the conjugacy of $\row$ and $\pro$.

\begin{corollary}
	There is an equivariant bijection between $J(\tposet_n)$ under $\row$ and under $\pro$.
\end{corollary}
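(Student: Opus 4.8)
The plan is to recognize the corollary as an immediate application of Theorem~\ref{thm:maintheorem}, exactly as in the parallel corollaries for $\aposet_n$, for $[\ell]\times[m]\times[n]$, and for $\Phi^+(D_n)$. That theorem produces an equivariant bijection between $J(\mathcal{R})$ under $\pro_\nu$ and under $\row_\omega$ for \emph{any} \rcposet{} $\mathcal{R}$ and any $\omega\in\mathfrak{S}_n$, $\nu\in\mathfrak{S}_k$; specializing to $\omega=12\ldots n$ and $\nu=k\ldots21$ gives precisely ordinary $\row=\row_{12\ldots n}$ and ordinary $\pro=\pro_{k\ldots21}$. Thus the one thing that genuinely requires verification is that $\tposet_n$, equipped with the position map $\pi(e_i-e_j)=(i+j+k,\,j-i+k)$ for $e_i-e_j\in\Phi^+(A_k)$ stated just above, actually satisfies the definition of an \rcposet{}.

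First I would check that $\pi$ lands in $\Pi$. Writing $\pi(e_i-e_j)=(a,b)$, we have $a-b=(i+j+k)-(j-i+k)=2i$, which is even, so $(a,b)$ lies in the integer span of $(2,0)$ and $(1,1)$, as required. Next I would confirm the covering-relation condition separately for the two types of covers in $\tposet_n$. For the covers internal to a fixed layer $\Phi^+(A_k)$, the element $e_i-e_j$ covers $e_{i+1}-e_j$ and $e_i-e_{j-1}$; comparing positions shows the first covered element sits at $\pi(e_i-e_j)+(1,-1)$ and the second at $\pi(e_i-e_j)+(-1,-1)$, both of the permitted diagonal forms. For the interlayer covers of Definition~\ref{def:tsscppposet}, where $e_j-e_k$ in $\Phi^+(A_{i+1})$ covers its copy in $\Phi^+(A_i)$, the positions $(j+k+i+1,\,k-j+i+1)$ and $(j+k+i,\,k-j+i)$ differ by $(-1,-1)$, again permitted. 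Since every cover is diagonally adjacent, $\tposet_n$ is an \rcposet{} of the claimed height $\lfloor n/2\rfloor$.

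With this structural fact in hand, the corollary follows at once by applying Theorem~\ref{thm:maintheorem} to $\mathcal{R}=\tposet_n$. There is no substantive obstacle here: the content has already been carried by the construction of $\tposet_n$ in Definition~\ref{def:tsscppposet} and by the main theorem, so the only work is the index bookkeeping in the covering-relation check---in particular keeping straight which covered element lands down-and-to-the-left versus down-and-to-the-right. This is routine once the explicit formula for $\pi$ is in place, and I would expect it to be the most error-prone rather than the most conceptually demanding part of the argument.
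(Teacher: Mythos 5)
Your proposal is correct and takes essentially the same route as the paper: the corollary is obtained by a direct appeal to Theorem~\ref{thm:maintheorem} once $\tposet_n$ is realized as an \rcposet{} via the stated position map. The paper asserts that realization without spelling out the covering-relation check, so your explicit verification (which is accurate) merely fills in a detail the paper leaves implicit.
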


\begin{theorem}
\label{thm:tsscpporder}
	$J(\tposet_n)$ under $\row$ has order divisible by $3n-2$.
\end{theorem}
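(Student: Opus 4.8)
The plan is to pass from $\row$ to $\pro$ and then analyze the single orbit of the empty order ideal. By the corollary immediately preceding this theorem (an instance of Theorem~\ref{thm:maintheorem}), $J(\tposet_n)$ under $\row$ is in equivariant bijection with $J(\tposet_n)$ under $\pro$, so $\row$ and $\pro$ have the same order. Since the order of a permutation is the least common multiple of its cycle lengths, it is enough to exhibit a single $\pro$-orbit of length exactly $3n-2$; I will take the orbit of the empty order ideal $\emptyset$.

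To control this orbit I would encode order ideals of $\tposet_n$ by their boundary paths, one path per layer $\Phi^+(A_k)$ ($1\le k\le n-1$), stacked and offset into a matrix in the spirit of Definition~\ref{def:boundmx}, and use the column description $\pro=\pro_{k\ldots 21}=c_k\cdots c_1$: as in the discussion preceding Theorem~\ref{thm:genpporder}, promotion scans the columns of the rc-poset from left to right, so that a block of empty columns at the front of the encoding is cycled to the end. The key numerical input is that the maximal column of $\tposet_n$ is $3n-2$: it is attained by the simple root $e_{n-1}-e_n$ at the bottom of the largest layer $\Phi^+(A_{n-1})$, whose position is $\pi(e_{n-1}-e_n)=(i+j+k,\,j-i+k)=(3n-2,\,n)$. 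Following the template of the proofs of Theorems~\ref{thm:genpporder} and~\ref{thm:asmorder}, I would show that $\pro$ first carries $\emptyset$ to the full order ideal (all of $\tposet_n$) after $a$ steps and then returns the full ideal to $\emptyset$ after $b$ further steps, with $a+b=3n-2$; concretely this amounts to verifying $\pro^{\,3n-2}(\emptyset)=\emptyset$ by tracking how the empty columns are cycled through the nested staircases.

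The main obstacle is to prove that the orbit length is exactly $3n-2$ rather than a proper divisor, i.e.\ that the ideals $\emptyset,\pro(\emptyset),\ldots,\pro^{\,3n-2}(\emptyset)=\emptyset$ are pairwise distinct over one period; only then does $(3n-2)\mid\mathrm{ord}(\pro)=\mathrm{ord}(\row)$ follow. The natural device is a monovariant: the cardinality $|I|$ should increase strictly along the phase $\emptyset\to\text{full}$ and decrease strictly along $\text{full}\to\emptyset$, and pairing this with a feature separating the two phases (such as which columns of the encoding are empty) rules out an early return. What makes the bookkeeping delicate here---more so than for the product of three chains in Theorem~\ref{thm:genpporder}---is that $\tposet_n$ is a \emph{nested} layering of staircases $\Phi^+(A_k)$ whose widths $2k-1$ grow with $k$ and whose left endpoints shift from layer to layer, with layers linked by the inter-layer covering relations of Definition~\ref{def:tsscppposet}. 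Verifying that promotion threads the empty columns cleanly through this shape, respecting those inter-layer relations and producing a genuinely new ideal at each of the $3n-2$ steps, is where the real work lies.
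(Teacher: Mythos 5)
Your high-level strategy coincides with the paper's: both reduce the theorem to showing that the orbit of the empty order ideal has length exactly $3n-2$. But your writeup stops precisely where the content of the proof begins. The paper's proof consists of explicitly naming the ideals that the empty ideal passes through: one application of $\row$ gives the full ideal; $n-1$ further applications give the ideal whose restriction to $\Phi^+(A_k)$ is $\{e_i-e_j : j-i\leq n-1-k\}$; $n-1$ more give that ideal together with the roots $e_i-e_j$ with $i$ even and $j-i=n-k$ (for $k>\frac{n}{2}$); and $n-1$ more return to $\emptyset$, for a total of $1+3(n-1)=3n-2$ steps. You acknowledge that "the real work lies" in this verification but do not supply it, so as it stands the proposal is an outline with the decisive step missing. (Incidentally, the paper works directly with $\row$ here rather than conjugating to $\pro$ first; your reduction to $\pro$ is legitimate but unnecessary.)

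Moreover, the specific devices you propose for filling the gap would not succeed. First, the cardinality of the ideal is \emph{not} a monovariant along the orbit: already for $n=3$ the seven ideals in the unique orbit (Figure~\ref{ex:tsscpp3}) have sizes $0,2,2,1,1,3,4$ in cyclic order, so there is no phase on which $|I|$ increases strictly to the full ideal and then decreases strictly back; the actual orbit jumps between $\emptyset$ and the full ideal in a \emph{single} step and then winds through two intermediate ideals over three phases of length $n-1$. Second, the "key numerical input" that the maximal column index is $3n-2$ is an artifact of the untranslated position map: the \rcposet{} $\tposet_n$ occupies columns $4$ through $3n-2$, hence has $3n-5$ columns, and unlike the $[\ell]\times[m]\times[n]$ case of Theorem~\ref{thm:genpporder} (where the boundary path matrix has exactly $m+n+\ell-1$ columns and promotion visibly cycles the empty columns to the end) the orbit length here is not the number of columns. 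The layers of $\tposet_n$ are staircases of growing width joined by single-cover relations $e_j-e_k\in\Phi^+(A_i)\lessdot e_j-e_k\in\Phi^+(A_{i+1})$, so the clean column-cycling picture does not transfer; one really does have to track the explicit intermediate ideals as the paper does. You are right that exactness of the orbit length (not merely $\pro^{3n-2}(\emptyset)=\emptyset$) is needed for the divisibility conclusion, and that point deserves the explicit description of the distinct milestone ideals rather than a nonexistent monovariant.
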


\begin{proof}
	We show that $\row^{3n-2}$ applied to the empty order ideal is the identity.

	Applying $\row$ to the empty order ideal gives the full order ideal.  Applying $\row^{n-1}$ to this gives the order ideal such that the restriction to the $\Phi^+(A_{k})$ in $\tposet_n$ is the order ideal containing the elements $e_i-e_j$, for $i<j$ and $j-i\leq n-1-k$.  Applying $\row^{n-1}$ again returns this same order ideal, with the additional elements $e_i-e_j$ with $i$ even and $j-i=n-k$ in $\Phi^+(A_{k})$ (for $k>\frac{n}{2}$).  Finally, $\row^{n-1}$ takes us back to the empty order ideal.
\end{proof}

In analogy with FPLs and ASMs, we again expect a bijection from TSSCPPs to a noncrossing combinatorial object with $3n-2$ external vertices, such that promotion corresponds to rotation of those vertices.

Since the order of $\pro$ or $\row$ on $J(\tposet_n)$ and the order of $\spro$ on $J(\aposet_n)$ are related, one could hope to use the method of~\cite{Armstrong2011} to define a bijection from ASMs to TSSCPPs.  This method would inductively associate elements of $J(\aposet_n)$ that are naturally elements of $J(\aposet_{n-1})$, and then use the orbit structure to extend the bijection.  Unlike the situation for positive root posets~\cite{Armstrong2011}, however, it is not the case that every orbit contains such an element.  This method therefore fails---though for small $n$ it can be used to gather data when trying to find a bijection. See Figure~\ref{ex:asmpptable} for data on the orbit sizes of these actions.

\begin{figure}[ht]
\begin{center}
\begin{tabular}{|l||l|l||l|l|}
\hline
 & \multicolumn{2}{|c||}{$J(\aposet_n)$ under $\spro$} &
\multicolumn{2}{|c|}{$J(\tposet_n)$ under $\pro$ or $\row$}  \\ \hline
 \cline{1-5}
 & Orbit Size & Number of Orbits & Orbit Size & Number of Orbits \\ \hline
\multirow{1}{*}{$n=1$}
 & 1 & 1 & 1 & 1\\ \hline
\multirow{1}{*}{$n=2$}
 & 2 & 1 & 2 & 1 \\ \hline
\multirow{1}{*}{$n=3$}
 & 7 & 1 & 7 & 1 \\ \hline
\multirow{3}{*}{$n=4$}
 & 10 & 3 & 10 & 3 \\
 & 5 & 2 & 5 & 2 \\
 & 2 & 1 & 2 & 1 \\ \hline
\multirow{3}{*}{$n=5$}
 &  &  & 39 & 1 \\
 &  & & 26 & 1 \\
 & 13 & 33 & 13 & 28\\ \hline
\multirow{8}{*}{$n=6$}
 & & & 112 & 1\\
 & & & 96 & 2\\
 & & & 80 & 2\\
 & & & 64 & 5\\
 & & & 48 & 23\\
 & & & 32 & 30\\
 & & & 24 & 2\\
 & 16 & 456 & 16 & 277 \\
 & 8 & 16 & 8 & 13\\
 & 4 & 2 &  &  \\
 & 2 & 2 & 2 & 2 \\ \hline
\multirow{2}{*}{$n=7$}
 & 57 & 55 &  &  \\
 & 19 & 11327 & *  & * \\ \hline
\end{tabular}
\end{center}
\caption{Data for the orbits of $J(\aposet_n)$ under superpromotion and the orbits of $J(\tposet_n)$ under promotion or rowmotion. The $n=7$ TSSCPP data is omitted for space considerations.}
\label{ex:asmpptable}
\end{figure}

\section{Acknowledgments}

The authors thank Vic Reiner, Dennis Stanton, and Dennis White for many helpful conversations and suggestions, and Peter Webb for his insights regarding toggle groups.  They are particularly grateful to David B Rush and Xiaolin Shi for their work on plane partitions, to Vic Reiner for introducing them to rowmotion, and to the anonymous reviewers for their many helpful comments.

\bibliographystyle{amsplain}
\bibliography{proequalsrow2}

\providecommand{\bysame}{\leavevmode\hbox to3em{\hrulefill}\thinspace}
\providecommand{\MR}{\relax\ifhmode\unskip\space\fi MR }
\providecommand{\MRhref}[2]{%
  \href{http://www.ams.org/mathscinet-getitem?mr=#1}{#2}
}
\providecommand{\href}[2]{#2}
\begin{thebibliography}{10}

\bibitem{Armstrong2011}
D.~Armstrong, C.~Stump, and H.~Thomas, \emph{A uniform bijection between
  nonnesting and noncrossing partitions}, Arxiv preprint
  arXiv:1101.1277v2[math.CO] (2011).

\bibitem{athanasiadis2004generalized}
C.~Athanasiadis, \emph{Generalized catalan numbers, weyl groups and
  arrangements of hyperplanes}, Bulletin of the London Mathematical Society
  \textbf{36} (2004), no.~3, 294--302.

\bibitem{athanasiadis2005noncrossing}
C.A. Athanasiadis and V.~Reiner, \emph{Noncrossing partitions for the group
  dn}, SIAM Journal on Discrete Mathematics \textbf{18} (2005), no.~2,
  397--417.

\bibitem{bessis701792cyclic}
D.~Bessis and V.~Reiner, \emph{Cyclic sieving of noncrossing partitions for
  complex reflection groups}, Annals of Combinatorics (2007), 1--26.

\bibitem{brouwerschrijver}
A.~Brouwer and A.~Schrijver, \emph{On the period of an operator, defined on
  antichains}, Math Centrum report ZW \textbf{24/74} (1974).

\bibitem{cameron1995orbits}
P.~Cameron and D.~Fon-Der-Flaass, \emph{Orbits of antichains revisited},
  European J. Combin. \textbf{16} (1995), no.~6, 545--554.

\bibitem{RAZSTROGpf}
L.~Cantini and A.~Sportiello, \emph{Proof of the razumov-stroganov conjecture},
  Arxiv preprint arXiv:1003.3376v1 [math.CO] (2010).

\bibitem{cellini2002ad}
P.~Cellini and P.~Papi, \emph{ad-nilpotent ideals of a borel subalgebra ii},
  Journal of Algebra \textbf{258} (2002), no.~1, 112--121.

\bibitem{deza1990loops}
M.~Deza and K.~Fukuda, \emph{Loops of clutters}, Institute for Mathematics and
  Its Applications \textbf{20} (1990), 72--92.

\bibitem{duchet1974hypergraphes}
P.~Duchet, \emph{Sur les hypergraphes invariants}, Discrete Math. \textbf{8}
  (1974), no.~3, 269--280.

\bibitem{ELKP_DOMINO1}
N.~Elkies, G.~Kuperberg, M.~Larsen, and J.~Propp, \emph{Alternating-sign
  matrices and domino tilings. {I}}, J. Algebraic Combin. \textbf{1} (1992),
  no.~2, 111--132.

\bibitem{fon1993orbits}
D.~Fon-Der-Flaass, \emph{Orbits of antichains in ranked posets}, European J.
  Combin. \textbf{14} (1993), no.~1, 17--22.

\bibitem{haiman1994conjectures}
M.~Haiman, \emph{Conjectures on the quotient ring by diagonal invariants},
  Journal of Algebraic Combinatorics \textbf{3} (1994), no.~1, 17--76.

\bibitem{haiman1992characterization}
M.~Haiman and D.~Kim, \emph{A characterization of generalized staircases},
  Discrete Math. \textbf{99} (1992), no.~1-3, 115--122.

\bibitem{humphreys1992reflection}
P.~Hoffman and J.~Humphreys, \emph{Projective representations of the symmetric
  groups}, Oxford Mathematical Monographs, The Clarendon Press Oxford
  University Press, New York, 1992, $Q$-functions and shifted tableaux, Oxford
  Science Publications.

\bibitem{KUP_ASM_CONJ}
G.~Kuperberg, \emph{Another proof of the alternating-sign matrix conjecture},
  Internat. Math. Res. Notices (1996), no.~3, 139--150.

\bibitem{TREILLIS}
A.~Lascoux and M.~Sch{\"u}tzenberger, \emph{Treillis et bases des groupes de
  {C}oxeter}, Electron. J. Combin. \textbf{3} (1996), no.~2.

\bibitem{Macmahon}
P.~MacMahon, \emph{Combinatory analysis}, Cambridge University Press,
  1915-1916, 2 vols.

\bibitem{MRRASMDPP}
W.~Mills, D.~Robbins, and H.~Rumsey, Jr., \emph{Alternating sign matrices and
  descending plane partitions}, J. Combin. Theory Ser. A \textbf{34} (1983),
  no.~3, 340--359.

\bibitem{panyushev2009orbits}
D.~Panyushev, \emph{On orbits of antichains of positive roots}, European J.
  Combin. \textbf{30} (2009), no.~2, 586--594.

\bibitem{petersen2009promotion}
T.~Petersen, P.~Pylyavskyy, and B.~Rhoades, \emph{Promotion and cyclic sieving
  via webs}, J. Algebraic Combin. \textbf{30} (2009), no.~1, 19--41.

\bibitem{propp2002many}
J.~Propp, \emph{The many faces of alternating-sign matrices}, Arxiv preprint
  arXiv:math/0208125 (2002).

\bibitem{reiner2004cyclic}
V.~Reiner, D.~Stanton, and D.~White, \emph{The cyclic sieving phenomenon}, J.
  Combin. Theory Ser. A \textbf{108} (2004), no.~1, 17--50.

\bibitem{rhoades2010cyclic}
B.~Rhoades, \emph{Cyclic sieving, promotion, and representation theory}, J.
  Combin. Theory Ser. A \textbf{117} (2010), no.~1, 38--76.

\bibitem{RushShiREU}
D.~B. Rush and X.~Shi, \emph{On orbits of order ideals of minuscule posets},
  Arxiv preprint arXiv:1108.5245 [math.CO] (2011).

\bibitem{sagan2010cyclic}
B.E. Sagan, \emph{The cyclic sieving phenomenon: a survey}, Arxiv preprint
  arXiv:1008.0790 (2010).

\bibitem{schutzenberger1972promotion}
M.-P. Sch\"{u}tzenberger, \emph{{Promotion des morphismes d'ensembles
  ordonn{\'e}s}}, Discrete Math. \textbf{2} (1972), no.~1, 73--94.

\bibitem{stanley2009promotion}
R.~Stanley, \emph{Promotion and evacuation}, Electron. J. Combin. \textbf{16}
  (2009), no.~2, R9.

\bibitem{REUpaper}
N.~Stephens-Davidowitz and A.~Cloninger, \emph{The cyclic sieving phenomenon on
  the alternating sign matrices},
  \url{http://www.math.umn.edu/~reiner/REU/CloningerDavidowitz2007.pdf}, 2007.

\bibitem{STRIKERDPP}
J.~Striker, \emph{A direct bijection between permutation matrices and
  descending plane partitions with no special parts}, Discrete Math.
  \textbf{311} (2011), no.~21, 2581--2585.

\bibitem{StrikerPoset}
\bysame, \emph{A unifying poset perspective on alternating sign matrices, plane
  partitions, catalan objects, tournaments, and tableaux}, Adv. Appl. Math.
  \textbf{46} (2011), no.~1-4, 583--609.

\bibitem{Wieland2000}
B.~Wieland, \emph{A large dihedral symmetry of the set of alternating sign
  matrices}, Electron. J. Combin. \textbf{7} (2000), Research Paper 37, 13 pp.
  (electronic).

\bibitem{Williams2008}
N.~Williams, \emph{An alternating sum of alternating sign matrices},
  Rose-Hulman Und. Math J. \textbf{9} (2008), no.~2, Research Paper 10, 6 pp.
  (electronic).

\bibitem{ZEILASM}
D.~Zeilberger, \emph{Proof of the alternating sign matrix conjecture},
  Electron. J. Combin. \textbf{3} (1996), no.~2, Research Paper 13, approx.\ 84
  pp.\ (electronic), The Foata Festschrift.

\end{thebibliography}

\end{document}